\documentclass{amsart}
\usepackage{graphicx} 
\usepackage{amsmath,amsthm,amsfonts}
\usepackage{amssymb} 
\newtheorem{theorem}{Theorem}
\newtheorem{lemma}[theorem]{Lemma}
\newtheorem{proposition}[theorem]{Proposition}
\newtheorem{corollary}[theorem]{Corollary}
\theoremstyle{definition}
\newtheorem{definition}{Definition}

\theoremstyle{remark}
\newtheorem*{remark}{Remark}
\usepackage{color}
\usepackage{hyperref}

\title{Graded identities of the first Weyl algebra and its generalizations}
\author[V. Futorny]{Vyacheslav Futorny}
\address{Shenzhen International Center for Mathematics, Southern University of Science and Technology, Shenzhen, Guangdong, P. R. China}
\email{vfutorny@gmail.com}
\thanks{V. Futorny was partially supported by  the NSF of China (12350710787 and 12350710178)}
\author[P. Koshlukov]{Plamen Koshlukov}
\address{651 Sergio Buarque de Holanda, Department of Mathematics, State University of Campinas, 13083-859 Campinas, SP, Brazil}
\email{plamen@unicamp.br}
\thanks{P. Koshlukov was partially supported by FAPESP Grant 2024/14914-9, and CNPq Grant 307184/2023-4}
\author[J. Schwarz]{Jo\~ao Fernando Schwarz}
\address{Shenzhen International Center for Mathematics, Southern University of Science and Technology, Shenzhen, Guangdong, P. R. China}
\email{jfschwarz.0791@gmail.com}
\date{}

\subjclass[2020]{16R10, 16R40, 16S30, 16S32, 16W50}

\keywords{Weyl algebra; group gradings on algebras; graded identities; basis of identities; PI equivalence, Galois rings}
\begin{document}

\begin{abstract}
We study the graded polynomial identities of the first Weyl algebra $W_1$ over an infinite field. The algebra $W_1$ satisfies no ordinary polynomial identities in characteristic 0. It admits a natural grading by the infinite cyclic group $\mathbb{Z}$. We construct a basis of the $\mathbb{Z}$-graded identities of $W_1$, which consists of a single identity. It expresses the fact that the degree 0 component in the grading is commutative. It is also well known that if  the characteristic of the base field is  $p>2$, then $W_1$ satisfies the same identities as the full matrix algebra of order $p$. In this situation, we describe the $\mathbb{Z}_p$-graded identities of $W_1$. Afterwards, using various combinatorial and algebraic tools we consider graded identities for various types of algebras generalizing the Weyl algebras. For example, we show that $\mathbb{Z}$-graded Galois rings in characteristic 0 satisfy the same graded identities as $W_1$ when they embed in a shift operator algebra $\mathcal{S}_1$, and as a consequence we obtain that these $\mathbb{Z}$-graded Galois rings are not PI. The same holds for the algebra of differential operators on $1$-dimensional torus. We obtain similar results for generalized Weyl algebras. We also deal with the graded identities for the quantum Weyl algebras and for the quantum plane. It turns out that in the latter case and when $q$ is the $\ell$-th primitive root of unity, one is led to study gradings by the group $\mathbb{Z}_\ell\times \mathbb{Z}_\ell$. In this case the quantum plane satisfies the same graded identities as the matrix algebra of order $\ell$. Finally we construct a natural $\mathbb{Z}$-grading on the universal enveloping algebra of $\mathfrak{sl}_2$, and prove that its $\mathbb{Z}$-graded identities are the same as those of $W_1$, in characteristic $0$.

\end{abstract}

\maketitle

\section{Introduction}
Let $K$ be an infinite field and Let $W_1$ be the first Weyl algebra over $K$. It is unital, and is generated by two elements $x$ and $y$ satisfying the relation $[y,x]=yx-xy=1$. It is well known that in characteristic 0 the algebra $W_1$ is a noncommutative noetherian domain and it is a central simple algebra. A well known realization of $W_1$ is the following. Consider the polynomial ring in one variable $K[t]$, it is an infinite dimensional vector space over $K$. Then $W_1$ is the algebra of linear transformations on $K[t]$ generated by $x$ which is the multiplication by $t$, $f(t)\mapsto tf(t)$, and $y$ is the derivation $f(t)\mapsto \displaystyle\frac{df} {dt}$. Observe that for such a realization one needs a field of characteristic 0, otherwise if the characteristic of the field is $p>0$ then $y^p\mapsto 0$. 

The Weyl algebras and their generalizations are of extreme importance in Algebra and also in Theoretical Physics. They bear resemblance with enveloping algebras of the Witt algebra. We note that the universal enveloping algebra of the first Witt algebra is a  domain but it is not Noetherian, though, see \cite{sierra}. We direct the readers to 

Denote $h=yx$, then the Weyl algebra is a free module over the polynomial algebra $K[h]$ with a basis consisting of 1 and $\{x^m, y^n\mid n\in\mathbb{N}\}$. It is well known and easy to check that if $f(h)\in K[h]$ then $x^mf(h) = f(h-m)x^m$ and $y^nf(h) = f(h+n)y^n$ for every $m$ and $n$. For $k\in\mathbb{Z}$ we put $z^0=1$, $z^k=x^k$ whenever $k>0$, and $z^k$=$y^{-k}$ if $k<0$, then we have $z^m z^n=g(h) z^{m+n}$ for some polynomial $g\in K[h]$. In fact it can be easily computed directly but we do not exhibit it as we will not need its specific form. 

Let $G$ be a group and $A$ an algebra (associative or whatever). If $A=\oplus_{g\in G} A_g$ where the $A_g$ are subspaces of the vector space $A$ such that $A_gA_h\subseteq A_{gh}$ for every $g$, $h\in G$ then $A$ is called a $G$-graded algebra. 
\begin{lemma}
\label{AisZgraded}
The algebra $A=W_1$ is $\mathbb{Z}$-graded where $A_n = K[h]z^n$ for every $n\in\mathbb{Z}$. 
\end{lemma}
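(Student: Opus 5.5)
The cleanest route is to exhibit the grading as the eigenspace decomposition of a diagonalizable derivation, which gives the direct sum and multiplicativity together. Define on generators $\delta(x)=x$ and $\delta(y)=-y$. Since $\delta([y,x]-1) = [\delta y, x]+[y,\delta x] = -[y,x]+[y,x]=0$, the derivation of the free algebra $K\langle x,y\rangle$ given by these values kills the ideal generated by $[y,x]-1$. Hence it descends to a derivation $\delta$ of $W_1$. This works in any characteristic, and we need no faithful representation. Set $W^{(n)}=\{a\in W_1:\delta(a)=na\}$.

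From the Leibniz rule, $W^{(m)}W^{(n)}\subseteq W^{(m+n)}$. We also have $\delta(h)=\delta(y)x+y\delta(x)=0$, so $\delta(f(h))=0$ for every $f\in K[h]$. Moreover $\delta(x^m)=mx^m$ and $\delta(y^n)=-ny^n$. Thus $K[h]z^n\subseteq W^{(n)}$ for all $n\in\mathbb{Z}$.

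The sum $A=\sum_n K[h]z^n$ is direct and equals $A$ because, as stated in the introduction, $W_1$ is a free $K[h]$-module with basis $1$ together with $\{x^m,y^n\}$. In characteristic $0$ the multiplicativity $A_mA_n\subseteq A_{m+n}$ then follows from $W^{(m)}W^{(n)}\subseteq W^{(m+n)}$. Indeed, $W_1=\bigoplus_n K[h]z^n$ with $K[h]z^n\subseteq W^{(n)}$, and the eigenspaces $W^{(n)}$ for distinct integers $n$ are independent. It follows that $W^{(n)}=K[h]z^n$ exactly.

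The only obstacle is positive characteristic $p$. There the eigenvalues $n$ and $n+p$ coincide, so the eigenspaces no longer separate the components. In that case I would bypass $\delta$ and check $A_mA_n\subseteq A_{m+n}$ directly. Write $f(h)z^m\cdot g(h)z^n = f(h)g(h-m)z^mz^n$, using the commutation rules $x^mg(h)=g(h-m)x^m$ and $y^ng(h)=g(h+n)y^n$ quoted above. Then the formula $z^mz^n=c(h)z^{m+n}$ with $c\in K[h]$, also quoted above, finishes the argument. Both formulas are characteristic-free, since they follow by induction from $xh=(h-1)x$ and $yh=(h+1)y$, which come from $yx-xy=1$. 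This gives the grading uniformly in all characteristics. The freeness of the $K[h]$-basis is taken as stated in the paper. In characteristic $p$ it also follows from the PBW basis $\{x^iy^j\}$ of $W_1$, since $y^jx^j$ is a monic polynomial of degree $j$ in $h$.
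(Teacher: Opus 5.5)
Your proof is correct. Your last paragraph uses the free $K[h]$-basis $1,\{x^m,y^n\}$ together with $z^mg(h)=g(h-m)z^m$ and $z^mz^n=c(h)z^{m+n}$. That is exactly what the paper means by ``obvious by the above remarks.'' Since it is valid in every characteristic, it proves the lemma on its own. One small point: $z^mz^n=c(h)z^{m+n}$ also needs $x^ky^k=\prod_{i=1}^{k}(h-i)$ and $y^kx^k=\prod_{i=0}^{k-1}(h+i)$, not only the two commutation rules, but these likewise follow from $yx-xy=1$.

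Your derivation argument is a genuinely different route in characteristic $0$. Your $\delta$ is simply $\operatorname{ad}(h)$, because $[h,x]=[y,x]x=x$ and $[h,y]=y[x,y]=-y$, so there is no need to pass through the free algebra. This route gives an intrinsic description $A_n=\{a\in W_1 : [h,a]=na\}$. It also gives $A_mA_n\subseteq A_{m+n}$ directly from the Leibniz rule, without computing $z^mz^n$. It does not remove the need for the free-module decomposition to get the direct sum. In characteristic $p$ it only detects $n$ modulo $p$: there the eigenspaces of $\operatorname{ad}(h)$ are exactly the components of the $\mathbb{Z}_p$-grading of Lemma~\ref{p-grading}, since $[h,x^ry^s]=(r-s)x^ry^s$.
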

\begin{proof}
Obvious by the above remarks. 
\end{proof}

The study of group gradings on algebras started long ago. Recall that the commutative polynomial ring is $\mathbb{Z}$-graded (by the degree), and this grading is widely used in Commutative Algebra. In the sixties, Wall classified the graded finite dimensional simple algebras assuming the grading group is $\mathbb{Z}_2$ and the field is algebraically closed, \cite{wall}. The complete classification of the group gradings on matrix algebras was obtained in \cite{bsz}. Gradings on algebras are of significant interest since they provide an additional structure on an algebra, which might make it ``easier" to study.

Let $X=\{x_1,x_2,\ldots\}$ be an infinite countable set, then the free associative algebra $K\langle X\rangle$ is the vector space with a basis consisting of 1 and all noncommutative monomials (words) in $X$. The multiplication is defined on the basis of elements by juxtaposition, and then extended by linearity. The elements of $K\langle X\rangle$ are called polynomials. The algebra $K\langle X\rangle$ has the following universal property. If $A$ is an associative $K$-algebra and $\varphi\colon X\to A$ is a map there exists a unique algebra homomorphism $\Phi\colon K\langle X\rangle\to A$ extending $\varphi$. 
\begin{definition}
A polynomial $f\in K\langle X\rangle$ is called a polynomial identity for the $K$-algebra $A$ whenever $f\in\ker\Phi$ for every homomorphism $\Phi\colon K\langle X\rangle\to A$. In other words, $f$ vanishes under every substitution of its variables by elements of $A$. 

The algebra $A$ is a PI algebra if there exists a nonzero $f\in K\langle X\rangle$ that is an identity for $A$. The set of all identities for $A$ forms an ideal $T(A)$ of $K\langle X\rangle$ which is called the T-ideal of $A$.
\end{definition}
Clearly $f=f(x_1,\ldots,x_n)$ is an identity for $A$ if and only if $f(a_1,\ldots, a_n)=0$ for every choice of $a_i\in A$. It is well known that $T(A)$ is closed under endomorphisms of $K\langle X\rangle$, that is why sometimes such ideals are called verbal ideals, in analogy with the case of groups. If $I$ is a T-ideal (that is $I$ is closed under endomorphisms) then $I=T(K\langle X\rangle/I)$, hence $I$ is the ideal of identities of an algebra.  

A monomial $m=x_1^{a_1}\cdots x_n^{a_n}\in K\langle X\rangle$ is of degree $\deg m=a_1+\cdots+a_n$. We can define a finer degree, the multidegree of $m$ which is the $n$-tuple $(a_1,\ldots, a_n)$. A polynomial $f$ is homogeneous if all of its monomials are of the same degree. It is multihomogeneous if $f$ is a linear combination of monomials of the same multidegree. Finally $f$ is multilinear whenever it is multihomogeneous of multidegree $(1,\ldots,1)$. In other words, $f(x_1,\ldots,x_n)$ is multilinear when it can be written as $f=\sum_{\sigma\in S_n} \alpha_\sigma x_{\sigma(1)}\cdots x_{\sigma(n)}$. Here $S_n$ stands for the symmetric group permuting $(1,\ldots,n)$, and $\alpha_\sigma\in K$. The following proposition is a basic fact in PI theory, see for example \cite[Proposition 4.2.3]{drensky}.
\begin{proposition}
\label{multihomoogeneous}
If $K$ is an infinite field then every T-ideal is generated, as a T-ideal, by its multihomogeneous elements. 

If, moreover, $K$ is of characteristic 0 then every T-ideal is generated by its multilinear elements. 
\end{proposition}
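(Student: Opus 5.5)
The plan is the standard Vandermonde argument for the first part and partial linearization for the second. Let $I$ be a T-ideal and $f=f(x_1,\ldots,x_n)\in I$. It suffices to show that every multihomogeneous component of $f$ lies in $I$, since $f$ is the sum of these components. Then the T-ideal generated by the multihomogeneous elements of $I$ contains every element of $I$, and it is clearly contained in $I$.

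Fix a variable, say $x_1$, and write $f=\sum_{k=0}^d f_k$, where $f_k$ is the sum of the monomials of $f$ of degree $k$ in $x_1$. Since $K$ is infinite, choose pairwise distinct scalars $\lambda_0,\ldots,\lambda_d\in K$. The substitution $x_1\mapsto\lambda_j x_1$, $x_i\mapsto x_i$ for $i>1$, is an endomorphism of $K\langle X\rangle$. Because $I$ is closed under endomorphisms, $f(\lambda_j x_1,x_2,\ldots,x_n)=\sum_k \lambda_j^k f_k\in I$ for every $j$. The Vandermonde matrix $(\lambda_j^k)$ is invertible, so each $f_k$ is a $K$-linear combination of elements of $I$, hence lies in $I$. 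We then repeat this for $x_2,\ldots,x_n$ applied to each component already obtained. After $n$ steps, every multihomogeneous component of $f$ lies in $I$. The only point needing care is that each component is again of the right form, and this is immediate since the substitutions preserve degrees in the other variables.

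For characteristic $0$, by the first part it suffices to show that a multihomogeneous $f\in I$ is a consequence of multilinear elements of $I$. We proceed by induction on the total excess $\sum(\deg_{x_i}f-1)$. Suppose $\deg_{x_1}f=d>1$. Substitute $x_1\mapsto x_1+x_{n+1}$, where $x_{n+1}$ is a new variable, and expand by degree in $x_{n+1}$. By the same Vandermonde argument, applied to $x_{n+1}$ after the substitution, the component $g$ of degree $1$ in $x_{n+1}$ lies in $I$. This $g$ has degree $d-1$ in $x_1$ and degree $1$ in $x_{n+1}$, so its excess is strictly smaller. By induction, $g$ lies in the T-ideal generated by the multilinear elements of $I$. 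Finally, substituting $x_{n+1}\mapsto x_1$ in $g$ gives $d\,f$, because each monomial of $f$ arises from exactly $d$ choices of which occurrence of $x_1$ is replaced. Since $d$ is invertible in characteristic $0$, $f$ is recovered.

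The step I expect to be the main obstacle is this last counting identity $g(x_1,\ldots,x_n,x_1)=d f$. It must be checked monomial by monomial, and it is exactly where characteristic $0$ is needed: in characteristic $p$ the factor $d$ may vanish when $p\mid d$. The remaining steps are routine bookkeeping.
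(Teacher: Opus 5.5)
Your proof is correct and is essentially the standard argument: the paper gives no proof of its own but cites Drensky's book, where the first part is proved by the same Vandermonde argument on the homogeneous components in each variable, and the second by the same partial linearization with the resulting scalar factor ($d$, or $\binom{d}{i}$) invertible in characteristic $0$. The only cosmetic point is that your excess $\sum(\deg_{x_i}f-1)$ should be summed over the variables that actually occur in $f$, so that excess $0$ means multilinear.
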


If the base field $K$ is of characteristic 0 then it is well known that the Weyl algebra $W_1$ does not satisfy polynomial identities: it suffices to observe that it is central simple and $\dim W_1=\infty$. On the other hand, if $K$ is an infinite field of characteristic $p>2$ then $W_1$ satisfies the same identities as the matrix algebra $M_p(K)$, see \cite{lopatin}. Recall that this fact can also be deduced from  \cite{belov} or \cite{tsuchimoto}. The research of the Weyl algebras in positive characteristic can be dated back to \cite{makar-limanov}, see also \cite{bavula_glasgow}. 

Graded polynomial identities have been object of extensive research since the fundamental work of Kemer \cite{kemer}. The $\mathbb{Z}_2$-graded identities were essential in Kemer's classification of the T-ideals in characteristic 0; this classification led Kemer to the positive solution of the well known problem proposed by W. Specht in the fifties: Is every T-ideal in characteristic 0 finitely generated? Since then there have been various results concerning graded identities. It turned out that the graded identities of an algebra may be easier to describe than the ordinary ones. Although if one knows the graded identities of a given algebra oner cannot say much about the ordinary ones, the former still provide plenty of information about the ordinary ones. Thus for example, the ordinary identities of $M_n(K)$ are known, even in characteristic 0, only for $n\le 2$ while the grade ones for several natural gradings have been obtained for every $n$. The matrix algebra $M_n(K)$ admits a grading by $\mathbb{Z}_n$ which is given as follows: the elementary matrices $e_{ij}$ are homogeneous in the grading, of degree $j-i$ modulo $n$. The graded identities for this grading were obtained by Vasilovsky \cite{vaszn} in characteristic 0, and by Azevedo \cite{azevedozn} in positive characteristic. Similarly, $M_n(K)$ is graded by $\mathbb{Z}$ assuming $e_{ij}$ is of degree $j-i$ for every $i$ and $j$. (In this grading, the homogeneous components of degree $\ge n$ and $\le -n$ are trivial.) The graded identities for such a grading are also known, see \cite{vasz, azevedoz}. 

Let us introduce the main objects in this direction. First we define the free graded algebra. Let $G$ be an arbitrary group, and let $X=\cup_{g\in G} X_g$ be a disjoint union of infinite countable sets. We form the free associative algebra $K\langle X\rangle$, and define a $G$-grading on it by setting all variables from $X_g$ to be of degree $g$. We write the elements of $X_g$ as $\{x_1^{g}, x_2^{g}, \ldots\}$. The degree $\deg_g m$ of the monomial $m=x_{i_1}^{g_1} \cdots x_{i_k}^{g_k}$ is the product $g_1\cdots g_k$. (If the group $G$ is additive then we substitute the product by the sum in the latter expression.) We denote the resulting $G$-graded algebra by $K\langle X\mid G\rangle$ when we want to stress that $G$ is the grading group. The definitions of a homogeneous, or graded, subspace, subalgebra, ideal, are the natural ones. If $A$ is a $G$-graded algebra then a polynomial $f\in K\langle X\mid G\rangle$ is a graded identity for $A$ if $f$ vanishes under every substitution of its variables by homogeneous elements of $A$, respecting the degrees. The notion of a graded T-ideal is defined as in the ordinary case. Recall that graded T-ideals are closed under graded endomorphisms. 

If $A$ is a $G$-graded algebra we denote by $T_G(A)$ the ideal of its graded identities in $K\langle X\mid G\rangle$. 

\begin{lemma}
\label{multihomgraded}
Let $K$ be an infinite field and let $A$ be a $G$-graded algebra with graded T-ideal $I=T_G(A)$. 

a) Then $I$ is generated, as a graded T-ideal, by its multihomogeneous elements.

b) If $K$ is of characteristic 0, then $I$ is generated by its multilinear elements. 
\end{lemma}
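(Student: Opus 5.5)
The plan is to adapt the standard Vandermonde argument for ordinary T-ideals (cf.\ \cite[Proposition 4.2.3]{drensky}, recalled above as Proposition~\ref{multihomoogeneous}). The only new point is that every substitution used must be a graded endomorphism of $K\langle X\mid G\rangle$, that is, it must send each $x_i^{g}$ to an element of degree $g$. For a), let $f\in I$ and fix one variable $x=x_i^{g}$ occurring in $f$. Write $f=\sum_{k=0}^{d} f_k$, where $f_k$ is the part of $f$ that is homogeneous of degree $k$ in $x$. For $\lambda\in K$, the map $x\mapsto \lambda x$, fixing all other variables, is a graded endomorphism, because $\lambda x$ is still homogeneous of degree $g$. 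Hence $f(\ldots,\lambda x,\ldots)=\sum_k \lambda^k f_k$ lies in $I$.

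Since $K$ is infinite, choose distinct $\lambda_0,\ldots,\lambda_d$. The Vandermonde matrix $(\lambda_j^k)$ is invertible, so each $f_k$ is a $K$-linear combination of elements of $I$ and therefore lies in $I$. Iterating over all variables of $f$ shows that every multihomogeneous component of $f$ lies in $I$. Consequently $I$ is generated, even as a vector space and hence as a graded T-ideal, by its multihomogeneous elements.

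For b), it suffices to show that each multihomogeneous $f\in I$ lies in the graded T-ideal generated by multilinear elements of $I$. I will use partial linearization. Suppose $f$ has degree $d>1$ in $x=x_i^{g}$. Pick a new variable $x'=x_j^{g}$ from the same set $X_g$; this is possible because $X_g$ is infinite. Substituting $x\mapsto x+x'$ is a graded endomorphism, since $x+x'$ is homogeneous of degree $g$, so $f(\ldots,x+x',\ldots)\in I$. By part a), its component $h$ of degree $d-1$ in $x$ and degree $1$ in $x'$ also lies in $I$. Conversely, substituting $x'\mapsto x$ in $h$, again a graded endomorphism, returns $d\,f$. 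Since $\operatorname{char}K=0$, we can divide by $d$, so $f$ lies in the graded T-ideal generated by $h$. Now $h$ has the same total degree as $f$ but strictly smaller degree in $x$. Inducting on the sum of the excess degrees $\sum(\deg_{x_i}f-1)$, we eventually reach a multilinear element of $I$, and $f$ is recovered from it by a graded substitution.

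The main obstacle is only bookkeeping: every substitution must preserve the grading. This works because linearization pairs $x$ only with a fresh variable of the same homogeneous degree $g$, and rescaling does not change the degree. Nothing about $A$ is used beyond $I=T_G(A)$ being closed under graded endomorphisms, and even a) uses only that $I$ is a subspace closed under them.
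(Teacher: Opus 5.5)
Your proof is correct and follows the same route as the paper, whose proof simply says that the ordinary-case argument (Proposition~\ref{multihomoogeneous}: a Vandermonde argument for multihomogeneous components, then partial linearization in characteristic $0$) carries over word for word. You supply the one point that actually needs checking, namely that $x\mapsto\lambda x$, $x\mapsto x+x'$ with a fresh $x'\in X_g$, and $x'\mapsto x$ are all graded endomorphisms, and your induction on the total excess degree is set up correctly.
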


\begin{proof}
The proof follows word-by-word that of the ordinary case, see Proposition \ref{multihomoogeneous}. 
\end{proof}
We observe that a multilinear graded polynomial in the variables $x_1^{a_1}$, \dots, $x_n^{n}$ is of the form 
\[
\sum_{\sigma\in S_n}  \alpha_\sigma x_{\sigma(1)}^{a_{\sigma(1)}} \cdots x_{\sigma(n)}^{a_{\sigma(n)}}, \quad \alpha_\sigma\in K.
\]
In recent years there has been significant interest in studying the graded identities on infinite dimensional algebras. The results obtained so far are mainly concerned with Lie algebras graded by the group $\mathbb{Z}$, see for example \cite{cfpk1}, \cite{cfpk2}, and the references therein. 

In this paper we study the graded identities of the first Weyl algebra $A=W_1$. We prove that, in characteristic 0, the ideal of its graded identities is generated by a single graded identity. Later on we deal with the case of an infinite field of characteristic $p>0$, when the grading group is $\mathbb{Z}_p$. Finally, we extend these results to some important generalized Weyl algebras such as the quantum plane, the first quantum Weyl algebra, and $U(\mathfrak{sl}_2)$ using a $\mathbb{Z}$-graded version of the notion of a Galois ring (cf. \cite{futornyovsienko}).

\section{Graded identities in characteristic 0}\label{sec:graded-identities-weyl-char-0}
Throughout this section, we assume $K$ is a fixed field of characteristic 0, and we assume that the Weyl algebra $A=W_1$ is $\mathbb{Z}$-graded  as in Lemma~\ref{AisZgraded}. 

\begin{lemma}
\label{basicidentity}
The Weyl algebra $W_1$ satisfies the graded identity $[x_1^0, x_2^0]= x_1^0x_2^0-x_2^0x_1^0=0$. 
\end{lemma}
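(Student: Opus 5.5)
The plan is to show that the degree zero component $A_0$ of the grading in Lemma~\ref{AisZgraded} is a commutative subalgebra of $W_1$. A graded substitution sends $x_1^0$ and $x_2^0$ to homogeneous elements of degree $0$, that is, to arbitrary elements of $A_0$. Hence the identity $[x_1^0,x_2^0]=0$ holds exactly when any two elements of $A_0$ commute.

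By Lemma~\ref{AisZgraded} we have $A_0=K[h]z^0=K[h]$, where $h=yx$. So $A_0$ is the subalgebra of $W_1$ generated by the single element $h$ together with $1$. A subalgebra generated by one element is a homomorphic image of the polynomial ring $K[t]$, via $t\mapsto h$, and is therefore commutative. Concretely, for $f,g\in K[t]$ we have $f(h)g(h)=(fg)(h)=(gf)(h)=g(h)f(h)$, so $[f(h),g(h)]=0$.

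There is no real obstacle here. The only point to check is that $A_0$ is exactly $K[h]$ and not something larger. This is the content of the decomposition $W_1=\bigoplus_{n\in\mathbb{Z}}K[h]z^n$: $W_1$ is a free $K[h]$-module with basis $\{z^n\}$, and $z^0=1$.

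If one wants to confirm directly that nothing else has degree $0$, the argument runs through the PBW basis $\{x^iy^j\}$. Assign $x$ degree $1$ and $y$ degree $-1$. Each monomial $x^iy^j$ is then homogeneous of degree $i-j$. The degree $0$ monomials are $x^iy^i$, and each of these is a polynomial in $h$, by induction using $yx=h$ and $x^mf(h)=f(h-m)x^m$. Either way, the identity follows.
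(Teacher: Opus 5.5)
Your proof is correct and matches the paper's argument: elements of $A_0$ are polynomials $p(h)$ in the single element $h=yx$, so any two of them commute. Your extra check via the basis $\{x^iy^j\}$, showing that $A_0$ is exactly $K[h]$, just makes explicit what the paper takes from Lemma~\ref{AisZgraded}.
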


\begin{proof}
The proof is immediate: if $a_1$, $a_2\in A_0$ then $a_1=p_1(h)$, and $a_2=p_2(h)$, therefore $a_1$ and $a_2$ commute. 
\end{proof}

\begin{remark}
According to a well known theorem of Bergen and Cohen \cite{bergen} if $A$ is $G$-graded and if the neutral component in the grading satisfies an identity, that is there is a graded identity in variables of degree 0 only, then $A$ satisfies an ordinary identity whenever $G$ is a finite group. Our group is infinite hence the above lemma does not contradict Bergen and Cohen's theorem.
\end{remark}

We observe that by Lemma \ref{AisZgraded}, and the remarks preceding it, it follows that every element of $A_n$ can be written uniquely as $p(h)z^n$ for some polynomial $p\in K[h]$. 

\begin{lemma}
\label{canonicalform}
Let $M=x_1^{a_1} \cdots x_n^{a_n}$ be a graded monomial in $K\langle X\mid \mathbb{Z}\rangle$.  Substituting $x_i^{a_i}= p_i(h) z^{a_i} $ in $W_1$ we obtain \[
M= p_1(h)p_2(h-a_1) p_3(h-a_1-a_2) \cdots p_n(h-a_1-\cdots-a_{n-1}) z^{a_1+a_2+\cdots+a_n}.
\]
\end{lemma}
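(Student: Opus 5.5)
The plan is to prove the formula by induction on $n$, the number of factors in $M$. The one relation I will use is the commutation rule recalled before Lemma~\ref{AisZgraded}: $x^m f(h)=f(h-m)x^m$ and $y^n f(h)=f(h+n)y^n$. In the uniform notation $z^k$ these two rules become the single rule
\[
z^{a} f(h) = f(h-a)\, z^{a}\qquad\text{for all } a\in\mathbb{Z},\ f\in K[h].
\]
For $a>0$ this is the $x$-rule. For $a<0$ we have $z^a=y^{-a}$, and the $y$-rule gives $f(h+(-a))=f(h-a)$. For $a=0$ it is trivial. I will verify this uniform rule first, since everything else is bookkeeping built on it.

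For $n=1$ the statement reads $M=p_1(h)z^{a_1}$, which is just the substitution itself. For the inductive step, write $M=M'\,x_n^{a_n}$, where $M'=x_1^{a_1}\cdots x_{n-1}^{a_{n-1}}$, and set $s=a_1+\cdots+a_{n-1}$. By the induction hypothesis,
\[
M'=p_1(h)p_2(h-a_1)\cdots p_{n-1}(h-a_1-\cdots-a_{n-2})\,z^{s}.
\]
Multiplying on the right by $p_n(h)z^{a_n}$, I will move $p_n(h)$ to the left past $z^{s}$ using the uniform rule, which turns it into $p_n(h-s)$. Since all the polynomials in $h$ commute with one another, this factor can be placed at the end of the product of polynomial factors. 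What remains on the right is $z^{s}z^{a_n}$, which is collected into $z^{s+a_n}=z^{a_1+\cdots+a_n}$, giving exactly the displayed expression for $M$.

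The shifts themselves are routine once the uniform rule is established. I expect the main care to be needed at the last step, where the powers of $z$ are combined. There I will follow the paper's normalization that the homogeneous components are $A_k=K[h]z^k$, so that products of $z$'s are written as $z$ raised to the total degree. Keeping track of the indices in the shifts $h-a_1-\cdots-a_{i-1}$ is then a straightforward check.
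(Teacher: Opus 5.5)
Your approach is the same as the paper's, whose proof is a one-line appeal to $z^a p(h)=p(h-a)z^a$. Your check of that uniform rule is correct, and so is the shift bookkeeping. Moving each $p_j$ to the left past $z^{a_1}\cdots z^{a_{j-1}}$ gives exactly
\[
M=p_1(h)p_2(h-a_1)\cdots p_n(h-a_1-\cdots-a_{n-1})\,z^{a_1}z^{a_2}\cdots z^{a_n}.
\]
The gap is the last step, $z^{s}z^{a_n}=z^{s+a_n}$. This is false when $s$ and $a_n$ have opposite signs: $z^{1}z^{-1}=xy=h-1$ and $z^{-1}z^{1}=yx=h$, and neither equals $z^0=1$. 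The normalization $A_k=K[h]z^k$ only tells you that $z^sz^{a_n}\in K[h]z^{s+a_n}$, i.e. $z^sz^{a_n}=g(h)z^{s+a_n}$ for some $g\in K[h]$. The paper itself records this remark before Lemma~\ref{AisZgraded}; it does not give $g=1$. A concrete failure: for $n=2$, $a_1=1$, $a_2=-1$, $p_1=p_2=1$ you get $M=h-1$, while the displayed formula gives $1$. So the statement as written holds only when all nonzero $a_i$ have the same sign, and the paper's proof has the same omission as yours.

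What your induction actually proves is
\[
M=p_1(h)p_2(h-a_1)\cdots p_n(h-a_1-\cdots-a_{n-1})\,g_{a_1,\ldots,a_n}(h)\,z^{a_1+\cdots+a_n}.
\]
Here $g_{a_1,\ldots,a_n}$ is defined by $z^{a_1}\cdots z^{a_n}=g_{a_1,\ldots,a_n}(h)z^{a_1+\cdots+a_n}$. It is nonzero because $W_1$ is a domain, and it depends only on the degree sequence, not on the $p_i$. It equals $1$ exactly when all nonzero $a_i$ share a sign. Otherwise a filtration-degree count shows it has positive degree $(\sum|a_i|-|\sum a_i|)/2$.

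Alternatively, the formula becomes exact inside $\mathcal{S}_1$ if you replace $z^a$ by $\varepsilon^a$, since $\varepsilon^s\varepsilon^{a_n}=\varepsilon^{s+a_n}$ there. The price is that negative-degree coefficients are then constrained, e.g. $y=h\varepsilon^{-1}$.

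Keep in mind that the correction factor changes when the variables are permuted. For example, $x_1^{1}x_2^{-1}$ evaluates to $(h-1)p_1(h)p_2(h-1)$, while $x_2^{-1}x_1^{1}$ evaluates to $h\,p_2(h)p_1(h+1)$. Any later comparison of permuted monomials via this lemma therefore has to carry these fixed factors along.
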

\begin{proof}
This was established before Lemma \ref{AisZgraded}: clearly one has $z^a p(h) = p(h-a) z^a$ for every $p\in K[h]$ and every $a\in\mathbb{Z}$. 
\end{proof}

We shall prove that the identity from Lemma~\ref{basicidentity} generates the ideal of graded identities of $W_1$ in characteristic 0. We consider multilinear polynomials only. We shall induct on the degree of our polynomials $n$. 

Clearly there exist no graded identities if $n=1$. Also, the Weyl algebra has no zero divisors therefore there are no graded monomials that are graded identities. 

\begin{lemma}
\label{degree2]}
If $f=\alpha x_1^{a_1}x_2^{a_2} - \beta x_2^{a_2} x_1^{a_1}$ is a graded identity for $W_1$, $\alpha$, $\beta\in K$ then either $a_1=a_2=0$ or $\alpha=\beta=0$.
\end{lemma}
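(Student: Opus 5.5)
The plan is to evaluate $f$ on homogeneous elements of $W_1$ and compare coefficients, using the normal form of Lemma~\ref{canonicalform} and the fact (from Lemma~\ref{AisZgraded} and the remarks after it) that every element of $A_n$ is written uniquely as $p(h)z^n$ with $p\in K[h]$. Substitute $x_1^{a_1}=p_1(h)z^{a_1}$ and $x_2^{a_2}=p_2(h)z^{a_2}$ with arbitrary $p_1,p_2\in K[h]$. By Lemma~\ref{canonicalform},
\[
x_1^{a_1}x_2^{a_2}=p_1(h)p_2(h-a_1)z^{a_1+a_2},\qquad x_2^{a_2}x_1^{a_1}=p_2(h)p_1(h-a_2)z^{a_1+a_2}.
\]
Since both products lie in $A_{a_1+a_2}$ and this component is a free $K[h]$-module on $z^{a_1+a_2}$ (uniqueness of the normal form), $f$ being a graded identity is equivalent to the polynomial identity
\[
\alpha\, p_1(h)p_2(h-a_1)=\beta\, p_2(h)p_1(h-a_2)\quad\text{in } K[h], \text{ for all } p_1,p_2\in K[h].
\]

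Next I make three specific choices of $p_1,p_2$. Taking $p_1=p_2=1$ gives $\alpha=\beta$. So if $\alpha=\beta=0$ we are done; otherwise assume $\alpha=\beta\neq 0$ and divide by $\alpha$, obtaining $p_1(h)p_2(h-a_1)=p_2(h)p_1(h-a_2)$ for all $p_1,p_2$. Choosing $p_1=h$, $p_2=1$ yields $h=h-a_2$, so $a_2=0$ in $K$. Choosing $p_1=1$, $p_2=h$ yields $h-a_1=h$, so $a_1=0$ in $K$.

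The only delicate point is passing from $a_i=0$ in $K$ to $a_i=0$ in $\mathbb{Z}$. This uses the standing assumption that $\operatorname{char}K=0$, so the map $\mathbb{Z}\to K$ is injective; I expect this, rather than the computation itself, to be the step worth stating explicitly. It is also worth remarking in passing that the case $a_1=a_2=0$ is indeed consistent with Lemma~\ref{basicidentity}, where the constraint $\alpha=\beta$ recovers exactly the commutator $[x_1^0,x_2^0]$.
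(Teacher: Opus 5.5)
Your proof is correct and follows essentially the paper's route: substitute $p_i(h)z^{a_i}$, reduce via Lemma~\ref{canonicalform} to an identity in $K[h]$, and separate the shifts with test polynomials. The paper assumes $a_1\neq 0$ and evaluates at $h=0$ with $p_2(0)=0\neq p_2(-a_1)$ and $p_1(0)\neq 0$ to force $\alpha=0$, whereas you normalize with $p_1=p_2=1$ and then test with $h$.

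One caveat you share with the paper: Lemma~\ref{canonicalform} drops the factor $g_{12}(h)$ in $z^{a_1}z^{a_2}=g_{12}(h)z^{a_1+a_2}$ (e.g.\ $xy=h-1$), but your argument survives it. Let $g_{21}$ be the analogous factor for $z^{a_2}z^{a_1}$. Then $p_1=p_2=1$ gives $\alpha g_{12}=\beta g_{21}$ with $g_{12},g_{21}\neq 0$. So either $\alpha=\beta=0$, or you replace $\beta g_{21}$ by $\alpha g_{12}$, cancel in the domain $K[h]$, and recover exactly your identity $p_1(h)p_2(h-a_1)=p_2(h)p_1(h-a_2)$. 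This normalization is in fact more robust than the paper's evaluation at $h=0$: for $a_1=-1$, $a_2=1$ one has $g_{12}=h$, so both sides vanish at $h=0$.
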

\begin{proof}
Suppose that $(a_1,a_2)\ne (0,0)$. Put $x_1^{a_1} = p_1(h)z^{a_1}$, $x_2^{a_2}=p_2(h) z^{a_2}$. Then 
\[
f= (\alpha p_1(h)p_2(h-a_1) +\beta p_1(h-a_2)p_2(h))z^{a_1+a_2}. 
\]
Therefore we will have $\alpha p_1(h)p_2(h-a_1) +\beta p_1(h-a_2)p_2(h)=0$. If $a_1\ne 0$ we choose $p_2$ such that $p_2(h)=0$ and $p_2(-a_1)\ne 0$, and $p_1$ with the property that  $p_1(0)\ne 0$. In this way we obtain $\alpha=0$, then necessarily $\beta=0$. Similarly one treats the case when $a_1=0$ since in it one necessarily has $a_2\ne 0$.
\end{proof}

Since $[x_1^0,x_2^0]$ is a graded identity for $W_1$ we can, and shall work in the free graded algebra modulo the ideal of graded identities $I$ that is generated by the above graded polynomial. 

We recall some terminology introduced in \cite{GRWY}. 
\begin{definition}\label{def:balcom}
Let $u$ and $v$ be two graded monomials, then $v$ can be obtained from $u$ by means of a balanced commutation whenever $u=pqrs$ and $v=prqs$ where $p$ and $s$ are graded monomials (possibly empty), and $q$ and $r$ are graded monomials, both of $\mathbb{Z}$-degree 0. We shall denote this by $u\sim v$. 
\end{definition}

Clearly $\sim$ is an equivalence relation. 
Moreover one cannot distinguish $v$ and $u$, modulo the ideal $I$. In fact a stronger statement holds. 

Let $u=x_1^{a_1}\cdots x_n^{a_n}$ and $v=x_{\sigma(1)}^{a_{\sigma(1)}} \cdots x_{\sigma(n)}^{a_{\sigma(n)}}$ be two monomials where $v$ is obtained from $u$ by a permutation $\sigma$ of its variables. Substitute $x_i^{a_i}$ by $p_i(h) z^{a_i}$, $i=1$, \dots, $n$. Thus we obtain that in $W_1$  
\begin{align*}
u&=p_1(h)p_2(h-a_1) \cdots p_n(h-a_1-\cdots-a_{n-1})z^a,\\
v&=p_{\sigma(1)}(h) p_{\sigma(2)}(h-a_{\sigma(1)})  \cdots p_{\sigma(n)} (h-a_{\sigma(1)}-\cdots-a_{\sigma(n-1)}) z^a,
\end{align*}
where $a=a_1+\cdots+a_n$. Therefore $u=v$ in $W_1$ if and only if the two products of polynomials above are equal for every choice of the polynomials $p_i(h)$.  

\begin{proposition}
\label{balcom}
In the notation above, $u=v$ for every choice of the polynomials $p_i(h)\in K[h]$ if and only if $u\sim v$. 
\end{proposition}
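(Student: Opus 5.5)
We handle the two directions separately; the "if" direction is routine and the converse carries the content.

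\emph{If.} It is enough to treat one balanced commutation $u=pqrs$, $v=prqs$ with $\deg q=\deg r=0$, since $\sim$ is generated by these. By Lemma~\ref{canonicalform}, the factor contributed by each variable is $p_i$ evaluated at $h$ minus the total degree of the variables to its left. Variables in $p$ or $s$ see the same prefix degree in $u$ and in $v$, because $\deg q+\deg r$ is the same in both. Variables inside $q$ are shifted by $\deg r=0$ extra in $v$, and variables inside $r$ lose a shift of $\deg q=0$. So every factor is unchanged and the two products agree.

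\emph{Only if.} Suppose the two products coincide for all choices of $p_i$. Let $s_k=a_1+\cdots+a_k$ be the prefix sums of $u$, with $s_0=0$. Variable $x_i$ (at position $i$ in $u$) then contributes $p_i(h-s_{i-1})$, and in $v$ it contributes $p_i(h-t_i)$, where $t_i$ is the prefix degree preceding $x_i$ in $v$.

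\textbf{Step 1: matching shifts.} We show $t_i=s_{i-1}$ for every $i$. Fix $i$, take $p_j=1$ for $j\ne i$ and $p_i(h)=h$. The identity becomes $h-s_{i-1}=h-t_i$, so $t_i=s_{i-1}$. (Since $K$ has characteristic $0$, integer shifts are distinguished by polynomials.) Conversely, equal shifts clearly give equal products, so the hypothesis is equivalent to this purely combinatorial statement: the permutation $\sigma$ preserves, for each variable, the degree of the prefix before it.

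\textbf{Step 2: combinatorial core.} We prove that two arrangements with the same prefix degree before each variable are related by balanced commutations. We induct on $n$, moving the first letter of $v$ to the front of $u$. Let $x_j$ be the first letter of $v$, so $t_j=0$, hence $s_{j-1}=0$, and the block $x_1\cdots x_{j-1}$ of $u$ has degree $0$.
\begin{itemize}
\item If $j>1$, write $u=q\,x_j\,r'$ with $q=x_1\cdots x_{j-1}$ of degree $0$. We need a degree-$0$ block beginning with $x_j$ to swap with $q$. The natural choice is $r=x_j\cdots x_k$, where $k$ is the first index $\ge j$ with $s_k=0$; if none exists, take $k=n$ when the total degree is $0$.
\end{itemize}

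The main obstacle is the case where the prefix sums never return to $0$ after position $j$. Then we cannot balance-commute $x_j$ to the front by a single swap. Instead we must use the positions of the other letters of $v$, which also have prescribed prefix sums, to show that such a situation contradicts the equality of shifts. The intended method is to track the walk of prefix sums: $u$ and $v$ are two orderings of the same multiset of steps, assigning each step the same starting height. So they are two Eulerian-type traversals of the same labelled multigraph on $\mathbb{Z}$.

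Rearrangements of such traversals that fix each step's starting height are exactly generated by swapping consecutive closed excursions from a common height, which is what a balanced commutation is. This is a standard fact, analogous to the result used in \cite{GRWY}, and we would adapt that argument here. After the swap $u\sim x_j\cdots$, we remove the common first letter. The remaining words again have equal prefix data, shifted by $a_j$, and we apply induction.
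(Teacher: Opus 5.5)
\textbf{Step 2 fails, and more detail cannot fix it: the combinatorial fact you invoke is false, and so is the proposition as stated.} Your ``if'' direction and Step~1 are correct. Take $a\neq 0$, $u=x_1^{a}x_2^{-a}x_3^{a}$ and $v=x_3^{a}x_2^{-a}x_1^{a}$.

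Every variable has the same prefix degree in both words: $0$, $a$, $0$ for $x_1,x_2,x_3$. Accordingly, the two products $p_1(h)p_2(h-a)p_3(h)$ and $p_3(h)p_2(h-a)p_1(h)$ coincide, since $K[h]$ is commutative.

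However, the only nonempty degree-$0$ subwords of $u$ are $x_1x_2$ and $x_2x_3$. These overlap, so no nontrivial balanced commutation applies to $u$. Its $\sim$-class is $\{u\}$, and $u\not\sim v$.

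This is exactly your obstacle case ($j=3$, $q=x_1x_2$, $s_3=a\neq 0$), and it is fully compatible with equal shifts. In your Eulerian picture, no height is visited three times, so there are no two adjacent closed excursions to swap, yet there are two traversals.

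The paper's proof breaks at the same place, in the unjustified sentence ``In case there is no such submonomial, this means we cannot obtain the same shifts''. Reading $\sim$ as congruence modulo $I$ does not rescue it. The multilinear component of the T-ideal generated by $[x_1^0,x_2^0]$ in the variables $x_1^{a},x_2^{-a},x_3^{a}$ is spanned by elements $w[m_1,m_2]w'$, where $m_1,m_2$ are supported on disjoint nonempty variable sets of total degree $0$. No two such sets exist, so this component is zero. Hence $x_1^{a}x_2^{-a}x_3^{a}-x_3^{a}x_2^{-a}x_1^{a}$ is a graded identity of $W_1$ that does not follow from $[x_1^0,x_2^0]$.

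\textbf{The missing ingredient is a second move,} $ABC\mapsto CBA$ whenever $\deg A=\deg C=-\deg B$. This is the $\mathbb{Z}$-graded analogue of the identity in Lemma~\ref{basicid2}; it holds in $W_1$ by the same computation. A balanced commutation is its degenerate case $\deg A=\deg C=0$, $B$ empty.

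With this move your induction closes. By Step~1, start and end heights of letters are the same in $u$ and $v$.
\begin{itemize}
\item In the obstacle case, let $f$ be the first letter of $v$ that belongs to $q$. Let $e$ be the letter immediately before $f$ in $v$; it belongs to $x_jr'$.
\item The end height of $e$ equals the start height $c$ of $f$. Moreover $c\neq 0$, because no letter of $x_jr'$ ends at height $0$.
\item Cut $q$ just before $f$, and cut $x_jr'$ just after $e$. This gives $u=ABCD$ with $\deg A=\deg C=c=-\deg B$, and $C$ begins with $x_j$.
\item Then $u\mapsto CBAD$ starts with $x_j$, so you can strip $x_j$ and induct.
\end{itemize}
What you then prove is a corrected statement: $u$ and $v$ agree under all substitutions if and only if they are connected by balanced commutations together with these three-block moves.
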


\begin{proof}
If $u\sim v$ then obviously $u$ and $v$ are equal as elements of $W_1$. Now let $u$ and $v$ assume the same values under every substitution of the $x_i^{a_i}$ by elements of $W_1$, respecting the grading.  This means that the products of polynomials 
\begin{align*}
&p_1(h)p_2(h-a_1) \cdots p_n(h-a_1-\cdots-a_{n-1}),\\
&p_{\sigma(1)}(h) p_{\sigma(2)}(h-a_{\sigma(1)})  \cdots p_{\sigma(n)} (h-a_{\sigma(1)}-\cdots-a_{\sigma(n-1)})
\end{align*}
must be equal for the fixed $a_1$, \dots, $a_n$, $\sigma\in S_n$, and for every choice of the $p_i\in K[h]$. 

If $\sigma(1)=1$ then we use the fact that $W_1$ has no zero divisors, hence we cut out $x_1^{a_1}$ and we can proceed by induction on the length of the monomials. Hence suppose $\sigma(1)=k>1$. The ``shifts" $p_{\sigma(r)} (h-a_{\sigma(1)-\cdots-a_{\sigma(r-1)}})$ in the variable $h$ must be the same in both products, for every $r$. Therefore the shift $a_{\sigma(1)}+\cdots+a_{\sigma(k-1)}$ in $p_1$ in the second product must be equal to 0, that is the degree of the product $x_{\sigma(1)}^{a_{\sigma(1)}}\cdots x_{\sigma(k-1)}^{a_{\sigma1(k-1)}}$ must be 0.

Now in case $a_1=0$, we swap the positions of $x_{\sigma(1)}^{a_{\sigma(1)}}\cdots x_{\sigma(k-1)}^{a_{\sigma(k-1}}$ and $x_1^{a_1}$ in the second monomial, modulo $I$ this is harmless. Otherwise we look for a submonomial of the second monomial, starting from position $k$, and which is of degree 0. If there is such a submonomial then we swap its position with the position of the leading submonomial of length $k-1$ (which is also of degree 0). In case there is no such submonomial, this means we cannot obtain the same shifts as in the first monomial, and our proof is complete. 
\end{proof}

\begin{corollary}
\label{maincor}
Let $v$ be a multilinear monomial in the variables $x_1^{a_1}$, \dots, $x_n^{a_n}$. If we substitute $x_i^{a_i}$ by $p_i(h) z^{a_i}$ in $W_1$ and if we are given the polynomial 
\[
p_{\sigma(1)}(h) p_{\sigma(2)}(h-a_{\sigma(1)})  \cdots p_{\sigma(n)} (h-a_{\sigma(1)}-\cdots-a_{\sigma(n-1)})
\]
that appears as in the proof of Proposition \ref{balcom}, then we can recover the monomial, up to balanced commutations. 
\end{corollary}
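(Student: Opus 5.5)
The plan is to treat the given product as a formal object: a polynomial in the independent indeterminates $p_1,\dots,p_n$, i.e.\ as the multiset of ``shifted factors'' $p_i(h-s_i)$, where
\[
s_i=a_{\sigma(1)}+\cdots+a_{\sigma(j-1)}, \qquad \sigma(j)=i,
\]
is the partial degree of the prefix of $v$ preceding $x_i^{a_i}$. First I will make precise what ``given the polynomial'' means. Since the identity must hold for every choice of $p_i\in K[h]$ and $K$ is infinite of characteristic $0$, the shift $s_i$ attached to each $p_i$ is determined by the product. Concretely, take $p_j=1$ for $j\ne i$ and $p_i(h)=h$; the product is then $h-s_i$, so its root is $s_i$. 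Hence the data we are handed is exactly the function $i\mapsto s_i$, i.e.\ the vector of prefix degrees of all variables.

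Next I will show that this vector determines $v$ up to balanced commutations, by reconstructing a canonical monomial from it. Group the variables by the value of $s_i$. Reading $v$ from left to right, the prefix degree changes by $a_{\sigma(j)}$ at each step, so $v$ is a walk on $\mathbb{Z}$ starting at $0$ in which the variable $x_i$ is the step taken from level $s_i$. The variables sitting at a given level $c$ are visited in some order, and between two consecutive visits to level $c$ the walk traverses a segment of total degree $0$. Two monomials with the same level function are therefore two Eulerian-type arrangements of the same set of labelled steps. I will show they are connected by balanced commutations via a normal form: at the leftmost position where two such monomials $v$ and $v'$ differ, both are at the same level $c$. The variable that $v'$ uses there occurs later in $v$, also at level $c$. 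So the block of $v$ from the current position up to that occurrence has degree $0$, and the variable together with the rest of its excursion back to level $c$ also forms a degree-$0$ block (or it ends the word, if it never returns). Swapping these two degree-$0$ blocks is a balanced commutation, and it increases the common prefix. Induction on the length of the common prefix then finishes the argument, in the same spirit as the proof of Proposition \ref{balcom}.

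The main obstacle is the case where, after moving to the occurrence at level $c$, the walk never returns to level $c$ (the final excursion). Then the block to be moved is a suffix of nonzero degree, and a balanced commutation requires both pieces to have degree $0$. I will handle this by choosing the prefix block instead. The segment of $v$ between the current position and the later occurrence returns to $c$, so it has degree $0$. Its complement in the relevant range, namely the part of $v'$ that corresponds to it, must also close up at level $c$, because $v'$ uses the same multiset of steps at each level; a counting argument on the number of departures and returns at level $c$ shows this. With that in place, the swap is again a legitimate balanced commutation. Alternatively, Proposition \ref{balcom} already gives equality of values implies $\sim$, so the corollary follows directly once the first step shows that the product determines the values of $v$ for all substitutions.
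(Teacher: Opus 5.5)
Your first step is fine: as a function of $p_1,\dots,p_n$, the product determines exactly the shift vector $(s_1,\dots,s_n)$. The gap is in the ``main obstacle'' paragraph, and it cannot be closed, because the shift vector does \emph{not} determine $v$ up to balanced commutations. Take $v=x_1^{1}x_2^{-1}x_3^{1}$ and $v'=x_3^{1}x_2^{-1}x_1^{1}$. Both have $s_1=s_3=0$ and $s_2=1$, so both give the same product $p_1(h)p_2(h-1)p_3(h)$. In fact both evaluate in $W_1$ to $p_1(h)p_3(h)p_2(h-1)(h-1)\,x$; the factor $h-1=xy$, which Lemma~\ref{canonicalform} drops, is common to both. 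Yet no balanced commutation applies to $v$ (or to $v'$). Every nonempty degree-$0$ subword contains $x_2^{-1}$, so no two disjoint ones exist, and therefore $v\not\sim v'$. This pair is precisely your obstacle case: $c=0$, the closed block is $x_1^{1}x_2^{-1}$, and the excursion of $x_3^{1}$ ends at level $1$. Your counting argument correctly shows that the excursion of $x_3^{1}$ in $v'$ returns to level $0$. But that does not produce a second degree-$0$ block next to the first one in $v$, and there is none.

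Your fallback does not rescue this. Proposition~\ref{balcom} fails on the same pair, and its proof breaks at the same point: the final sentence, that if no degree-$0$ submonomial starts at position $k$ then the shifts cannot agree, is false here. So the paper's one-line derivation of the corollary from Proposition~\ref{balcom} has the same gap, and the corollary as stated is false. The pair also shows that $x_1^{\alpha}x_2^{-\alpha}x_3^{\alpha}-x_3^{\alpha}x_2^{-\alpha}x_1^{\alpha}$, the $\mathbb{Z}$-graded analogue of Lemma~\ref{basicid2}, is a graded identity of $W_1$ in characteristic $0$. It does not lie in the T-ideal generated by $[x_1^0,x_2^0]$, since that ideal's multilinear component in these three variables is zero for $\alpha\neq 0$. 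So Theorem~\ref{mainthm} fails too.

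What your walk argument does prove is the corrected statement, once you also allow the move $PQR\mapsto RQP$ with $\deg P=\deg R=-\deg Q$. In the obstacle case, write $v=A\,B\,x_iF$, where $B$ is closed at level $c$ and $x_iF$ never returns to $c$. In $v'$ the walk after $x_i$ does reach $c$ again. So the levels visited by $B$ must meet the levels visited by $x_iF$ after its first step, say at $w\neq c$; otherwise that walk in $v'$ could only ever use variables of $F$ and would never reach $c$. Cut $B=B_1B_2$ and $x_iF=F_1F_2$ at level $w$. Then $\deg B_1=\deg F_1=-\deg B_2=w-c\neq 0$. The move turns $v$ into $A\,F_1B_2B_1F_2$, which starts with $A\,x_i$ and has the same shift vector. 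This is the mechanism of Lemma~\ref{case2}, and with both kinds of moves your induction on the length of the common prefix goes through.
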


\begin{proof}
The proof follows from Proposition \ref{balcom}. 
\end{proof}

\begin{theorem}
\label{mainthm}
Let $n\ge 2$ and let $f$ be a multilinear graded identity for $W_1$. Then $f\in I$, that is $f$ is a consequence of $[x_1^0,x_2^0]$.
\end{theorem}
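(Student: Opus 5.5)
The plan is to reduce $f$ modulo $I$ to a combination of pairwise inequivalent monomials and then show, by a choice of substitutions, that every coefficient in that combination vanishes. Write $f=\sum_{\sigma\in S_n}\alpha_\sigma M_\sigma$, where $M_\sigma=x_{\sigma(1)}^{a_{\sigma(1)}}\cdots x_{\sigma(n)}^{a_{\sigma(n)}}$. If $u\sim v$, then $u-v\in I$, since a balanced commutation swaps two adjacent degree-$0$ submonomials $q,r$, and $qr-rq$ is a consequence of $[x_1^0,x_2^0]$ (substitute $q$ and $r$ for $x_1^0$ and $x_2^0$). So modulo $I$ we may group the monomials of $f$ by their $\sim$-classes $C_1,\dots,C_s$. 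Choose a representative $m_j$ of each class and rewrite
\[
f\equiv \sum_{j=1}^s \beta_j m_j \pmod I,\qquad \beta_j=\sum_{M_\sigma\in C_j}\alpha_\sigma .
\]
Since $I\subseteq T_{\mathbb Z}(W_1)$, the polynomial $g=\sum_j\beta_j m_j$ is again a graded identity. It therefore suffices to show that all $\beta_j=0$.

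Substitute $x_i^{a_i}=p_i(h)z^{a_i}$. Every monomial has total degree $a=a_1+\cdots+a_n$, so by Lemma~\ref{canonicalform} we get $g=P(h)z^a$, where
\[
P=\sum_j\beta_j\, \Pi_j, \qquad \Pi_j=\prod_{r=1}^n p_{\sigma_j(r)}\bigl(h-s_{j,r}\bigr),
\]
and $s_{j,r}$ is the partial degree sum preceding position $r$ in $m_j$. Each variable contributes exactly one factor, so for monomial $m_j$ the data is the shift vector $(t_{j,1},\dots,t_{j,n})$, where $t_{j,i}$ is the shift applied to $p_i$. Proposition~\ref{balcom} and Corollary~\ref{maincor} say that distinct classes give distinct products $\Pi_j$ as functions of $(p_1,\dots,p_n)$. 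What we need here is slightly more: that the $\Pi_j$ are linearly independent. The natural invariant is the shift vector. Two monomials have the same shift vector iff they have the same shifted products, iff (by Proposition~\ref{balcom}) they are $\sim$-equivalent. So distinct classes have distinct shift vectors $t_j\in\mathbb Z^n$.

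It remains to prove linear independence of the maps $(p_1,\dots,p_n)\mapsto\prod_i p_i(h-t_{j,i})$ for distinct vectors $t_j$. I would use linear factors: take $p_i(h)=h-c_i$ with independent parameters $c_i$, working over the infinite field $K$, so that the identity holds for all values of the $c_i$. Then
\[
\Pi_j=\prod_i (h-t_{j,i}-c_i),
\]
and $\sum_j\beta_j\Pi_j=0$ becomes a polynomial identity in $h,c_1,\dots,c_n$. To separate the terms, set $h$ to a value that kills one factor in some terms and not in others, for instance specialise $c_i=\lambda_i-t_{j_0,i}$. Alternatively, and more cleanly, take $p_i(h)=e^{\mu_i h}$ formally, or better choose $p_i(h)=(h+u_i)^{N}$-type polynomials. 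Simplest, I think, is to expand and compare coefficients: the coefficient extraction from $\prod_i(h-c_i-t_{j,i})$ yields the elementary symmetric data of the shifted values, which does not obviously separate. So I would instead induct on the distinct values of $t_{j,1}$. Choose $p_1$ vanishing at $h=\tau$, where $\tau$ is chosen from the finitely many values $t_{j,1}$, and choose the other $p_i$ nonvanishing at the relevant points, mimicking Lemma~\ref{degree2]}. Evaluating at suitable $h$ and peeling off one coordinate at a time then gives a triangular system forcing all $\beta_j=0$.

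The main obstacle is exactly this independence step. Proposition~\ref{balcom} only distinguishes pairs of classes, whereas a linear relation among many products must be excluded. My first attempt would be the cleaner root-counting argument. Take $p_i(h)=\prod_{k}(h-c_{i,k})$ with generic roots. Then the multiset of roots of $\Pi_j$ is $\{c_{i,k}+t_{j,i}\}$, and for generic $c$ these multisets determine $t_j$. Viewing everything as polynomials in the algebraically independent indeterminates $c_{i}$ (one root each suffices), the products $\prod_i(h-c_i-t_{j,i})$ are distinct products of distinct irreducible linear forms in $K[h,c_1,\dots,c_n]$. The substitution $c_i\mapsto c_i - h$ turns them into the monomials $\prod_i(-c_i-t_{j,i})$ in the $c_i$. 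For distinct vectors $t_j$ these are linearly independent: after translation they are distinct products $\prod_i(c_i+t_{j,i})$ of one linear factor per variable. Independence then follows by induction on $n$, grouping according to $t_{j,n}$ and using that the polynomials $c_n+t$ for distinct $t$ are independent modulo one another. This yields $\beta_j=0$, hence $f\in I$.
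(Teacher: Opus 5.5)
Your reduction is sound and follows the paper. Modulo $I$ you keep one representative per $\sim$-class, substitute $x_i^{a_i}\mapsto p_i(h)z^{a_i}$, and use Proposition~\ref{balcom} to see that distinct classes have distinct shift vectors $t_j\in\mathbb{Z}^n$. You are also right that the real content is the linear independence of the maps $(p_1,\dots,p_n)\mapsto\prod_i p_i(h-t_{j,i})$, which the paper's proof only asserts.

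\textbf{The gap.} The argument you settle on for this independence step is false. With $p_i(h)=h-c_i$ and $d_i=c_i-h$ one gets $\Pi_j=(-1)^n\prod_i(d_i+t_{j,i})$. This lies in the $2^n$-dimensional space of polynomials of degree at most one in each $d_i$. Distinct products of this shape are in general linearly dependent: already $(d+0)-2(d+1)+(d+2)=0$. So the polynomials $c_n+t$ for distinct $t$ are not ``independent modulo one another'', and your induction breaks.

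Concretely, take $n=4$ and $(a_1,a_2,a_3,a_4)=(1,10,100,1000)$. No nonempty submonomial has degree $0$, so the $24$ multilinear monomials are pairwise inequivalent and have pairwise distinct shift vectors. Yet their products lie in a space of dimension $16$. Hence there are $\beta_j$, not all zero, such that $\sum_j\beta_j m_j$ vanishes under every substitution $x_i^{a_i}\mapsto(h-c_i)z^{a_i}$. Substitutions with one root per variable simply cannot separate the classes.

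\textbf{The repair.} You need polynomials of higher degree; this is essentially the separating idea you sketched and then dropped.
\begin{enumerate}
\item Since $P=\sum_j\beta_j\Pi_j$ is the zero polynomial for every choice of the $p_i$, so is $P(0)=\sum_j\beta_j\prod_i p_i(-t_{j,i})$.
\item Fix $j_0$. For each $i$, the set $T_i=\{t_{j,i}: 1\le j\le s\}$ is a finite set of integers, which are distinct in $K$ because $\operatorname{char}K=0$.
\item By Lagrange interpolation there is $p_i\in K[h]$ with $p_i(-t_{j_0,i})=1$ and $p_i(-\tau)=0$ for every $\tau\in T_i$ with $\tau\ne t_{j_0,i}$.
\item Because the vectors $t_j$ are pairwise distinct, $\prod_i p_i(-t_{j,i})=\delta_{j,j_0}$. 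Hence $0=P(0)=\beta_{j_0}$.
\end{enumerate}
With this in place of your last paragraph the proof goes through, still relying on Proposition~\ref{balcom} as the paper does.
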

\begin{proof}
Suppose, on the contrary, that $f$ is a graded identity for $W_1$ but $f\notin I$. Write $f$ as 
\[
\sum_{\sigma\in S_n}  \alpha_\sigma x_{\sigma(1)}^{a_{\sigma(1)}} \cdots x_{\sigma(n)}^{a_{\sigma(n)}}, \quad \alpha_\sigma\in K
\]
where at least one of the $\alpha_\sigma\ne 0$. In the above sum we can collect the monomials that can be obtained from each other by means of balanced commutations (it is obvious that $\sim$ is an equivalence relation). 

Therefore we write $f=\alpha_1 m_1+\cdots +\alpha_k m_k$. Here $\alpha_1$, \dots, $\alpha_k\in K$ are all nonzero, and if $i\ne j$ then $m_i\not\sim m_j$. We substitute $x_i^{a_i}$ for $p_i(h) z^{a_i}$ in $W_1$ and then represent each $m_r$ as $q_r(h) z^a$ where $a$ is the degree of $f$ in the $\mathbb{Z}$-grading. Thus $f=(q_1(h)+\cdots+ q_k(h)) z^a$. We know, by Proposition \ref{balcom} and Corollary \ref{maincor}, that the shifts of $p_1$, \dots, $p_n$ in the $q_r(h)$ are different. Therefore we can choose the polynomials $p_1$, \dots, $p_n$ in such a way that the coefficient $q_1(h)+\cdots+ q_k(h)$ is nonzero, and $f$ fails to be a graded identity for $W_1$.
\end{proof}

\begin{corollary}
\label{basischar0}
The ideal $T_{\mathbb{Z}}(W_1)$ of $\mathbb{Z}$-graded identities of $W_1$ is generated by the single identity $[x_1^0, x_2^0]$.
\end{corollary}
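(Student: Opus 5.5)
The corollary follows by combining Lemma~\ref{basicidentity}, Lemma~\ref{multihomgraded} b) and Theorem~\ref{mainthm}. First, Lemma~\ref{basicidentity} shows that $[x_1^0,x_2^0]$ lies in $T_{\mathbb{Z}}(W_1)$. Since $T_{\mathbb{Z}}(W_1)$ is a graded T-ideal, the graded T-ideal $I$ generated by $[x_1^0,x_2^0]$ is contained in $T_{\mathbb{Z}}(W_1)$.

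For the reverse inclusion I use that $K$ has characteristic $0$, which is the standing assumption of this section. By Lemma~\ref{multihomgraded} b), $T_{\mathbb{Z}}(W_1)$ is generated, as a graded T-ideal, by its multilinear elements. It therefore suffices to show that every multilinear graded identity $f$ of $W_1$ lies in $I$.

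If $f$ has degree $n\ge 2$, this is exactly Theorem~\ref{mainthm}. The remaining case is $n\le 1$. A multilinear polynomial of degree $0$ is a scalar $\alpha$, and it is an identity only if $\alpha=0$, since $W_1$ is unital. A multilinear polynomial of degree $1$ has the form $\alpha x_1^{a}$; substituting the nonzero element $z^{a}\in A_a$ shows that it is an identity only if $\alpha=0$. So in these low degrees the only identity is $0$, which trivially lies in $I$.

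Hence $T_{\mathbb{Z}}(W_1)\subseteq I$, and together with the first inclusion we get equality. The only point needing care is the passage to multilinear elements, which is exactly where characteristic $0$ is used. All the real work has already been done in Theorem~\ref{mainthm}, so this step is just bookkeeping.
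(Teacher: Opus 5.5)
Your proposal is correct and is essentially the paper's own argument: the paper states this corollary without a separate proof, as an immediate consequence of Lemma~\ref{basicidentity}, the reduction to multilinear identities in characteristic $0$ (Lemma~\ref{multihomgraded} b)), and Theorem~\ref{mainthm}. The paper also notes beforehand that there are no graded identities of degree $1$, so your separate handling of degrees $0$ and $1$ only makes that bookkeeping explicit.
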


\section{Graded identities in characteristic $p>2$}
In this section we assume that $K$ is an infinite field of characteristic $p>2$, and we consider gradings and graded identities of $W_1$.

In this case $W_1$ is not a simple algebra; moreover it has a large centre. It is well known that $x^p$ and $y^p\in Z(W_1)$, the centre of $W_1$. It was proved in \cite{lopatin} that $W_1$ satisfies the same ordinary identities as the full matrix algebra $M_p(K)$. 

It is well known that the elements $x^ry^s$, $r$, $s\ge 0$, form a $K$-basis of the vector space of $W_1$ in any characteristic. Let $G=\mathbb{Z}_p$ be the cyclic group of order $p$, here we consider the following $G$-grading on $W_1$. We write $W_1=\oplus_{k=0}^{p-1} A_k$ where $A_k$ is the span of all $x^r y^s$, where $r-s\equiv k\pmod{p}$. 

\begin{lemma}
\label{p-grading}
The decomposition $W_1=\oplus_{k=0}^{p-1} A_k$ is  a $G$-grading on $W_1$.
\end{lemma}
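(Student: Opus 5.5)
The plan is to obtain the $\mathbb{Z}_p$-grading from a $\mathbb{Z}$-grading by reducing degrees modulo $p$. First, assign $\deg x=1$ and $\deg y=-1$ on the free algebra $K\langle x,y\rangle$. This makes it $\mathbb{Z}$-graded, with the monomial in $x,y$ having $r$ letters $x$ and $s$ letters $y$ of degree $r-s$. The defining relation $yx-xy-1$ is homogeneous of degree $0$, since $yx$, $xy$ and $1$ all have degree $0$. Hence the ideal it generates is a graded ideal, and the quotient $W_1$ inherits a $\mathbb{Z}$-grading. In it, the image of any word of degree $d$ lies in the $d$-th component $B_d$. This argument does not depend on the characteristic.

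Next, identify the components $B_d$ with the spans of normal monomials. Any word in $x,y$ can be rewritten as a linear combination of $x^ry^s$ by repeatedly applying $yx=xy+1$. Each rewriting step replaces $yx$ by $xy$ (same degree) or by $1$, which removes one $x$ and one $y$ and so keeps $r-s$ unchanged. Hence a word of degree $d$ is a combination of $x^ry^s$ with $r-s=d$, and $B_d$ equals the span of those $x^ry^s$ with $r-s=d$. Because the $x^ry^s$ form a basis (quoted just before the lemma), $W_1=\oplus_{d\in\mathbb{Z}}B_d$ is indeed a direct sum.

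Finally, set $A_k=\oplus_{d\equiv k \pmod p}B_d$. These are exactly the spans of the $x^ry^s$ with $r-s\equiv k\pmod p$. They give a direct sum decomposition because the basis is partitioned by residue classes. Moreover $A_kA_l\subseteq\sum B_{d}B_{e}\subseteq\sum B_{d+e}\subseteq A_{k+l}$, with indices read modulo $p$. This proves the lemma.

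No real obstacle is expected. The only point needing care is the homogeneity of the relation, which makes the passage to the quotient legitimate; this is just the observation $\deg(yx)=\deg(xy)=\deg 1=0$. Alternatively, one can verify $A_kA_l\subseteq A_{k+l}$ directly by computing $x^ry^s\cdot x^{r'}y^{s'}$ via the formula $y^sx^{r'}=\sum_i i!\binom{s}{i}\binom{r'}{i}x^{r'-i}y^{s-i}$. Each resulting term $x^{r+r'-i}y^{s+s'-i}$ has degree $(r-s)+(r'-s')$, which gives the same conclusion.
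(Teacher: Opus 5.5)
Your proof is correct, but it takes a more structural route than the paper.

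The paper simply invokes the normal-ordering formula (\ref{basic_mult}) for $x^ry^s\cdot x^uy^v$, asserted to be ``easily shown by induction.'' It then notes that every term $x^{r+u-i}y^{s+v-i}$ satisfies $(r+u-i)-(s+v-i)=(r-s)+(u-v)$. This is exactly the alternative you sketch in your last sentence.

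Your main argument instead observes that the defining relation $yx-xy-1$ is homogeneous of degree $0$ for $\deg x=1$, $\deg y=-1$. Hence $W_1$ carries a $\mathbb{Z}$-grading in every characteristic, and the $\mathbb{Z}_p$-grading is just its image under $\mathbb{Z}\to\mathbb{Z}_p$. Your $B_d$ are precisely the components $K[h]z^d$ of Lemma~\ref{AisZgraded}.

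What your route buys is that it needs no computation at all, and it makes transparent that this $\mathbb{Z}_p$-grading is a coarsening. You can even drop the rewriting step. Since $x^ry^s\in B_{r-s}$ trivially, and both $\{B_d\}$ and the spans of $\{x^ry^s : r-s=d\}$ give direct-sum decompositions of $W_1$, the two families must coincide.

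What the paper's route buys is economy within the section. Formula (\ref{basic_mult}) is needed anyway in the very next lemma, to compare the two products in Lemma~\ref{basicid1}, so using it here costs nothing extra.
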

\begin{proof}
It is easily shown by induction that 
\begin{equation}
\label{basic_mult}
x^ry^s\cdot x^uy^v = \sum_{i\ge 0} i! \binom{u}{i} \binom{s}{i} x^{r+u-i} y^{s+v-i}
\end{equation}
for every $r$, $s$, $u$, $v\ge 0$. We assume, as usual with binomial coefficients, that $\displaystyle\binom{t}{i} =0$ whenever $i>t$.

Since $r+u-i -(s+v-i) = r-s +u-v$, we have that $A_kA_l\subseteq A_{k+l\pmod{p}}$, and the above defines a $\mathbb{Z}_p$-grading on $W_1$.
\end{proof}

\begin{lemma}
\label{basicid1}
 $A_0$ is commutative, that is 
\begin{equation}
\label{deg0}
x^ry^s\cdot x^uy^v = x^uy^v \cdot x^ry^s
\end{equation}
whenever $r\equiv s\pmod{p}$ and $u\equiv v\pmod{p}$.
\end{lemma}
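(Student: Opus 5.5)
Let $h=yx$. I would first describe $A_0$ structurally. In characteristic $p$ the relation $[y,x]=1$ still gives $x\,f(h)=f(h-1)\,x$ and $y\,f(h)=f(h+1)\,y$, and $x^p$, $y^p$ are central. The plan is to show that $A_0$ is contained in the commutative subalgebra $C$ generated by $h$, $x^p$ and $y^p$. If this holds, the lemma follows at once, because $h$ commutes with $x^p$ and $y^p$. Indeed, $x^p h = (h-p)x^p = h x^p$ in characteristic $p$, and centrality gives the same for $y^p$.

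To prove $A_0\subseteq C$, I will take a basis element $x^ry^s$ with $r\equiv s\pmod p$. If $r\ge s$, write $r=s+mp$, so that $x^ry^s=(x^p)^m x^sy^s$. If $r<s$, write $s=r+mp$, so that $x^ry^s=x^ry^r(y^p)^m$. It remains to check that $x^sy^s\in K[h]$ for every $s$. This is the standard identity $x^sy^s=\prod_{j=0}^{s-1}(xy-j)$, and $xy=yx-1=h-1$. I would prove it by induction on $s$: write $x^{s+1}y^{s+1}=x\,(x^sy^s)\,y$, then move $x$ and $y$ past the polynomial in $xy$ using $x\,g(xy)=g(xy-1)\,x$. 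That argument is independent of the characteristic. Hence every basis element of $A_0$ is a product of a polynomial in $h$ with a power of $x^p$ or of $y^p$, and so lies in $C$.

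Finally, since $C$ is commutative and contains $A_0$, any two elements of $A_0$ commute, which is exactly (\ref{deg0}).

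The only step needing care is the commutation $x^p h = h x^p$, that is, that $h$ really commutes with the central elements. Here centrality of $x^p$ and $y^p$ already suffices, and I would cite that rather than recompute it. One could instead try to verify (\ref{deg0}) directly from the product formula (\ref{basic_mult}), but the factorial and binomial coefficients modulo $p$ make that route messy. So I will use the structural argument above.
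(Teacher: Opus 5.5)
Your proof is correct, but it takes a different route from the paper.

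The paper verifies (\ref{deg0}) directly from the product formula (\ref{basic_mult}). It compares the coefficients of $x^{r+u-i}y^{s+v-i}$ on both sides and discards the terms with $i\ge p$, since $i!=0$ there. For $i\le p-1$ it uses that the falling factorials $s(s-1)\cdots(s-i+1)$ and $(s+ap)(s+ap-1)\cdots(s+ap-i+1)$ agree modulo $p$, and likewise for $v$ and $v+bp$.

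You instead locate $A_0$ structurally. You write each basis element $x^ry^s$ with $r\equiv s\pmod p$ as $(x^p)^m x^sy^s$ or $x^ry^r(y^p)^m$, and use $x^sy^s=\prod_{j=0}^{s-1}(xy-j)\in K[h]$. This gives $A_0\subseteq K[h,x^p,y^p]$, which is commutative because $x^p$ and $y^p$ are central. All steps check out, including $x\,g(xy)=g(xy-1)\,x$ and the induction for $x^sy^s$.

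What your route buys:
\begin{itemize}
\item It avoids all coefficient bookkeeping.
\item It yields more than commutativity. Since $h$, $x^p$, $y^p$ all lie in $A_0$, you get $A_0=K[h,x^p,y^p]$, the analogue of $A_0=K[h]$ in characteristic $0$.
\item It uses the same reduction modulo the central elements $x^p$, $y^p$ that the paper itself uses in the proof of Lemma~\ref{basicid2}.
\end{itemize}

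The two arguments use characteristic $p$ at different points. Yours needs it only for the centrality of $x^p$ and $y^p$, namely $[y,x^p]=p\,x^{p-1}=0$ and symmetrically for $y^p$. The paper's computation is self-contained once (\ref{basic_mult}) is available, and relies instead on the $p$-periodicity of falling factorials modulo $p$.
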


\begin{proof}
Compute, according to Eq.~\ref{basic_mult}, 
\begin{align*}
x^ry^s\cdot x^uy^v &= \sum_{i\ge 0} i! \binom{s}{i} \binom{u}{i} x^{r+u-i} y^{s+v-i}, \\
x^uy^v \cdot x^ry^s &=\sum_{i\ge 0} i! \binom{v}{i} \binom{r}{i} x^{u+r-i}y^{v+s-i}.
\end{align*}
Clearly if $i\ge p$ the corresponding terms with $i!$ vanish. 

Comparison of the coefficients of $x^{r+u-i} y^{s+v-i} = x^{u+r-i}y^{v+s-i}$ yields, assuming $r=s+ap$ and $u=v+bp$ for some integers $a$ and $b$, that
\[
i!\binom{s}{i} \binom{v+bp}{i} = s(s-1)\cdots (s-i+1)\frac{(v+bp)(v+bp-1)\cdots (v+bp-i+1)}{i!}
\]
and that
\[
i! \binom{v}{i} \binom{s+ap}{i} = v(v-1)\cdots (v-i+1) \frac{(s+ap)(s+ap-1)\cdots (s+ap-i+1)}{i!}.
\]
Since we can assume $i\le p-1$ then $i!$ is invertible and we cancel it out from the denominators. Then we observe that 
\begin{align*}
s(s-1)\cdots (s-i+1) &\equiv (s+ap)(s+ap-1)\cdots (s+ap-i+1)\pmod{p}, \\ 
v(v-1)\cdots (v-i+1) &\equiv (v+bp)(v+bp-1)\cdots (v+bp-i+1) \pmod{p}, 
\end{align*}
and we are done.
\end{proof}

\begin{lemma}
\label{basicid2}
Let $i-j\equiv \alpha\pmod{p}$, $r-s\equiv -\alpha\pmod{p}$, $u-v\equiv\alpha\pmod{p}$. Then 
\begin{equation}
\label{dega-aa}
x^iy^j\cdot x^r y^s\cdot x^uy^v = x^uy^v\cdot x^ry^s \cdot x^iy^j, 
\end{equation}
for every $\alpha\in \{0, 1,\ldots, p-1\}$.
\end{lemma}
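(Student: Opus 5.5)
We handle the case $\alpha=0$ first: it follows at once from Lemma~\ref{basicid1}, since all three factors lie in the commutative algebra $A_0$. So assume $\alpha\neq 0$. By linearity in each factor, it suffices to prove that $abc=cba$ for all $a,c\in A_\alpha$ and $b\in A_{-\alpha}$. The plan is to describe $A_0$ and $A_{\pm\alpha}$ explicitly, in analogy with the characteristic $0$ picture $A_n=K[h]z^n$, and then reduce the claim to a short check on a few "skeleton" monomials.

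\emph{Step 1: structure of $A_0$.} Put $h=yx$, so $xy=h-1$. By the usual induction, $x^sy^s$ is a polynomial in $h$. Let $Z_0=K[x^p,y^p]$, which is central. Take a basis element $x^ry^s$ of $A_0$. If $r=s+ap$ with $a\ge 0$, then $x^ry^s=x^{ap}\,x^sy^s$. If instead $s=r+bp$ with $b\ge 0$, then $x^ry^s=x^ry^r\,y^{bp}$. Hence $A_0=Z_0[h]$, which is consistent with Lemma~\ref{basicid1}. The relations $xh=(h-1)x$ and $yh=(h+1)y$ give, for every $f\in A_0$,
\[
x^\alpha f(h)=f(h-\alpha)x^\alpha,\qquad y^{p-\alpha}f(h)=f(h+p-\alpha)y^{p-\alpha}=f(h-\alpha)y^{p-\alpha}.
\]
The last equality holds because we are in characteristic $p$, and the central coefficients are unaffected. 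Write $\sigma(f)(h)=f(h-1)$. Then every $w\in\{x^\alpha,y^{p-\alpha}\}$ satisfies $wf=\sigma^{\alpha}(f)w$. Likewise every $w'\in\{x^{p-\alpha},y^{\alpha}\}$ satisfies $w'f=\sigma^{-\alpha}(f)w'$.

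\emph{Step 2: generators of $A_{\pm\alpha}$.} Let $x^ry^s$ be a basis element of $A_\alpha$. If $r\ge\alpha$, write $x^ry^s=x^\alpha\cdot x^{r-\alpha}y^s$, and the second factor lies in $A_0$. If $r<\alpha$, then $s\equiv r+p-\alpha \pmod p$ forces $s\ge p-\alpha$, and $x^ry^s=x^ry^{s-p+\alpha}\cdot y^{p-\alpha}$, whose first factor lies in $A_0$. Using Step 1 to move the $A_0$-factor to the left in the first case, we get $A_\alpha=A_0x^\alpha+A_0y^{p-\alpha}$. In the same way $A_{-\alpha}=A_0x^{p-\alpha}+A_0y^\alpha$.

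\emph{Step 3: reduction and the main check.} By Step 2 and linearity, we may take $a=fw$, $b=gw'$ and $c=kw''$. Here $f,g,k\in A_0$, the elements $w,w''$ lie in $\{x^\alpha,y^{p-\alpha}\}$, and $w'\in\{x^{p-\alpha},y^\alpha\}$. Pushing all coefficients to the left with Step 1, and noting that $ww'$ and $w''w'$ lie in $A_0$ and so commute with $k$, we get
\[
abc=f\,\sigma^\alpha(g)\,k\;ww'w'',\qquad cba=k\,\sigma^\alpha(g)\,f\;w''w'w.
\]
The three coefficients commute by Lemma~\ref{basicid1}. The whole statement therefore reduces to $ww'w''=w''w'w$, and this skeleton identity is the main obstacle. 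It is trivial when $w=w''$. By symmetry it remains to treat $w=x^\alpha$ and $w''=y^{p-\alpha}$. If $w'=y^\alpha$, both sides equal $x^\alpha y^p=y^px^\alpha$, using that $y^p$ is central. If $w'=x^{p-\alpha}$, both sides equal $x^py^{p-\alpha}=y^{p-\alpha}x^p$, using that $x^p$ is central. This completes the plan.
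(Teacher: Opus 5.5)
Your proof is correct and follows essentially the paper's route: write each factor as an element of $A_0$ (polynomials in $h$, here with coefficients in the central subalgebra $K[x^p,y^p]$) times a skeleton word $x^\alpha$ or $y^{p-\alpha}$ (resp.\ $x^{p-\alpha}$ or $y^\alpha$), push the coefficients to the left, and compare skeletons. Your write-up is more complete than the paper's sketch, which treats only one case (and there, with $r\ge s$, the middle factor should be $p_2(h)x^{p-\alpha}$ rather than $p_2(h)y^\alpha$), whereas you verify the mixed skeleton cases where the centrality of $x^p$ and $y^p$ is actually used; the only slip is cosmetic: in $cba$ it is $w''w'$ that commutes past $f$, not $k$.
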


\begin{proof}
Since $x^p$ and $y^p$ are central elements in $W_1$ we can assume that $i$, $j$, $r$, $s$, $u$, $v$ belong to $\{0,1,\ldots, p-1\}$. Suppose that $i\ge j$, $ r \geq s$, $u \geq v$ (the remaining cases are dealt with analogously). Then $x^i y^j= p_1(h) x^\alpha$, $x^r y^s= p_2(h)y^\alpha$, and $x^uy^v = p_3(h)x^\alpha$. Here $h=yx$, and $p_1$, $p_2$, $p_3$ are polynomials. Then we have 
\[
x^iy^j\cdot x^r y^s\cdot x^uy^v = p_1(h)p_2(h-\alpha)p_3(h) x^\alpha y^\alpha x^\alpha,
\]
and analogously 
\[
x^uy^v\cdot x^ry^s \cdot x^iy^j = p_1(h)p_2(h-\alpha)p_3(h) x^\alpha y^\alpha x^\alpha.
\]
This concludes the proof.
\end{proof}
We denote by $I$ the T-ideal of $\mathbb{Z}_p$-graded identities generated by the identities (\ref{deg0}) and (\ref{dega-aa}):
\begin{equation}
\label{basicidcharp}
z^0_1z_2^0 = z_2^0z_1^0, \qquad z_1^\alpha z_2^{-\alpha}z_3^\alpha = z_3^\alpha z_2^{-\alpha}z_1^\alpha, \quad \alpha\in\mathbb{Z}_p.
\end{equation}
Thus Lemmas~\ref{basicid1} and \ref{basicid2} yield that $I\subseteq T_{\mathbb{Z}_p}(W_1)$. Recall that $T_G(A)$ stands for the ideal of graded identities of the $G$-graded algebra $A$. We want to prove that $I=T_{\mathbb{Z}_p}(W_1)$. In order to achieve this we need several lemmas. For a monomial $m$ we denote by $\deg m$ its $\mathbb{Z}_p$-degree.

Since $K$ is an infinite field we consider multihomogeneous polynomials only. The following lemma is an example of a statement that is longer than its proof. 

\begin{lemma}
\label{generalz}
Let $m=m_1zm_2z\cdots m_kzm_{k+1}$ and $n=n_1zn_2z\cdots n_k zn_{k+1}$ be two monomials in the free $\mathbb{Z}_n$-graded algebra, of the same multidegree, and suppose further that neither of the monomials $m_1$, \dots, $m_{k+1}$, $n_1$, \dots, $n_{k+1}$ contains the variable $z$. Let $m=n$ for every substitution of the variables by elements of $W_1$. Then there exists a permutation $\theta$ of $\{1,2,\ldots, k\}$ such that $\deg(m_1zm_2z\cdots m_i) = \deg(n_{\theta(1)}zn_{\theta(2)}z\cdots n_{\theta(i)})$ for every $i=1$, \dots, $k$.
\end{lemma}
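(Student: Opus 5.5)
The plan is to substitute all variables other than $z$ by fixed elements that do not depend on a parameter, and $z$ by an element carrying a free polynomial $q(h)$, where $h=yx$. Then the positions of the occurrences of $z$ become visible as shifts of $q$. The statement will be read as saying that the multiset of $\mathbb{Z}_p$-degrees of the prefixes $m_1zm_2z\cdots m_i$ (the part of $m$ to the left of the $i$-th occurrence of $z$) coincides with the corresponding multiset for $n$. The permutation $\theta$ is then any bijection matching equal elements of the two multisets.

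Two facts are needed first.
\begin{itemize}
\item $W_1$ is a domain also in characteristic $p$.
\item The commutation rules $x f(h)=f(h-1)x$ and $y f(h)=f(h+1)y$ hold in any characteristic. In characteristic $p$ the shift $h-s$ depends only on $s \bmod p$.
\end{itemize}
Consequently, for any monomial $w$ in $x,y$ of $\mathbb{Z}_p$-degree $s$ (degree of $x$ minus degree of $y$, modulo $p$), we get $w\,f(h)=f(h-s)\,w$.

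Next fix the substitution. Let $d$ be the $\mathbb{Z}_p$-degree of $z$. Choose $d$ with $0\le d\le p-1$ and substitute $z\mapsto q(h)x^d$. Substitute every other variable of degree $c$, $0\le c\le p-1$, by $x^c$. Push all the factors $q(\cdot)$ to the left using the rule above. This gives
\[
m\mapsto \prod_{i=1}^k q(h-s_i)\cdot M,\qquad n\mapsto \prod_{i=1}^k q(h-t_i)\cdot N,
\]
where $s_i=\deg(m_1zm_2z\cdots m_i)$ and $t_i=\deg(n_1zn_2z\cdots n_i)$ are the prefix degrees, read in $\mathbb{F}_p\subseteq K$. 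Here $M$ and $N$ are the values of $m$ and $n$ under the same substitution with $q=1$. They are products of powers of $x$, in fact equal to $x^{e}$ with $e$ the total exponent, and this exponent is the same for both because $m$ and $n$ have the same multidegree. So $M=N\neq 0$.

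Now use the hypothesis that $m=n$ under every substitution and cancel $M=N$, which is allowed since $W_1$ is a domain. This gives the identity
\[
\prod_i q(h-s_i)=\prod_i q(h-t_i)
\]
in $K[h]$ for every $q$. Taking $q(h)=h-\lambda$, the left side is $\prod_i(h-s_i-\lambda)$ and the right side is $\prod_i(h-t_i-\lambda)$. Unique factorization in $K[h]$ then forces the multisets $\{s_i\}$ and $\{t_i\}$ to coincide, which produces $\theta$.

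I expect the main obstacle to be bookkeeping rather than substance: checking that substituting $x^c$ (rather than general homogeneous elements) and moving $q$ past powers of $x$ really produces exactly the shifts $s_i$ modulo $p$. The key point is that the $\mathbb{Z}$-degree shift reduces correctly to the $\mathbb{Z}_p$-degree, which works because the shifts act through $h\mapsto h-s$ with $s\in\mathbb{F}_p$.
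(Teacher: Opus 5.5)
Your proof is correct and is the paper's argument made explicit. The paper likewise substitutes $z$ by $p(h)x^a$, collects the shifted copies $p(h-s)$ that arise, and varies $p$; your choice of $x^c$ for the other variables, then $q(h)=h-\lambda$, then unique factorization in $K[h]$, supplies exactly the details the paper leaves implicit.

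Your reading of the conclusion is the right one: the multisets of prefix degrees $\deg(m_1z\cdots m_i)$ and $\deg(n_1z\cdots n_i)$ coincide. That is what the shift argument yields and what Lemmas~\ref{case1} and~\ref{case2} actually use. The literal indexing $n_{\theta(1)}zn_{\theta(2)}z\cdots n_{\theta(i)}$ fails: take $m=zazb$ and $n=azbz$ with $\deg z=0$, $\deg a=1$, $\deg b=-1$, which agree on $W_1$ because $A_0$ is commutative.
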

\begin{proof}
Applying (partially) Lemma~\ref{canonicalform}, we substitute $z$ by $p(h)z^a$ for some  $a \in \{ 0,1, \ldots , p-1 \}$. Then collecting the ``shifted" polynomials obtained from $p(h)$, and varying $p(h)$, we have the statement.
\end{proof}
Now we employ an idea from \cite{azevedozn}. 

\begin{lemma}
\label{case1}
Suppose that $m$ and $n$ are as in Lemma \ref{generalz}, and that $m=zM_1zM_2$ where $\deg(zM_1)=0$. Then $n\equiv zN\pmod{I}$ for some monomial $N$.
\end{lemma}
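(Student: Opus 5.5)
We follow the approach of Azevedo's argument for $M_n(K)$ with the $\mathbb{Z}_n$-grading: using Lemma~\ref{generalz}, locate inside $n$ an occurrence of $z$ that plays the role of the leading $z$ in $m$, and then move it to the front using the identities (\ref{basicidcharp}). Since $m=zM_1zM_2$ starts with $z$, the prefix preceding the first $z$ in $m$ is empty and has degree $0$. Write $n=n_1zn_2z\cdots n_kzn_{k+1}$ as in Lemma~\ref{generalz}.

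The first step is to extract degree information from Lemma~\ref{generalz}. Substitute $z\mapsto p(h)z^{a}$, with $a=\deg z$, and substitute all other variables by generic homogeneous elements. Comparing the shifts of $p$ in $m$ and $n$, some occurrence of $z$ in $n$ must carry shift $0$. Hence there is a $j$ with $\deg(n_1zn_2z\cdots zn_j)=0$, i.e.\ $n=P\,zQ$ with $\deg P=0$. If $P$ is empty we are done. The hypothesis $\deg(zM_1)=0$ gives more: the occurrence of $z$ at shift $\deg z$ in $m$ must also be matched in $n$. I would use this to arrange $j$ so that $zQ=zQ_1Q_2$ with $\deg(zQ_1)=0$, that is, so that $z$ is followed by a degree-$0$ block. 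I expect this to be the main obstacle: choosing $j$, via the permutation $\theta$ of Lemma~\ref{generalz}, so that both degree conditions hold simultaneously. If no such $j$ exists, the shift patterns of $m$ and $n$ differ, and a suitable substitution contradicts $m=n$, exactly as in Proposition~\ref{balcom}.

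Given $n=P\cdot(zQ_1)\cdot Q_2$ with $\deg P=\deg(zQ_1)=0$, the first identity $z_1^0z_2^0=z_2^0z_1^0$ applied to the two degree-$0$ blocks $P$ and $zQ_1$ gives $n\equiv zQ_1PQ_2\pmod I$. We then take $N=Q_1PQ_2$. When the second degree-$0$ block cannot be found directly, I would fall back on the three-term identity $z_1^\alpha z_2^{-\alpha}z_3^\alpha=z_3^\alpha z_2^{-\alpha}z_1^\alpha$. Split $P=P_1P_2$ with $\deg P_1=\alpha$ and $\deg P_2=-\alpha$, and take the block $zQ_1$ of degree $\alpha$. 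This yields $P_1P_2\,zQ_1\equiv zQ_1\,P_2\,P_1$, which again brings $z$ to the front.

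Finally, the degree bookkeeping in each case should be checked modulo $p$, and it should be noted that the multidegree is preserved throughout, so $N$ is a monomial of the required form.
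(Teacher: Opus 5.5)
Your closing move, commuting two degree-$0$ blocks with $z_1^0z_2^0=z_2^0z_1^0$, is the same as the paper's. The gap is in producing those blocks, which is the whole content of the lemma. You leave that step as an ``expected obstacle'', and it rests on a miscomputed shift.

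In $m=zM_1zM_2$ the second displayed $z$ has prefix $zM_1$. So under $z\mapsto p(h)z^{a}$ its copy of $p$ is shifted by $\deg(zM_1)=0$, not by $\deg z$. The hypothesis therefore says that $m$ has at least two occurrences of $z$ with prefix degree $0$. By Lemma~\ref{generalz}, the occurrences of $z$ in $m$ and in $n$ can be matched with equal prefix degrees, so $n$ also has two such occurrences.

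Write $n=n_1zn_2zn_3$ with $z$ at those two positions. Then $\deg n_1=\deg(n_1zn_2)=0$, so $\deg(zn_2)=0$ automatically. A single swap gives $n\equiv zn_2\,n_1\,zn_3\pmod I$. This is the paper's proof. There is no need to choose $j$ cleverly, and no difficulty in meeting ``both conditions simultaneously''.

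With your value $\deg z$, the argument does not close. Suppose all you know is that $n$ has one occurrence of $z$ at prefix degree $0$ and another at prefix degree $\deg z$. Then the segment between them gives a block $zQ_1$ of degree $\deg z$, not $0$. So your dichotomy (``either such $j$ exists or the shift patterns differ, giving a contradiction'') is asserted rather than proved.

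The fallback via $z_1^\alpha z_2^{-\alpha}z_3^\alpha=z_3^\alpha z_2^{-\alpha}z_1^\alpha$ does not repair this. You give no reason why $P$ splits as $P_1P_2$ with $\deg P_1=\alpha=-\deg P_2$. Nor do you show why a block $zQ_1$ of degree $\alpha$ follows it. That three-term identity is what the paper uses in Lemma~\ref{case2} and Proposition~\ref{case3}, where no second shift-$0$ occurrence of $z$ is available. For the present lemma it is unnecessary once you use both shift-$0$ occurrences of $z$ in $m$.
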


\begin{proof}
It follows from Lemma~\ref{generalz} that we can find submonomials in $n$ such that $n=n_1zn_2zn_3$, and moreover $\deg n_1=\deg(n_1zn_2)=0$. But then $\deg(zn_2)=0$ as well, and by identity~(\ref{basicid1}) we have $n_1\cdot zn_2\equiv zn_2\cdot n_1\pmod{I}$. Hence, modulo $I$ we can assume $n$ starts with $z$. 
\end{proof}

\begin{lemma}
\label{case2}
Let $m$ and $n$ be as in Lemma~\ref{generalz}, and suppose $m$ starts with the variable $z$. Suppose $m=m_1z_1z_2m_2$ and $n=n_1z_1n_2zn_3z_2n_4$ for some monomials $m_1$, $m_2$, $n_1$, $n_2$, $n_3$, $n_4$ (some of these may be empty). Here $z_1$ and $z_2$ are variables. Assume that $\deg m_1=\deg n_1$, and that $\deg(n_1z_1n_2)=0$. Then $n\equiv zN\pmod{I}$. 
\end{lemma}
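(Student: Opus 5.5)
The plan is to use the second family of identities in (\ref{basicidcharp}), namely $z_1^\alpha z_2^{-\alpha}z_3^\alpha\equiv z_3^\alpha z_2^{-\alpha}z_1^\alpha \pmod I$, applied to three consecutive blocks of $n$. The goal is a factorization $n=PQRS$ with $\deg P=\deg R=\alpha$, $\deg Q=-\alpha$, and with $R$ beginning with $z$. Then $PQR\equiv RQP \pmod I$ gives $n\equiv RQPS \pmod I$, and this monomial starts with $z$, as required. Since these identities hold for arbitrary graded substitutions, we may substitute whole monomials of the right degrees for the variables, and no linearization is needed.

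The natural choice is
\[
P=n_1z_1,\qquad Q=n_2,\qquad R=zn_3,\qquad S=z_2n_4 .
\]
Put $\alpha=\deg(zn_3)$. Because $\deg(n_1z_1n_2)=0$, the condition $\deg P=-\deg Q$ holds automatically. So the only thing left to check is that $\deg(n_1z_1)=\alpha$, which then also forces $\deg Q=-\alpha$.

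To get this equality I would compare the positions of $z_2$ in $m$ and in $n$, using Lemma~\ref{generalz} (or, more precisely, the shift argument in its proof) with $z_2$ as the distinguished variable. Substitute $z_2=p(h)z^{\deg z_2}$ and all other variables generically, as in Lemma~\ref{canonicalform}. The shift of $p$ in $m$ is $\deg(m_1z_1)$. The shift in $n$ is $\deg(n_1z_1n_2zn_3)=\deg(n_1z_1n_2)+\deg(zn_3)=\alpha$. Since $m=n$ on $W_1$, these shifts must coincide, so
\[
\alpha=\deg(m_1z_1)=\deg m_1+\deg z_1=\deg n_1+\deg z_1=\deg(n_1z_1),
\]
where the middle step is the hypothesis $\deg m_1=\deg n_1$. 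This is exactly what is needed, and it covers the case $\alpha=0$ as well: there the identity reduces to commuting degree-zero blocks, which is also available from (\ref{deg0}).

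The main obstacle is this shift comparison when $z_2$ occurs several times in the multihomogeneous monomials. Lemma~\ref{generalz} then only gives a permutation $\theta$ matching the multiset of prefix degrees, not the specific occurrences. I would first try to use the adjacency of $z_1z_2$ in $m$, together with the correspondence of $z_1$ encoded by $\deg m_1=\deg n_1$. Choosing $z_2=p(h)z^{\deg z_2}$ with $p$ having roots at all but one prescribed shift lets us isolate the occurrence of $z_2$ following the matched $z_1$, and so identify its prefix degree in $n$ with $\deg(m_1z_1)$. If that fails in general, the fallback is to reduce to the multilinear situation: rename the occurrences apart, apply the argument, and then identify the variables again, which is harmless because $I$ is a graded T-ideal.
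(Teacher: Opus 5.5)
Your core argument is the paper's argument. You use the same blocks $n_1z_1$, $n_2$, $zn_3$, the same identity $z_1^\alpha z_2^{-\alpha}z_3^\alpha\equiv z_3^\alpha z_2^{-\alpha}z_1^\alpha$, and the same bookkeeping $\deg(zn_3)=\deg(n_1z_1n_2zn_3)=\deg(m_1z_1)=\deg(n_1z_1)$. The gap is at the step you flag yourself, and neither of your repairs closes it.

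A substitution $z_2\mapsto p(h)z^{\deg z_2}$ only yields the product $\prod p(h-s)$ over the prefix degrees $s$ of \emph{all} occurrences of $z_2$. So it sees the multiset of these degrees and cannot single out ``the'' occurrence following the matched $z_1$. Renaming occurrences apart fails too, because it need not preserve the hypothesis $m=n$ on $W_1$. In fact the needed equality can fail under the literal hypotheses. Take $p=3$, $\deg z=\deg a=\deg b=1$, $\deg c=2$, $\deg z_1=\deg z_2=0$, and
\[
m=z\,z_1\,z_2\,c\,b\,a\,z_2,\qquad n=b\,z_1\,c\,z\,(z_2a)\,z_2 .
\]
Apply the $\alpha=1$ identity to the blocks $z$, $z_1z_2c$, $b$, then commute $z_2$ with $cz$ (both of degree $0$). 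This gives $m\equiv n\pmod I$, so $m=n$ on $W_1$. With $m_1=z$, $n_1=b$, $n_2=c$, $n_3=z_2a$, $n_4=1$, every stated hypothesis holds. Yet $\deg(n_1z_1)=1\neq 2=\deg(zn_3)$, so the block identity does not apply to this factorization. The conclusion still holds here, via the other occurrence of $z_2$.

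The paper does not derive this equality either. It writes ``we can also suppose that $\deg(n_1z_1n_2zn_3)=\deg(m_1z_1)$ because these are the prefixes of $z_2$.'' In other words, the displayed $z_2$ in $n$ is understood to be an occurrence whose prefix degree equals $\deg(m_1z_1)$. That is exactly how the lemma is invoked in Proposition~\ref{case3}, where $N_3$ is chosen with $\deg N_3=\deg(z_1\cdots z_k)$, an occurrence provided by Lemma~\ref{generalz}. So drop the repair attempts and take that equality as part of the hypothesis; it is automatic when $z_2$ occurs only once. With that, your proof is complete and coincides with the paper's.
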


\begin{proof}
First we note that there exist monomials $n_1$ and $n_2$ such that $\deg(n_1z_1n_2)=0$ because of Lemma~\ref{generalz}, and of $m$ starting with $z$. We can also suppose that $\deg(n_1z_1n_2zn_3) = \deg(m_1z_1)$ because these are the prefixes of $z_2$ in our monomials. We shall prove that $\deg(n_2zn_3)=0$. 

Indeed $\deg(n_1z_1n_2)=0$ implies $\deg n_2=-\deg(n_1z_1)=-\deg(m_1z_1)$. Then we observe that $\deg(zn_3) = \deg(mz_1)$ as these are the prefixes of $z_2$, and because $\deg(n_1z_1n_2)=0$. Therefore
\[
\deg(n_2zn_3) \equiv \deg n_2+\deg(zn_3) \equiv -\deg(m_1z_1) +\deg(zn_3) \equiv 0\pmod{p}.
\]
This proves our claim. But then $\deg(n_1z_1n_2) = \deg(n_2zn_3)=0$ yields 
\[
\deg(n_1z_1)\equiv -\deg n_2 \equiv \deg(zn_3)\pmod{p},
\]
and we apply the identity from Lemma~\ref{basicid2} for $n_1z_1$, $n_2$ and $zn_3$. Thus we obtain, modulo $I$, a monomial that starts with the variable $z$.
\end{proof}

\begin{proposition}
\label{case3}
Let $m$ and $n$ be two multihomogeneous monomials of the same multidegree in $K\langle X\mid\mathbb{Z}_n\rangle$, and suppose that $m=n$ under every evaluation on $W_1$. If $m$ starts with the variable $z$ then $n\equiv zN\pmod{I}$, for some monomial $N$. 
\end{proposition}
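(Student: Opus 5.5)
We combine Lemmas~\ref{generalz}, \ref{case1} and \ref{case2}, inducting on the position at which $z$ first occurs in $n$. Write $m=zM$ and $n=n_1zn'$, where $n_1$ does not contain $z$; if $n_1$ is empty there is nothing to prove. Since $K$ is infinite and the statement concerns multihomogeneous monomials, we first reduce to the multilinear situation for the variable $z$. We rename the occurrences of $z$ as $z^{(1)},\dots,z^{(k)}$, all of the same degree, so that Lemma~\ref{generalz} applies to the first occurrence of $z$. It supplies the degree bookkeeping: the prefixes of $m$ ending just before an occurrence of $z$ have degrees that coincide, up to a permutation $\theta$, with the degrees of the corresponding prefixes of $n$. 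In particular the empty prefix of $m$ has degree $0$, so some prefix of $n$ ending just before an occurrence of $z$ has degree $0$.

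\textbf{Case 1.} Suppose $m=zM_1zM_2$ with $\deg(zM_1)=0$, i.e.\ $z$ recurs in $m$ after a degree-$0$ block. This is exactly Lemma~\ref{case1}. There are two degree-$0$ prefixes before occurrences of $z$ in $n$, namely $n_1$ and $n_1zn_2$. Hence $zn_2$ has degree $0$, and the identity $z_1^0z_2^0=z_2^0z_1^0$ swaps $n_1$ and $zn_2$.

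\textbf{Case 2.} Suppose instead that no such second occurrence exists. Then we compare $m$ and $n$ at the first variable $z_1$ after which they diverge; more precisely, we pick the boundary $z_1z_2$ of $m$ that is "split" in $n$ by an occurrence of $z$. Lemma~\ref{generalz}, applied now to the variables $z_1,z_2$ as well (the shifted copies of their polynomials must match), yields a factorization $n=n_1z_1n_2zn_3z_2n_4$ with $\deg n_1=\deg m_1$ and $\deg(n_1z_1n_2)=0$. Lemma~\ref{case2} then applies: the three-block identity $z_1^\alpha z_2^{-\alpha}z_3^\alpha=z_3^\alpha z_2^{-\alpha}z_1^\alpha$ moves a block beginning with $z$ to the front modulo $I$.

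After one of these moves, either $n$ begins with $z$, or the first occurrence of $z$ has moved strictly left. We then iterate, inducting on the length of $n_1$, using that the value of $n$ on $W_1$ is unchanged modulo $I$ and so the hypothesis $m=n$ persists.

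\textbf{Main obstacle.} The hardest step is the case analysis showing that one of Lemmas~\ref{case1} or \ref{case2} always applies. Concretely, from the equality of all shifted polynomial products we must extract prefixes of $n$ whose degrees satisfy the hypotheses of Lemma~\ref{case2}, namely a degree-$0$ prefix ending after a variable $z_1$ that is adjacent to $z_2$ in $m$. I would try first to choose $z_1$ as the variable immediately preceding, in $m$, the variable that occurs right after the degree-$0$ prefix $n_1$ in $n$. The degree equalities of Lemma~\ref{generalz} should then force the required congruences $\deg n_1=\deg m_1$ and $\deg(n_1z_1n_2)=0$. Beyond this, it remains to check that the iteration terminates, since each step strictly decreases the length of the prefix before the first $z$.
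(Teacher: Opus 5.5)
\textbf{The proposal has a genuine gap.} Your argument depends on the claim that Lemma~\ref{case1} or Lemma~\ref{case2} always applies. You flag this as the main obstacle and leave it unproved, but it is false.

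Take $m=zw$ and $n=wz$ with $\deg z=\deg w=0$. Then $m=n$ on $W_1$ by Lemma~\ref{basicid1}. Yet Lemma~\ref{case1} needs a second occurrence of $z$ in $m$, and Lemma~\ref{case2} needs a letter $z_2$ to the right of $z$ in $n$. Neither applies, so your procedure makes no move.

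The failure is not caused by $\deg z=0$. For $m=zwu$, $n=uzw$ with $\deg z=1$, $\deg w=-1$, $\deg u=0$, no factor $z_1z_2$ of $m$ is separated by $z$ in $n$ either. Your suggested choice of $z_1$ also breaks down: as you set things up, the letter right after the degree-$0$ prefix $n_1$ in $n$ is $z$ itself, whose matching occurrence in $m$ is the first letter and has no predecessor. Insisting on a pair of letters adjacent in $m$ is too rigid. The iteration on $|n_1|$ cannot repair this, since there is no first move.

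\textbf{The missing idea.} The last case of the paper's proof uses it: look directly for a factorization $n=P_1P_2zP_3P_4$ (the $P_i$ possibly empty) with $\deg(P_1P_2)=0$ and $\deg(P_2zP_3)=0$.
If $\deg P_2=0$, then $P_1P_2$ and $zP_3$ both have degree $0$, so $n\equiv zP_3P_1P_2P_4\pmod{I}$.
If $\deg P_2=\beta\neq0$, then $\deg P_1=\deg(zP_3)=-\beta$. The identity $z_1^{\alpha}z_2^{-\alpha}z_3^{\alpha}=z_3^{\alpha}z_2^{-\alpha}z_1^{\alpha}$ with $\alpha=-\beta$ gives $n\equiv zP_3P_2P_1P_4\pmod{I}$.
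So one step suffices.

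\textbf{Why the factorization exists.} The cleanest route uses what Lemma~\ref{generalz} really provides: for every variable $x$, the multisets of prefix degrees at the occurrences of $x$ in $m$ and in $n$ coincide. No renaming of occurrences is needed.

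Write $n=n_1zn_2$ with $\deg n_1=0$ and $n_1\neq1$. Let $S$ be the set of degrees of the prefixes of $n_1$, including the empty one. Suppose no prefix of $n$ longer than $n_1$ had degree in $S$.
Then every occurrence in $n$ of a letter $x$ at a prefix degree $d\notin S$ satisfies $d+\deg x\notin S$. By the multiset equality, the same holds in $m$.
Since $m$ begins with $z$ and $\deg z=\deg(n_1z)\notin S$, all later prefix degrees of $m$ lie outside $S$.
Then exactly one letter of $m$ sits at a prefix degree in $S$, while $n$ has $|n_1|+1\ge2$ such letters. This contradicts the multiset equality.

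Hence some prefix $n_1zP_3$ has the degree of a prefix $P_1$ of $n_1=P_1P_2$. This is the required factorization.
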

\begin{proof}
We write $m=z_1z_2\cdots z_t$ for some $t$, here $z_1=z$; there may be repetitions among the variables of $m$. If $1\le k<t$, then both $z_k$ and $z_{k+1}$ appear in the monomial $n$. Let $n=n_1zn_2$ where $\deg n_1=0$. If $n_1=1$ we are done, so assume $n_1\ne 1$. 

We write $n=N_1z_kN_2 = N_3z_{k+1}N_4$ for some monomials $N_i$, and suppose $\deg N_1=\deg(z_1\cdots z_{k-1})$, and $\deg N_3=\deg(z_1\cdots z_k)$. Now we compare the lengths of $n_1$ and $N_3$. We denote the length of a monomial $m$ by $|m|$.

If $|n_1| = |N_3|$ then $z_{k+1}=z_1=z$, and Lemma~\ref{case1} applies since $\deg n_1=\deg N_3 = 0$. Suppose that $|n_1|>|N_3|$, in this case the variable $z$ in $n$ is situated between $z_k$ and $z_{k+1}$. The remaining conditions of Lemma~\ref{case2} are satisfied and we apply that lemma. Therefore we assume, without loss of generality, that $|n_1|<|N_3|$. 

This means that there is some $s$, $1\le s\le t$ such that for every $k\ge s$ we have that the monomial $z_kz_{k+1}\cdots z_t$ appears in $n_1$ ``shuffled". It suffices to consider only the monomial $z_sz_{s+1}\cdots z_t$, that appears shuffled inside $n_1$. Moreover this shuffling of $z_kz_{k+1}\cdots z_t$ covers all of the monomial $n_1$. In this way we assume that $z_sz_{s+1}\cdots z_t$ and $n_1$ are multihomogeneous monomials and of the same multidegree. 

Let $z_q$ be the first variable in the monomial $n$ (and $n_1$). Then its corresponding $z_q$ cannot be to the left of $z_s$ in the monomial $m$ (otherwise we apply the above argument). Write $m$ as follows:
\[
m=(z_1\cdots z_{s-1}) (z_s \cdots z_{q-1})z_q (z_{q+1}\cdots z_t) = M_1 M_2 z_q M_3,
\]
where the $M_i$ stand for the corresponding monomials enclosed in the parentheses above. But this implies $\deg (M_1M_2)$ = 0 because in $n$, the variable $z_q$ is at the leftmost position. Additionally we have $\deg n_1=\deg (z_sz_{s+1}\cdots z_t) = \deg(M_2z_qM_3) = 0$ since $\deg n_1=0$. 

Now, exchange the roles of $m$ and $n$ in the above argument. Write then $n=n_1zn_2$, then we can decompose further $n_1= P_1P_2$ and $n_2=P_3P_4$ where the $P_i$ are monomials, such that $\deg (P_1P_2) = 0$, and $\deg (P_2zP_3)= 0$ as well. Hence we get $n=P_1P_2zP_3P_4$. 

We have two cases to consider. If $\deg P_2=0$ then $\deg (zP_3)=0$ and $\deg P_1=0$. In this case we apply the graded identity from Lemma~\ref{basicid1}, and put $z$ at the first position of $n$, modulo the identities of $I$. 

If, on the other hand, $\deg P_2\ne 0$, then $\deg P_2=-\deg P_1=-\deg(zP_3)$, and we apply the graded identity from Lemma~\ref{basicid2} to $P_1\cdot P_2\cdot zP_3$, and once again $z$ goes to the leftmost position. The proposition is proved.
\end{proof} 

\begin{theorem}
\label{mainthmcharp}
Let $K$ be an infinite field of characteristic $p>0$, then the ideal of $\mathbb{Z}_n$-graded identities of the Weyl algebra $W_1$ is generated by the identities from (\ref{basicidcharp}).
\end{theorem}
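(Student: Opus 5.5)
We prove $T_{\mathbb{Z}_p}(W_1)=I$ following the scheme of Azevedo's argument for $M_n(K)$ with the $\mathbb{Z}_n$-grading. The inclusion $I\subseteq T_{\mathbb{Z}_p}(W_1)$ is already given by Lemmas~\ref{basicid1} and \ref{basicid2}. For the reverse inclusion, $K$ is infinite, so by Lemma~\ref{multihomgraded}(a) it suffices to treat multihomogeneous graded identities. Let $f=\sum_j \alpha_j m_j$ be a multihomogeneous graded identity, all $m_j$ of the same multidegree. The plan is to show that $f\in I$ by reducing to a statement about pairs of monomials.

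\textbf{Key claim.} If $m$ and $n$ are monomials of the same multidegree that take the same value under every graded substitution into $W_1$, then $m\equiv n\pmod I$. We prove this by induction on the length of $m$. Let $z$ be the first variable of $m$, so $m=zM$. By Proposition~\ref{case3}, $n\equiv zN\pmod I$ for some monomial $N$. Since $I\subseteq T_{\mathbb{Z}_p}(W_1)$, the monomials $zM$ and $zN$ still agree on all substitutions. We need to cancel $z$ and conclude that $M$ and $N$ agree on all substitutions; the induction hypothesis then gives $M\equiv N\pmod I$.

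The cancellation is not automatic, because in characteristic $p$ the algebra $W_1$ is not a domain for arbitrary products. It does hold for suitable substitutions. Substitute for $z$ an element of the form $p(h)x^a$ (or $p(h)y^{a}$), which is left regular. Then use multihomogeneity: $zM-zN=z(M-N)$ vanishes identically, and after specializing $z$ to a non-zero-divisor we get $M-N=0$ for all substitutions of the remaining variables. If $z$ also occurs in $M$, we linearize or argue with the generic substitution. The clean route is to pass to the generic algebra: $W_1$ is a domain modulo its center localization, i.e.\ a central localization of $W_1$ is the matrix algebra $M_p$ over the field of fractions of $K[x^p,y^p]$. Alternatively, replace $W_1$ by its quotient field of fractions of the center, where $W_1$ embeds in a division algebra of degree $p$. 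The element values then lie in a division ring, and $a(b-c)=0$ with $a\neq 0$ forces $b=c$. I expect this cancellation step, namely justifying that $W_1$ in characteristic $p$ embeds into a division algebra, to be the main technical point. It is true because $W_1$ is an Ore domain in every characteristic.

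With the key claim established, we finish by the standard argument. Group the monomials of $f$ into classes of monomials that coincide as functions on $W_1$. By the key claim, all monomials in one class are congruent modulo $I$. Hence $f\equiv \sum_r \beta_r u_r \pmod I$, where the $u_r$ are pairwise functionally distinct. It remains to show that distinct monomial functions are linearly independent, so that all $\beta_r=0$.

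For the independence, evaluate on generic elements: substitute each variable by $p_i(h)z^{a_i}$, as in Lemma~\ref{canonicalform}, with $z^{a}$ taken among $x^a,y^a$ and $0\le a<p$. The coefficient of the resulting element is a product of shifted polynomials $p_i(h-c)$ with shifts $c\in\mathbb{Z}_p$. These shifts are distinct elements of $K$ since $p$ is the characteristic. The value of $u_r$ is then determined by its shift pattern as in Lemma~\ref{generalz}, together with the products of powers $x^\alpha y^\beta$. Choosing the $p_i$ with prescribed roots, exactly as in the proof of Theorem~\ref{mainthm}, separates the distinct patterns and shows that the $\beta_r$ must vanish. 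Thus $f\in I$, which completes the proof.
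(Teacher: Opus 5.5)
Your scheme follows the paper's: use Proposition~\ref{case3} to bring the first variable to the front modulo $I$, cancel it, and induct. Splitting this into a pairwise key claim plus a separation step is the right organization; the paper simply invokes Proposition~\ref{case3} for every monomial of $f$ at once. The gap is in your last step.

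\textbf{The gap.} Under $x_i\mapsto p_i(h)x^{a_i}$, the value of a monomial $u$ is $\prod_{i,c}p_i(h-c)^{e_{i,c}(u)}\,x^{A}$. Here $e_{i,c}(u)$ counts the occurrences of $x_i$ in $u$ whose prefix has $\mathbb{Z}_p$-degree $c$. These substitutions therefore only see the shift pattern $e(u)$. At best they give $\sum_{r\colon e(u_r)=e}\beta_r=0$ for each pattern $e$, not $\beta_r=0$.

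To finish, you need your pairwise functionally distinct $u_r$ to have pairwise distinct shift patterns. Equivalently, two monomials with the same shift pattern must agree under \emph{every} graded substitution into $W_1$. You never prove this, and it does not follow from what you wrote. The elements $p(h)x^a$ are not generic in $A_a$: for instance $x^p\in A_0\setminus K[h]$, and $y^{p-a}\in A_a\setminus K[h]x^a$ for $0<a<p$, since $K[h]x^a$ is spanned by the $x^{a+i}y^i$. So ``the value of $u_r$ is determined by its shift pattern'' has only been checked on a special family of substitutions.

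\textbf{Two ways to close it.}
(i) The proof of Proposition~\ref{case3} uses its hypothesis only through Lemma~\ref{generalz}, i.e.\ only through equality of shift patterns. So state your key claim for monomials with equal shift patterns, and group the monomials of $f$ by shift pattern rather than by functional equality. Congruence modulo $I$ preserves shift patterns, and deleting the leading $z$ from $zM$ and $zN$ keeps their shift patterns equal. Hence no cancellation in $W_1$ is even needed.

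(ii) Alternatively, pass to $D=W_1\otimes_Z\operatorname{Frac}Z$ with $Z=K[x^p,y^p]$. There $x$ is invertible, the degree-zero part is the field $K(h,x^p)$, $A_a\subseteq K(h,x^p)\,x^a$, and $x^a\phi(h,x^p)=\phi(h-a,x^p)x^a$. Hence every graded evaluation of a monomial depends only on its shift pattern.

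In either case, do the separation by interpolation rather than by ``prescribed roots''. Variables now repeat, and several occurrences may carry the same shift.
\begin{itemize}
\item Use only $x^a$ with $0\le a<p$ (not $y^a$), so that all values have the form (polynomial in $h$)$\cdot x^A$ with a common $A$.
\item For fixed $\lambda\in K$, the points $\lambda-c$ with $c\in\mathbb{F}_p$ are distinct, so the values $p_i(\lambda-c)$ can be prescribed independently.
\item A polynomial $\sum_e\gamma_e\prod_{i,c}y_{i,c}^{e_{i,c}}$ that vanishes for all $y_{i,c}\in K$ is zero because $K$ is infinite, so every $\gamma_e=0$.
\end{itemize}

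\textbf{Side remarks.} Two of your side remarks are false, though not fatally. First, $W_1$ \emph{is} a domain in every characteristic: it is the Ore extension $K[x][y;d/dx]$, and the paper uses exactly this. Second, its central localization is a division algebra of degree $p$, not $M_p(\operatorname{Frac}Z)$. The cancellation of $z$ is therefore immediate: if $z\mapsto 0$ and $z$ occurs in $M$, both sides vanish; otherwise you cancel. No linearization is needed, which is fortunate since it would be problematic in characteristic $p$.
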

\begin{proof}
Since $K$ is infinite, $T_{\mathbb{Z}_n}(W_1)$ is generated by its multihomogeneous elements. We want to prove $T_{\mathbb{Z}_n}(W_1) = I$ where $I$ is the T-ideal of graded identities generated by the identities from (\ref{basicidcharp}). Suppose there exists $f(z_1,\ldots, z_n)$ which is an identity for $W_1$ but $f\notin I$, and take such an $f$ of smallest degree. Write $f=\sum \alpha_km_k$ where the $m_i$ are monomials and $\alpha_i\in K$ are scalars. Proposition \ref{case3} gives us that if $z_1$ appears in $f$ then for every $m_i$ we have $m_i\equiv z_1n_i\pmod{I}$ for some monomials $n_i$. Therefore $f=z_1\sum \alpha_in_i$ is a graded identity for $W_1$. But $W_1$ has no zero divisors, hence $\sum\alpha_i n_i$ is a graded identity for $W_1$. Its total degree is less than that of $f$, hence it lies in $I$. But then $f\in I$, a contradiction. 
\end{proof}
We recall here that the latter theorem states that in characteristic $p$, the Weyl algebra $W_1$ satisfies the same $\mathbb{Z}_p$-graded identities as the matrix algebra $M_p(K)$, see \cite{azevedozn} for the description of the graded identities of matrix algebras over infinite fields. At first glance this may seem surprising but it is well known that, at least when $K$ is algebraically closed, $W_1$ in prime characteristic is an Azumaya algebra over its centre. In fact in \cite{lopatin} the authors proved that in positive characteristic $p$, the algebras $M_p(K)$ and $W_1$ satisfy the same (ordinary) polynomial identities as long as $K$ is infinite. It should be observed that the above theorem from \cite{lopatin} can be obtained as a corollary from our theorem~\ref{mainthmcharp}. First we observe a well known and easy folkloric fact which implies immediately that $W_1$ and $M_p(K)$ satisfy the same identities over an infinite field of positive characteristic. 

\begin{lemma}
\label{folklore}
Let $A$ and $B$ be two $G$-graded algebras over an infinite field, both graded by the same group $G$. If $A$ and $B$ satisfy the same graded identities then they satisfy the same ordinary identities. 
\end{lemma}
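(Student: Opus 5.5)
The plan is to pass from an ordinary polynomial to a graded one by splitting every variable into homogeneous pieces, and to use the infinite field to reduce to multihomogeneous components. Let $f(x_1,\ldots,x_n)\in K\langle X\rangle$ be an ordinary identity for $A$; by symmetry it suffices to show that $f$ is an identity for $B$. Since $K$ is infinite, Proposition~\ref{multihomoogeneous} lets us work with multihomogeneous $f$ if convenient, though this will not be essential.

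For each variable $x_i$ and each $g\in G$, consider the graded variable $x_i^{g}$. Fix a finite subset $S\subseteq G$. Define the graded polynomial
\[
\tilde f_S=f\Bigl(\sum_{g\in S}x_1^{g},\ldots,\sum_{g\in S}x_n^{g}\Bigr)\in K\langle X\mid G\rangle ,
\]
where distinct $(i,g)$ give distinct graded variables. Let $a_1,\ldots,a_n\in A$ be arbitrary. Each $a_i$ is a finite sum of homogeneous components, so there is a finite $S$ with $a_i=\sum_{g\in S}a_i^{(g)}$ for all $i$. Substituting $x_i^{g}\mapsto a_i^{(g)}$, which respects the degrees, gives $\tilde f_S\mapsto f(a_1,\ldots,a_n)$.

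The key steps are these. First, if $f\in T(A)$, then for every finite $S$ the polynomial $\tilde f_S$ vanishes under every graded substitution in $A$. This holds because any graded substitution $x_i^g\mapsto c_{i,g}\in A_g$ yields $f$ evaluated at the elements $\sum_g c_{i,g}$. Hence $\tilde f_S\in T_G(A)$. Second, by hypothesis $T_G(A)=T_G(B)$, so $\tilde f_S\in T_G(B)$. Third, given $b_1,\ldots,b_n\in B$, choose $S$ finite containing the supports of all $b_i$ and substitute the homogeneous components. This gives $f(b_1,\ldots,b_n)=0$. Hence $T(A)\subseteq T(B)$, and symmetrically $T(B)\subseteq T(A)$.

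There is no real obstacle. The only point requiring care is that the substitution in the first step is a legitimate graded substitution: distinct pairs $(i,g)$ must give distinct graded variables, and $S$ must be finite so that $\tilde f_S$ is a genuine polynomial. Both hold by construction. Note that the infinitude of $K$ is not actually needed for this argument; it is included only to match the standing hypotheses.
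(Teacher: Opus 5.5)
Your proof is correct. The paper gives no argument for this lemma and simply calls it a well-known folklore fact; what you wrote is the standard argument behind it. That argument shows $f\in T(A)$ if and only if $\tilde f_S=f\bigl(\sum_{g\in S}x_1^{g},\ldots,\sum_{g\in S}x_n^{g}\bigr)\in T_G(A)$ for every finite $S\subseteq G$, so $T(A)$ is determined by $T_G(A)$. Restricting to finite $S$ is what makes this work for the infinite group $G=\mathbb{Z}$ used in the paper's applications, and you are right that neither the infinitude of $K$ nor the reduction to multihomogeneous polynomials is needed.
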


Observe that the reverse does not hold. It suffices to take, for example, the Grassmann (or exterior) algebra $E$ of infinite dimension over an infinite field of characteristic different from 2. 
One can define the trivial $\mathbb{Z}_2$-grading on it, that is the 0-component equals $E$ while the 1-component is 0. On the other hand $E$ has a natural $\mathbb{Z}_2$-grading having various applications. Put $E=E_0\oplus E_1$ where $E_0$ is the centre of $E$ and $E_1$ is the ``anticommuting" part of $E$. It is easy to see that $[x,y]=0$ for every $x$, $y\in E_0$ but this is not an identity for the former grading. The 0-component in the former grading is the whole $E$, and $E$ is not commutative. Other examples of similar nature can be given just as easily.  

\section{Generalizations}
\subsection{$\mathbb{Z}$-graded Galois rings}

In this section we introduce a $\mathbb{Z}$-graded notion of Galois rings \cite{futornyovsienko}. All algebras and fields we consider are $K$-algebras.

Let $L$ be a field and $G$ a group, we denote by $L* G$ a skew group algebra of $G$. Recall that $L*G$ is the vector space $LG$ over $L$ with a basis consisting of the elements of $G$, that is the same vector space as that of the group algebra of $G$. The multiplication in it is ``almost" the usual one but with a twist that we define now. There is an action of $G$ on $L$, that is, a group homomorphism $\varphi\colon G\to Aut(L)$ such that $(a_1g_1) (a_2g_2) = (a_1\varphi(g_1)(a_2)) (g_1g_2)$. Formally, one should write $L*_\varphi G$ but we will omit the subscript $\varphi$ when the action is understood from the context. We observe that $L*G$ is naturally $G$-graded if we assume that the nonzero elements of $Lg$ are of degree $g$ for every $g\in G$.

The following definition is inspired by \cite[Definition 3]{futornyovsienko}. The authors of \cite{futornyovsienko} considered only the case of algebraically closed fields $K$ of zero characteristic. However, in \cite{schwarz} it was shown that with a small modification the definition works over any field. For the sake of simplicity we assume $\operatorname{char} K=0$.

\begin{definition}
    Let $\Gamma$ be an affine integral domain and $U$ a finitely generated associative $\Gamma$-algebra. Let $F$ be the field of fractions of $\Gamma$, $\theta$ a $K$-automorphism of $\Gamma$ of infinite order, extended canonically to a $K$-automorphism of $F$, with the identification $\langle \theta \rangle \simeq \mathbb{Z}$. Then $U$ is called a $\mathbb{Z}$\textit{-graded Galois} $\Gamma$-\textit{ring} in $F*\mathbb{Z}$, if $U$ is a subalgebra of $F*\mathbb{Z}$, $FU=UF=F*\mathbb{Z}$, and $U$ is $\mathbb{Z}$-graded. Moreover we require $\Gamma \subset U_0$, the neutral component in the grading on $U$, and the inclusion $U \hookrightarrow F*Z$ an injective homomorphism respecting the $\mathbb{Z}$-grading, where $F*\mathbb{Z}$ has the natural $\mathbb{Z}$-grading.
\end{definition}

\begin{definition}\label{def-S1}
    We denote by $\mathcal{S}_1$ the algebra $K(h)*\mathbb{Z}$, where the homomorphism $\varphi$ is defined on the free generator $\varepsilon$ of $\mathbb{Z}$ as $\varepsilon(h)=h-1$; we consider this action extended to the field of fractions $K(h)$ of $K[h]$.
\end{definition}

We return to the Weyl algebras $W_1$. Again, we set $h=yx$, and we consider the $\mathbb{Z}$-grading introduced in Lemma \ref{AisZgraded}.

\begin{proposition}\label{weyl-example}
$W_1$ is a $\mathbb{Z}$-graded Galois ring in $\mathcal{S}_1$.
\end{proposition}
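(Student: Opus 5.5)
We need to check each clause of the definition: $\Gamma=K[h]$ is an affine integral domain, $W_1$ is a finitely generated $\Gamma$-algebra, there is a grading-preserving embedding $W_1\hookrightarrow \mathcal{S}_1$, and $FW_1=W_1F=\mathcal{S}_1$ with $F=K(h)$. The automorphism $\theta$ is $h\mapsto h-1$. It has infinite order because $\operatorname{char}K=0$, so $\langle\theta\rangle\simeq\mathbb{Z}$ with $\varepsilon\leftrightarrow\theta$. Finite generation is clear, since $W_1$ is generated by $x,y$ even over $K$. Also $\Gamma=K[h]=A_0$ by Lemma~\ref{AisZgraded}, so $\Gamma\subset U_0$.

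To build the embedding, define $\iota(x)=\varepsilon$ and $\iota(y)=h\varepsilon^{-1}$ in $\mathcal{S}_1$. The multiplication rule gives $\varepsilon f(h)=f(h-1)\varepsilon$ and $\varepsilon^{-1}f(h)=f(h+1)\varepsilon^{-1}$. We verify the defining relation:
\[
\iota(y)\iota(x)=h\varepsilon^{-1}\varepsilon=h,\qquad \iota(x)\iota(y)=\varepsilon h\varepsilon^{-1}=(h-1).
\]
Hence $\iota(y)\iota(x)-\iota(x)\iota(y)=1$, so $\iota$ extends to an algebra homomorphism $W_1\to\mathcal{S}_1$. Moreover $\iota(yx)=h$, so $\iota$ restricts to the identity on $K[h]$. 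It also sends $z^k$ to an element of $K[h]\varepsilon^k$: for $k>0$ we get $\varepsilon^k$, and for $k<0$, using $\varepsilon^{-1}f(h)=f(h+1)\varepsilon^{-1}$, we get $h(h+1)\cdots(h-k-1)\,\varepsilon^{k}$. Consequently $\iota(A_n)=\iota(K[h]z^n)\subseteq K(h)\varepsilon^n$, so $\iota$ respects the $\mathbb{Z}$-gradings.

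For injectivity we use the decomposition $W_1=\bigoplus_n K[h]z^n$. Since $\iota$ is graded, it suffices to check injectivity on each component. On the $n$-th component, $\iota(p(h)z^n)=p(h)c_n(h)\varepsilon^n$, where $c_n\neq0$ is the polynomial computed above. Because $K(h)$ is a field, $p(h)c_n(h)=0$ forces $p=0$. Alternatively, $W_1$ is simple in characteristic $0$ and $\iota\ne0$, which gives injectivity directly.

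It remains to show $FW_1=W_1F=\mathcal{S}_1$. Every element of $\mathcal{S}_1$ is a finite sum $\sum f_n(h)\varepsilon^n$ with $f_n\in K(h)$, so it suffices to produce each $f\varepsilon^n$.
\begin{itemize}
\item For $n\ge0$, we have $f\varepsilon^n=f\cdot\iota(x^n)\in FW_1$.
\item For $n<0$, we have $f\varepsilon^n=\bigl(f\,c_n^{-1}\bigr)\iota(z^n)$, which lies in $FW_1$ because $c_n$ is invertible in $F$.
\end{itemize}
For the other side, write $\iota(z^n)f=f(h-n)\,\iota(z^n)$ as a twisted multiple. Since $\theta$ is an automorphism of $F$, this gives $W_1F\supseteq F\iota(z^n)=F\varepsilon^n$.

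The only step needing real care is the invertibility bookkeeping for $n<0$, where $\iota(y^{m})$ carries a nontrivial polynomial coefficient $c_n$. All remaining steps are routine verification.
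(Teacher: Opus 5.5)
Your proof is correct and uses the same embedding $x\mapsto\varepsilon$, $y\mapsto h\varepsilon^{-1}$ as the paper. The two arguments differ in how they finish.

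\begin{itemize}
\item \textbf{The paper.} It gets injectivity from the simplicity of $W_1$ alone. It does not prove the Galois condition $K(h)W_1=W_1K(h)=\mathcal{S}_1$ itself, but cites \cite[Proposition 3.7]{schwarz}.
\item \textbf{Your proof.} You verify the Galois condition directly. You compute $\iota(y^m)=h(h+1)\cdots(h+m-1)\varepsilon^{-m}$ and invert this coefficient in $F$ to get $FW_1=\mathcal{S}_1$. For $W_1F=\mathcal{S}_1$ you use the twisting rule $\iota(z^n)f=f(h-n)\,\iota(z^n)$ together with the fact that $f\mapsto f(h-n)$ is an automorphism of $F$.
\end{itemize}

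Your componentwise injectivity argument, via the $K[h]$-freeness of $W_1$ and the nonvanishing of $c_n$, does not use simplicity. It therefore carries over unchanged to non-simple degree-one GWAs over a commutative domain, such as the quantum plane. There $y\mapsto a\varepsilon^{-1}$ produces the coefficients $a\,\sigma^{-1}(a)\cdots\sigma^{-(m-1)}(a)$, which are again nonzero in $F$.

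The paper's citation buys brevity and covers the general Galois-order setting in one stroke. Your version is self-contained. You also check the bookkeeping clauses of the definition that the paper leaves implicit: that $\theta$ has infinite order in characteristic $0$, and that $\Gamma=A_0$.
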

\begin{proof}
We have a $K$-algebra homomorphism $\phi$ from $W_1$ to $\mathcal{S}_1$ that sends $x$ to $\varepsilon$ and $y$ to $h \varepsilon^{-1}$. Since $\phi$ is non-zero and $W_1$ is simple, $\phi$ is injective, and hence we can identify $W_1$ and $\phi(W_1)$ to get an embedding $\iota: W_1 \hookrightarrow \mathcal{S}_1$. Moreover, this embedding clearly preserves the $\mathbb{Z}$-grading. To finish, we need to show that $K(h)W_1=W_1K(h)=\mathcal{S}_1$. This follows from \cite[Propostion 3.7]{schwarz}.
\end{proof}

The following proposition is well known, at least in the case of ordinary (without any group grading) polynomial identities. In a sense, the proposition states that if $K$ is infinite, extending the field yields no ``new" identities. This means that we can work over an algebraically closed field, say the algebraic closure of $K$.

\begin{proposition}\label{stable-identities}
Let $A$ be an algebra over an infinite field $K$, and let $f$ be a $\mathbb{Z}$-graded identity for $A$. Then, given any commutative $K$-algebra $C$, $f$ is a $\mathbb{Z}$-graded identity for $A \otimes_K C$. Moreover, every graded identity for $A\otimes_K C$ whose coefficients are in $K$, is a graded identity for $A$.
\end{proposition}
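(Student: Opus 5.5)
Since $K$ is infinite, Lemma~\ref{multihomgraded}\,a) lets us restrict to multihomogeneous graded identities: $T_{\mathbb{Z}}(A)$ is generated as a graded T-ideal by its multihomogeneous elements. The grading on $A\otimes_K C$ is $(A\otimes_K C)_n=A_n\otimes_K C$. It is enough to show that every multihomogeneous $f\in T_{\mathbb{Z}}(A)$ vanishes on $A\otimes_K C$. Indeed, the set of graded identities of $A\otimes_K C$ is itself a graded T-ideal, so if it contains the generators it contains all of $T_{\mathbb{Z}}(A)$.

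So let $f(x_1,\dots,x_m)$ be multihomogeneous, of degree $d_i$ in $x_i$, with $x_i$ of $\mathbb{Z}$-degree $a_i$. A homogeneous element of $A_{a_i}\otimes C$ has the form $\sum_j b_{ij}\otimes c_{ij}$ with $b_{ij}\in A_{a_i}$, and by enlarging the index sets we may use a common range $j=1,\dots,N$. We expand $f$ by multilinearization. Put $y_i=\sum_j \lambda_{ij} x_{ij}$ with new graded variables $x_{ij}$ of degree $a_i$. Then $f(y_1,\dots,y_m)=\sum_{\mathbf{k}} \lambda^{\mathbf{k}} f_{\mathbf{k}}(x_{ij})$, where the $f_{\mathbf{k}}$ are the partial linearizations. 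Since $K$ is infinite, a Vandermonde argument shows that each $f_{\mathbf{k}}$ belongs to the graded T-ideal generated by $f$, so each $f_{\mathbf{k}}$ is a graded identity of $A$. This is the same standard step used for Lemma~\ref{multihomgraded}.

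Next substitute $x_{ij}\mapsto b_{ij}\otimes c_{ij}$. Each monomial of $f_{\mathbf{k}}$ evaluates to (the same monomial in the $b_{ij}$)$\otimes$(the product of the corresponding $c_{ij}$). Because $C$ is commutative, this product of the $c_{ij}$ is the same for every monomial of $f_{\mathbf{k}}$, since all of them have the same multidegree. Hence the evaluation equals $f_{\mathbf{k}}(b_{ij})\otimes c^{\mathbf{k}}$, which is zero. Summing over $\mathbf{k}$ gives the first claim. I expect this bookkeeping to be the main point needing care: commutativity of $C$ combined with the multihomogeneous reduction.

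For the converse, let $g$ have coefficients in $K$ and be a graded identity of $A\otimes_K C$. Here I take $C\neq 0$ as implicitly assumed, for instance $C$ a field extension; for $C=0$ the claim fails. Evaluate $g$ on elements $a_i\otimes 1$ with $a_i\in A_{a_i}$. Because $g$ has coefficients in $K$, we get $g(a_1\otimes1,\dots)=g(a_1,\dots)\otimes 1$. The map $A\to A\otimes_K C$, $a\mapsto a\otimes 1$, is injective because $K\to C$ is injective and $K$ is a field, so $A\otimes_K -$ is exact. Therefore $g(a_1,\dots)=0$, and $g$ is a graded identity of $A$.
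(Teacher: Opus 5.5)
Your proof is correct and is essentially the paper's argument: the paper just defers to the standard ungraded proof (Giambruno--Zaicev, Lemma 1.4.2) combined with the multihomogeneous reduction of Lemma~\ref{multihomgraded}, which amounts to exactly your partial-linearization computation using commutativity of $C$, together with the graded embedding $a\mapsto a\otimes 1$ for the converse. You are right that the converse needs $C$ nonzero and unital, which the statement leaves implicit; the paper only applies it with $C=F$ a field, so this costs nothing.
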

\begin{proof}
    The proof of this fact is the same as the one for nongraded identities, using Lemma \ref{multihomgraded}, cf. \cite[Lemma 1.4.2]{gz}.
\end{proof}

\begin{theorem}\label{identities-Galois-rings}
    Let $U$ be a $\mathbb{Z}$-graded Galois ring in $\mathcal{A}=F*\mathbb{Z}$. Then the $T$-ideal of $\mathbb{Z}$-graded identities satisfies the property $T_{\mathbb{Z}}(U)=T_{\mathbb{Z}}(\mathcal{A})$.
\end{theorem}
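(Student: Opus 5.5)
One inclusion is immediate. Since $U$ embeds in $\mathcal{A}=F*\mathbb{Z}$ as a graded subalgebra, every graded identity of $\mathcal{A}$ holds on the homogeneous elements of $U$. This gives $T_{\mathbb{Z}}(\mathcal{A})\subseteq T_{\mathbb{Z}}(U)$.

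For the reverse inclusion I would use the condition $FU=UF=\mathcal{A}$ together with $\Gamma\subset U_0$, adapting the standard argument that a ring and its ring of fractions satisfy the same identities. Because $K$ is infinite, Lemma \ref{multihomgraded} lets me assume that $f\in T_{\mathbb{Z}}(U)$ is multihomogeneous. I would reduce to the multilinear case as follows: for multihomogeneous $f$, linearise partially, then recover $f$ by identifying variables. Over infinite fields the partial linearisations lie in the same graded T-ideal, and they recover $f$ up to invertible factors. If that is too delicate in general, I would simply use $\operatorname{char}K=0$, which the section assumes, and take $f$ multilinear directly.

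The first key step is to describe the homogeneous components. A homogeneous element of degree $n$ in $\mathcal{A}$ has the form $a\varepsilon^n$ with $a\in F$, where $\varepsilon$ is the generator of $\mathbb{Z}$. From $FU=\mathcal{A}$ and the fact that the grading is respected, I would deduce $F U_n=\mathcal{A}_n=F\varepsilon^n$. Hence every homogeneous $a\varepsilon^n$ can be written as $\sum_j c_j u_j$ with $c_j\in F$ and $u_j\in U_n$. Since $F$ is the field of fractions of $\Gamma$, I can clear denominators and write $a\varepsilon^n=\gamma^{-1}\sum_j g_j u_j$ with $\gamma,g_j\in\Gamma\subset U_0$. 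Thus every homogeneous element of $\mathcal{A}$ of degree $n$ is $\gamma^{-1}u$ with $u\in U_n$ and $\gamma\in\Gamma\setminus\{0\}$.

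The second key step is to evaluate $f(x_1^{a_1},\dots,x_m^{a_m})$ at $b_i=\gamma_i^{-1}u_i$ and move all the $\gamma_i^{-1}$ to the left. In $\mathcal{A}$ we have $u\,c=\theta^{n}(c)\,u$ for $u$ of degree $n$ and $c\in F$, and the twist $\theta$ preserves $\Gamma$. I would therefore replace the denominators by a common denominator $\delta=\prod_{i,k}\theta^{k}(\gamma_i)\in\Gamma$, taken over the finitely many shifts $k$ that can occur. Each $b_i$ can then be written as $b_i=\delta_i'^{-1}u_i'$ in such a way that, in every monomial, the product $b_{\sigma(1)}\cdots b_{\sigma(m)}$ equals $D^{-1}\,u'_{\sigma(1)}\cdots u'_{\sigma(m)}$ for a single $D\in F^{\times}$ independent of $\sigma$. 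The multilinear sum then equals $D^{-1}f(u_1',\dots,u_m')=0$.

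I expect this to be the main obstacle. Unlike the central-denominator situation, the shifts depend on the order of the factors. My remedy would be to make all denominators invariant in the relevant sense. Concretely, I would choose $b_i=u_i\,c_i^{-1}$ and use the fact that $F$ is commutative. Failing that, I would tensor with $C=F$ via Proposition \ref{stable-identities}, note that $U\otimes_K F$ maps onto $\mathcal{A}$ degreewise, and compare the graded identities there. If the common-denominator approach stalls, this base-change route is the alternative I would try.
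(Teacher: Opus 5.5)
Your first inclusion is fine, and so is your first key step: every homogeneous element of $\mathcal{A}$ of degree $n$ is $\gamma^{-1}u$ with $\gamma\in\Gamma\setminus\{0\}$ and $u\in U_n$. The gap is the common-denominator step. Write $b_i=\delta_i^{-1}u_i'$ with $u_i'\in U_{a_i}$, and move the denominators to the left in the monomial indexed by $\sigma$. This gives
\[
b_{\sigma(1)}\cdots b_{\sigma(m)}=\Bigl(\prod_{i=1}^m\theta^{t_i(\sigma)}(\delta_i)\Bigr)^{-1}u'_{\sigma(1)}\cdots u'_{\sigma(m)},
\]
where $t_i(\sigma)$ is the $\mathbb{Z}$-degree of the part of the monomial to the left of $x_i$. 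In general this factor depends on $\sigma$, and no choice makes it uniform. For example, take $W_1\subset\mathcal{S}_1$ and the monomials $x_1^{1}x_2^{0}$ and $x_2^{0}x_1^{1}$ at $b_1=\varepsilon$, $b_2=h^{-1}$. Since $U_0=K[h]$, you need $h\mid\delta_2$, and the two factors agree only if $\theta(\delta_2)=\delta_2$. Both cannot hold, because $K(h)^{\theta}=K$. Putting denominators on the right only trades prefix degrees for suffix degrees.

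Your fallback is exactly the paper's argument: $T_{\mathbb{Z}}(U)=T_{\mathbb{Z}}(U\otimes_K F)$ by Proposition~\ref{stable-identities}, and then $\mathcal{A}$ is claimed to be a homomorphic image of $U\otimes_K F$. That last step does not hold as stated. The map $u\otimes c\mapsto uc$ is a degreewise linear surjection but not multiplicative: $1\otimes F$ is central in $U\otimes_K F$, while $\varepsilon^n c=\theta^n(c)\varepsilon^n$ in $F*\mathbb{Z}$. A linear surjection transfers no identities. For $W_1\subset\mathcal{S}_1$ there is in fact no surjective homomorphism at all, since $W_1\otimes_K K(h)$ is simple with centre $K(h)$, whereas $Z(\mathcal{S}_1)=K$.

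The missing idea is to use the freedom to rescale by $\Gamma$ and to separate the shift patterns with Dedekind's lemma. Let $f=\sum_\sigma\alpha_\sigma x_{\sigma(1)}^{a_{\sigma(1)}}\cdots x_{\sigma(m)}^{a_{\sigma(m)}}$ be multilinear, put $t(\sigma)=(t_1(\sigma),\dots,t_m(\sigma))$ and $a=a_1+\cdots+a_m$. For $c_i\in F$,
\[
f(c_1\varepsilon^{a_1},\dots,c_m\varepsilon^{a_m})=\Bigl(\sum_{t}\beta_t\prod_{i=1}^m\theta^{t_i}(c_i)\Bigr)\varepsilon^{a},\qquad \beta_t=\sum_{\sigma\colon t(\sigma)=t}\alpha_\sigma .
\]
Fix nonzero elements $d_i\varepsilon^{a_i}\in U_{a_i}$; they exist because $FU_{a_i}=F\varepsilon^{a_i}$. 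Since $\Gamma\subset U_0$, also $g_id_i\varepsilon^{a_i}\in U_{a_i}$ for every $g_i\in\Gamma$. So $f\in T_{\mathbb{Z}}(U)$ gives
\[
\sum_t\Bigl(\beta_t\prod_i\theta^{t_i}(d_i)\Bigr)\prod_i\theta^{t_i}(g_i)=0\quad\text{for all }(g_1,\dots,g_m)\in(\Gamma\setminus\{0\})^m.
\]
The maps $g\mapsto\prod_i\theta^{t_i}(g_i)$ are pairwise distinct monoid homomorphisms into $F^\times$, because $\theta^k|_\Gamma\neq\mathrm{id}$ for $k\neq 0$. Dedekind's lemma then forces every $\beta_t\prod_i\theta^{t_i}(d_i)$ to vanish, hence every $\beta_t=0$. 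Consequently the first display vanishes for all $c_i\in F$, that is, $f\in T_{\mathbb{Z}}(\mathcal{A})$. In characteristic $0$ the multilinear case suffices, so this proves the theorem.
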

\begin{proof}
    Recall that $\mathbb{Z}$-identities are preserved under scalar extensions. But $U$ is a $K$-subalgebra of $U \otimes_K F$, and the latter algebra is a scalar extension of $U$. Hence using Proposition \ref{stable-identities}, $T_{\mathbb{Z}}(U)= T_{\mathbb{Z}}(U \otimes_K F)$. By definition, $\mathcal{A}$ is a homomorphic image of $U \otimes_K F$, so $T_{\mathbb{Z}}(\mathcal{A})  \supset T_{\mathbb{Z}}(U)$. On the other hand $U$ embeds into $\mathcal{A}$, therefore $T_{\mathbb{Z}}(\mathcal{A}) \subset T_{\mathbb{Z}}(U)$. Hence we are done.
\end{proof}

\begin{remark}
    The same proof yields a strengthening of \cite[Theorem 6.6]{fhjs} for (ungraded) Galois rings $U$ in a fixed subring of a skew monoid ring $\mathcal{K}$: $T(U)=T(\mathcal{K})$.
\end{remark}
\begin{corollary}\label{identities-S1}
    The algebra $\mathcal{S}_1$ satisfies the same $\mathbb{Z}$-graded identities as $W_1$. In particular, the ideal of its graded identities is generated by the single identity $[x_1^0,x_2^0]=0$. 
 Moreover, the algebra $\mathcal{S}_1$ is not PI.
\end{corollary}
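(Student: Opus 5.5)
The corollary follows by applying Theorem~\ref{identities-Galois-rings} to the Galois ring supplied by Proposition~\ref{weyl-example}, and then reading off the earlier results. I will prove its three assertions in order.

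\emph{Equality of graded identities.} By Proposition~\ref{weyl-example}, $W_1$ is a $\mathbb{Z}$-graded Galois ring in $\mathcal{S}_1=K(h)*\mathbb{Z}$. Here $\Gamma=K[h]$, $F=K(h)$, and $\theta$ is the automorphism $h\mapsto h-1$, which has infinite order in characteristic $0$. Theorem~\ref{identities-Galois-rings} with $U=W_1$ and $\mathcal{A}=\mathcal{S}_1$ then gives $T_{\mathbb{Z}}(\mathcal{S}_1)=T_{\mathbb{Z}}(W_1)$. Before invoking it, I will check that the gradings match: under the embedding $\phi$, $x\mapsto\varepsilon$ has degree $1$ and $y\mapsto h\varepsilon^{-1}$ has degree $-1$. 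Hence $K[h]z^n$ lands in $K(h)\varepsilon^n$, so the embedding respects the $\mathbb{Z}$-gradings, as the definition of a $\mathbb{Z}$-graded Galois ring requires.

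\emph{The basis.} By Corollary~\ref{basischar0}, $T_{\mathbb{Z}}(W_1)$ is generated as a graded T-ideal by $[x_1^0,x_2^0]$. Since the two T-ideals coincide, $T_{\mathbb{Z}}(\mathcal{S}_1)$ has the same single generator.

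\emph{$\mathcal{S}_1$ is not PI.} Suppose $\mathcal{S}_1$ satisfied a nonzero ordinary identity $f(x_1,\ldots,x_n)$. Then $W_1\subseteq\mathcal{S}_1$ would satisfy $f$ as well, since ordinary identities pass to subalgebras. This contradicts the fact, recalled in the introduction, that $W_1$ is not PI in characteristic $0$: it is central simple and infinite-dimensional. One could instead argue purely through the graded language. Replace each variable of $f$ by a sum of fresh graded variables of all degrees occurring in a substitution; the result is a graded identity of $\mathcal{S}_1$, and it is nonzero because it specializes back to $f$. By the above it would then lie in the T-ideal generated by $[x_1^0,x_2^0]$. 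The subalgebra route is cleaner, so I will use it.

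The only step needing care is the verification, in the first assertion, that the embedding respects the gradings so that Theorem~\ref{identities-Galois-rings} applies. Once that is checked, each assertion reduces directly to an earlier result, and I expect no real obstacle.
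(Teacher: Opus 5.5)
For the first two assertions you follow the paper exactly: Proposition~\ref{weyl-example}, Theorem~\ref{identities-Galois-rings}, Corollary~\ref{basischar0}. Your grading check is already contained in Proposition~\ref{weyl-example}.

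For the non-PI claim you take a different and more elementary route. The paper feeds the graded equality into Lemma~\ref{folklore} to get $T(\mathcal{S}_1)=T(W_1)=0$. You instead restrict ordinary identities to the subalgebra $\phi(W_1)\cong W_1$. This needs only the injectivity of $\phi$, not the graded equality. Your argument is shorter for $\mathcal{S}_1$. The paper's has the merit of being the uniform mechanism it reuses for classical GWAs and for $U(\mathfrak{sl}_2)$ (see the remark after Theorem~\ref{mainthmsl2}), where no embedding of $W_1$ is invoked.

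Your subalgebra idea also gives the first assertion without Theorem~\ref{identities-Galois-rings}. The graded embedding yields $T_{\mathbb{Z}}(\mathcal{S}_1)\subseteq T_{\mathbb{Z}}(W_1)$. Conversely, the generator $[x_1^0,x_2^0]$ of $T_{\mathbb{Z}}(W_1)$ is an identity of $\mathcal{S}_1$, because $(\mathcal{S}_1)_0=K(h)$ is commutative; so equality follows from Corollary~\ref{basischar0} alone. This direct argument is worth preferring. The proof of Theorem~\ref{identities-Galois-rings} asserts that $\mathcal{A}$ is a homomorphic image of $U\otimes_K F$, which cannot hold here. A surjective homomorphism would send the central subfield $1\otimes K(h)$ unitally into the centre of $\mathcal{S}_1$, and that centre is $K$.

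The graded alternative you sketch for non-PI is unfinished as written. Lying in the T-ideal generated by $[x_1^0,x_2^0]$ is not by itself a contradiction; you would still need a graded algebra with commutative neutral component, such as $W_1$, on which $f$ fails. Since you discard it, no harm is done.
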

\begin{proof}
    The proof of the first part is combination of Corollary \ref{basischar0}, Proposition \ref{weyl-example}, and Theorem \ref{identities-Galois-rings}. The second statement follows from Lemma~\ref{folklore} and the fact that $W_1$ is not PI in characteristic 0.
\end{proof}

\begin{definition}\label{torus}
    We denote by $\mathcal{D}_1$ the algebra of differential operators on the torus $K^\times$.
\end{definition}

We clearly have $\mathcal{D}_1=\mathcal{D}(K[x,x^{-1}])=\mathcal{D}(K[x]_x)$, where $\mathcal{D}(.)$ denotes the ring of differential operators on a commutative $K$-algebra, and $K[x]_x$ is the localization of the polynomial algebra at the set $\{1, x, \ldots ,x^n, \ldots \}$. The rings of differential operators are compatible with localization \cite[Theorem 15.1.25]{mcconnell}, so $\mathcal{D}_1$ is the localization of $W_1$ at the set $\{1, x, \ldots ,x^n, \ldots \}$.

\begin{proposition}\label{example-torus}
    $\mathcal{D}_1$ is a $\mathbb{Z}$-graded Galois $K[h]$-ring in $\mathcal{S}_1$.
\end{proposition}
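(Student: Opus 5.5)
We model the argument on Proposition~\ref{weyl-example}. The plan is to extend the embedding $\phi\colon W_1\hookrightarrow \mathcal{S}_1$, $x\mapsto \varepsilon$, $y\mapsto h\varepsilon^{-1}$, to the localization $\mathcal{D}_1=W_1[x^{-1}]$. Since $\phi(x)=\varepsilon$ is invertible in $\mathcal{S}_1$, and $\{x^n\}$ is an Ore set in $W_1$ (as the localization statement from \cite{mcconnell} implies), the universal property of Ore localization gives a unique extension $\tilde\phi\colon\mathcal{D}_1\to\mathcal{S}_1$ with $x^{-1}\mapsto\varepsilon^{-1}$. For injectivity, we note that every element of $\mathcal{D}_1$ has the form $x^{-n}w$ with $w\in W_1$. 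If $\tilde\phi(x^{-n}w)=0$, then $\phi(w)=0$, so $w=0$ by the injectivity of $\phi$.

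Next we fix the $\Gamma$-ring structure and the grading. We take $\Gamma=K[h]$ with $h=yx$. Then $\tilde\phi(h)=h\varepsilon^{-1}\varepsilon=h$, and $\theta=\varepsilon$ acts on $\Gamma$ by $h\mapsto h-1$, an automorphism of infinite order in characteristic $0$. We grade $\mathcal{D}_1$ by declaring $x^{\pm1}$ of degree $\pm1$, $y$ of degree $-1$ and $h$ of degree $0$. This is a grading because $\mathcal{D}_1$ is generated by $W_1$ and $x^{-1}$, and all defining relations ($yx-xy=1$, $xx^{-1}=x^{-1}x=1$) are homogeneous. Concretely, the component $(\mathcal{D}_1)_n$ is spanned by $K[h]x^n$, because $y=hx^{-1}$ in $\mathcal{D}_1$; in particular $(\mathcal{D}_1)_0=K[h]\supseteq\Gamma$. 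Under $\tilde\phi$ this component maps into $K[h]\varepsilon^n\subseteq K(h)\varepsilon^n$, so $\tilde\phi$ respects the gradings, with $\mathcal{S}_1$ carrying its natural grading.

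It remains to check that $\mathcal{D}_1$ is finitely generated over $\Gamma$ (it is generated by $x,x^{-1},y$) and that $K(h)\mathcal{D}_1=\mathcal{D}_1K(h)=\mathcal{S}_1$. For the latter, the image of $\mathcal{D}_1$ contains $\varepsilon^n$ for every $n\in\mathbb{Z}$, so $K(h)\mathcal{D}_1\supseteq\bigoplus_n K(h)\varepsilon^n=\mathcal{S}_1$. On the right, $\varepsilon^nK(h)=K(h)\varepsilon^n$, since $\varepsilon^n f(h)=f(h-n)\varepsilon^n$, so $\mathcal{D}_1K(h)=\mathcal{S}_1$ as well. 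This step is actually easier than the one for $W_1$, which needed \cite{schwarz}. Alternatively, it follows from $W_1\subseteq\mathcal{D}_1$ together with Proposition~\ref{weyl-example}.

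The main obstacle is the rigorous justification that $\tilde\phi$ is well defined and that $\{x^n\}$ is an Ore set; this rests on the cited compatibility of differential operators with localization. Everything else reduces to the formula $\varepsilon^n f(h)=f(h-n)\varepsilon^n$.
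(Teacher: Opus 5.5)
Your argument is correct. It uses the same embedding as the paper (whose displayed $x^{-1}\mapsto\varepsilon^{1}$ is a typo for $\varepsilon^{-1}$, which you have right), but you justify its key properties differently.

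\textbf{Injectivity.} The paper argues that $\mathcal{D}_1$ is simple, being a localization of the simple ring $W_1$, so the nonzero map is injective. You instead write elements as $x^{-n}w$ and reduce to injectivity of $\phi$ on $W_1$. Your argument is just as short and more robust: it works for any Ore localization once the base map is injective and sends the Ore set to units, with no simplicity of the localized ring required.

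\textbf{The condition $K(h)\mathcal{D}_1=\mathcal{D}_1K(h)=\mathcal{S}_1$.} The paper simply cites \cite[Proposition 3.7]{schwarz}. You verify it by hand from $\varepsilon^{\pm n}\in\tilde\phi(\mathcal{D}_1)$ and $\varepsilon^nK(h)=K(h)\varepsilon^n$.

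\textbf{What the paper leaves implicit.} You also make explicit the well-definedness of the extension, via the universal property of Ore localization. You likewise make explicit the grading $(\mathcal{D}_1)_n=K[h]x^n$ and the check that all remaining clauses of the definition hold ($\Gamma=(\mathcal{D}_1)_0$, finite generation, $\theta$ of infinite order, grading preserved). So your version is self-contained where the paper's is shorter by outsourcing.

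\textbf{Two small comments on your remarks.} First, the $W_1$ case does not genuinely need \cite{schwarz} either. Indeed $\phi(y^m)=(h\varepsilon^{-1})^m=h(h+1)\cdots(h+m-1)\,\varepsilon^{-m}$ has coefficient invertible in $K(h)$, so $\varepsilon^{-m}\in K(h)W_1$. Second, the Ore property of $\{x^n\}$, which you flag as the main obstacle, is routine: $\operatorname{ad}x$ is locally nilpotent on $W_1$, and $x$ is regular since $W_1$ is a domain.
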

\begin{proof}
The $K$-algebra homomorphism $\phi: \mathcal{D}_1 \rightarrow \mathcal{S}_1$ that is given by $x \mapsto \varepsilon$, $x^{-1} \mapsto \varepsilon^{1}$ and $y \mapsto h \varepsilon^{-1}$, where $h=yx$ as before, induces an embedding $\mathcal{D}_1 \hookrightarrow \mathcal{S}_1$ like in the proof of Proposition \ref{weyl-example} (note that $\mathcal{D}_1$ is simple, being a localization of the simple ring $W_1$). We conclude that it is a $\mathbb{Z}$-graded Galois $K[h]$-ring by using once again \cite[Proposition 3.7]{schwarz}.
\end{proof}

\begin{corollary}\label{identities-torus}
    Let $\mathfrak{D}_1$ be the algebra of differential operators on the $1$-torus $K^\times$, with its natural $\mathbb{Z}$-grading. Then it has a basis for its $\mathbb{Z}$-graded polynomial identities $[x_1^0,x_2^0]$.
\end{corollary}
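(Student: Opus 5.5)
The plan is to deduce Corollary~\ref{identities-torus} from results already established, in the same way as Corollary~\ref{identities-S1}. By Proposition~\ref{example-torus}, $\mathcal{D}_1$ is a $\mathbb{Z}$-graded Galois $K[h]$-ring in $\mathcal{S}_1=K(h)*\mathbb{Z}$. Theorem~\ref{identities-Galois-rings} then gives $T_{\mathbb{Z}}(\mathcal{D}_1)=T_{\mathbb{Z}}(\mathcal{S}_1)$. Proposition~\ref{weyl-example} together with Theorem~\ref{identities-Galois-rings} gives $T_{\mathbb{Z}}(\mathcal{S}_1)=T_{\mathbb{Z}}(W_1)$, which is Corollary~\ref{identities-S1}. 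Finally, Corollary~\ref{basischar0} says that $T_{\mathbb{Z}}(W_1)$ is generated by $[x_1^0,x_2^0]$. Chaining these equalities yields the claim.

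Before applying this chain, I will check that the ``natural $\mathbb{Z}$-grading'' on $\mathcal{D}_1$ in the statement is the one induced from $\mathcal{S}_1$ via the embedding $\phi$ of Proposition~\ref{example-torus}. That embedding sends $x\mapsto\varepsilon$, $x^{-1}\mapsto\varepsilon^{-1}$ and $y\mapsto h\varepsilon^{-1}$. So $x$ has degree $1$, $x^{-1}$ has degree $-1$, and $y$ has degree $-1$, with $h=yx$ in degree $0$. This agrees with the grading of Lemma~\ref{AisZgraded} on $W_1\subset\mathcal{D}_1$, extended to the localization. Concretely, $(\mathcal{D}_1)_n=K[h]x^n$ for all $n\in\mathbb{Z}$. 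This uses the relation $y=hx^{-1}$ in $\mathcal{D}_1$ and the fact that $x^{-1}$ twists $K[h]$ by the inverse shift.

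As a sanity check independent of the Galois-ring machinery, I can also argue directly. Since $(\mathcal{D}_1)_0=K[h]$ is commutative, $[x_1^0,x_2^0]$ is an identity of $\mathcal{D}_1$. Conversely, $W_1$ is a graded subalgebra of $\mathcal{D}_1$, so $T_{\mathbb{Z}}(\mathcal{D}_1)\subseteq T_{\mathbb{Z}}(W_1)=\langle [x_1^0,x_2^0]\rangle^{T}$. This containment alone already gives the result, using only Corollary~\ref{basischar0}.

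The main obstacle, if any, is the compatibility of gradings: confirming that the embedding of Proposition~\ref{example-torus} is graded and that $W_1\hookrightarrow\mathcal{D}_1$ respects degrees. Once the degrees of the generators are identified, this is immediate, since all the relevant maps are defined on homogeneous generators.
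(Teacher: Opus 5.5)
Your main chain is exactly the paper's proof (Proposition~\ref{example-torus} with Theorem~\ref{identities-Galois-rings} to pass to $\mathcal{S}_1$, Proposition~\ref{weyl-example} to return to $W_1$, then Corollary~\ref{basischar0}). Your grading check is correct, including $x^{-1}\mapsto\varepsilon^{-1}$, where the paper's $\varepsilon^{1}$ is a typo.

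What you call a ``sanity check'' is, however, a genuinely different and better proof. Write $I$ for the graded T-ideal generated by $[x_1^0,x_2^0]$. Commutativity of $(\mathcal{D}_1)_0=K[h]$ gives $I\subseteq T_{\mathbb{Z}}(\mathcal{D}_1)$, and restriction to the graded subalgebra $W_1$ gives $T_{\mathbb{Z}}(\mathcal{D}_1)\subseteq T_{\mathbb{Z}}(W_1)=I$. This avoids both the density statement $K(h)\mathcal{D}_1=\mathcal{S}_1$ and the nontrivial inclusion $T_{\mathbb{Z}}(U)\subseteq T_{\mathbb{Z}}(\mathcal{A})$ of Theorem~\ref{identities-Galois-rings}. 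That matters: the paper justifies this inclusion by regarding $\mathcal{A}$ as a homomorphic image of $U\otimes_K F$, and this fails already for $W_1\subseteq\mathcal{S}_1$. Indeed, $W_1\otimes_K K(h)$ is simple with centre $K(h)$, so any surjection from it would be an isomorphism, whereas $\mathcal{S}_1$ has centre $K$. The same sandwich applied to $W_1\subseteq\mathcal{S}_1$, whose neutral component $K(h)$ is commutative, also gives the first part of Corollary~\ref{identities-S1} directly.

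What the Galois-ring framework is meant to buy is uniformity for algebras containing no graded copy of $W_1$. Take a classical GWA $K[h](a,\sigma)$ with $\deg a\ge 2$, such as $U(\mathfrak{sl}_2)/(\Omega-\lambda)$ in Theorem~\ref{mainthmsl2}. For $X=px$ and $Y=qy$ one gets $YX-XY=\sigma^{-1}(g)-g$ with $g=p\,\sigma(q)\,\sigma(a)$. Since $g$ is either $0$ or of degree at least $2$, $YX-XY\neq 1$, so your sandwich does not transfer there. For $\mathcal{D}_1$, though, I would lead with your direct argument.
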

\begin{proof}
Combination of Corollary \ref{basischar0}, Proposition \ref{example-torus}, and Theorem \ref{identities-Galois-rings}.
\end{proof}

\begin{corollary}
In characteristic 0, the algebra $\mathcal{D}_1$ satisfies no ordinary polynomial identities.
\end{corollary}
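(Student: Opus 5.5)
The plan mirrors the proof of Corollary~\ref{identities-S1}: first establish that $\mathcal{D}_1$ and $W_1$ have the same $\mathbb{Z}$-graded identities, and then pass to ordinary identities via Lemma~\ref{folklore}.

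By Corollary~\ref{identities-torus}, the ideal $T_{\mathbb{Z}}(\mathcal{D}_1)$ is generated by $[x_1^0,x_2^0]$. By Corollary~\ref{basischar0}, the same single identity generates $T_{\mathbb{Z}}(W_1)$. Both algebras are graded by the same group $\mathbb{Z}$ over an infinite field, since a field of characteristic $0$ is infinite. Therefore $T_{\mathbb{Z}}(\mathcal{D}_1)=T_{\mathbb{Z}}(W_1)$, and Lemma~\ref{folklore} gives $T(\mathcal{D}_1)=T(W_1)$.

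In characteristic $0$ the algebra $W_1$ satisfies no nonzero ordinary identity. This is recalled in the introduction: $W_1$ is central simple of infinite dimension, so by Kaplansky's theorem it cannot be PI. Hence $T(\mathcal{D}_1)=0$, which proves the claim.

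Since Lemma~\ref{folklore} is stated without proof, I would record its short argument. Let $f(x_1,\ldots,x_n)$ be an ordinary identity. Write each $x_i$ as a finite sum $\sum_g x_i^{g}$ of graded variables and expand. The resulting graded polynomial vanishes on $A$ exactly when $f$ does, and it is a graded identity of $A$ if and only if it is one of $B$. This step is immediate.

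A more direct alternative avoids Lemma~\ref{folklore} entirely. There are inclusions $W_1\subset\mathcal{D}_1\subset\mathcal{S}_1$, and $W_1$ is not PI, so any subalgebra argument shows $\mathcal{D}_1$ is not PI. Concretely, any ordinary identity of $\mathcal{D}_1$ restricts to an identity of its subalgebra $W_1$ and hence is zero.

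The only point needing care is confirming that $W_1$ sits inside $\mathcal{D}_1$ as a subalgebra. This holds because $\mathcal{D}_1$ is a localization of the domain $W_1$, so the localization map is injective. This last argument is probably the cleanest proof, and there is no real obstacle.
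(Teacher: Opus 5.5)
Your proposal proves the corollary, but only your second argument works as written.

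\textbf{The first route.} It is exactly the paper's proof (Corollary~\ref{identities-torus} plus Lemma~\ref{folklore}). The basis statements it rests on, Corollaries~\ref{basischar0} and~\ref{identities-torus}, are false.

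Take $u_1=p_1(h)x$, $u_2=p_2(h)y$, $u_3=p_3(h)x$ in $W_1$. From $xf(h)=f(h-1)x$ and $yf(h)=f(h+1)y$ you get $u_1u_2u_3=p_1(h)p_2(h-1)p_3(h)(h-1)x=u_3u_2u_1$. So $x_1^1x_2^{-1}x_3^1-x_3^1x_2^{-1}x_1^1$ is a graded identity of $W_1$. It is likewise one of $\mathcal{D}_1$, since $q_1\varepsilon\, q_2\varepsilon^{-1}\, q_3\varepsilon=q_1(h)q_2(h-1)q_3(h)\varepsilon$ in $\mathcal{S}_1$.

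It is not a consequence of $[x_1^0,x_2^0]$. The multilinear part of that graded T-ideal in these three variables is spanned by elements $m_1[w_1,w_2]m_2$, where $w_1,w_2$ are nonempty monomials of degree $0$ in disjoint sets of variables. No two disjoint nonempty subsets of $\{x_1^1,x_2^{-1},x_3^1\}$ both have degree $0$. The faulty step is the end of the proof of Proposition~\ref{balcom}: equal shifts do not force a balanced commutation (compare Lemma~\ref{basicid2}).

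The equality $T_{\mathbb{Z}}(\mathcal{D}_1)=T_{\mathbb{Z}}(W_1)$ that you feed into Lemma~\ref{folklore} is still true. On $W_1$, $\mathcal{D}_1$ and $\mathcal{S}_1$ alike, a multilinear graded polynomial is an identity exactly when, for each assignment of shifts to the variables, the coefficients of the monomials realizing that assignment sum to $0$. Here the shift of a variable is the degree of the part of the monomial preceding it. However, ``both are generated by the same single identity'' is not a valid justification for that equality.

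A smaller point: for the infinite group $\mathbb{Z}$, your sketch of Lemma~\ref{folklore} must let the finite set of degrees range over all finite subsets of $\mathbb{Z}$. An ordinary $f$ lies in $T(A)$ if and only if every such expansion lies in $T_{\mathbb{Z}}(A)$.

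\textbf{The alternative route.} This argument is correct, genuinely different from the paper's, and is the one to present. The steps are:
\begin{enumerate}
\item $W_1$ embeds in $\mathcal{D}_1$, since localizing the domain $W_1$ at the Ore set $\{x^n\}$ is injective (alternatively, $W_1$ is simple).
\item Every ordinary identity of $\mathcal{D}_1$ restricts to one of $W_1$.
\item $T(W_1)=0$ by Kaplansky's theorem, since $W_1$ is simple, infinite-dimensional, with centre $K$.
\end{enumerate}
It needs only this inclusion and the non-PI property of $W_1$. So it bypasses graded identities, Theorem~\ref{identities-Galois-rings}, and the incorrect basis results altogether. The graded route would additionally show that $\mathcal{D}_1$ and $W_1$ have the same graded identities, which the corollary does not require.
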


\begin{proof}
Follows immediately from Lemma~\ref{folklore} and Corollary~\ref{identities-torus}. 
\end{proof}

\subsection{Generalized Weyl algebras}

Generalized Weyl algebras were introduced by V. V. Bavula in his PhD thesis, and first appeared in the paper \cite{bavula}. For a survey on these algebras and historical remarks about the development of such notions, see for example \cite{gaddis}. 

\begin{definition}\label{finite-GWA}
    Let $D$ be a ring, and $\sigma=(\sigma_1, \ldots, \sigma_n)$ an  $n$-tuple of commuting automorphisms: $\sigma_i \sigma_j = \sigma_j \sigma_i$, $i$, $j=1$, \dots, $n$. Let $a=(a_1,\ldots,a_n)$ be an $n$-tuple of non-zero elements belonging to the centre of $D$, such that $\sigma_i(a_j)=a_j$ whenever $j \neq i$. The \emph{generalized Weyl algebra} (GWA) $D(a, \sigma)$ of degree $n$ and base ring $D$ is generated over $D$ by $X_i^+$, $X_i^-$, $i=1$, \dots, $n$, satisfying the following relations
    \begin{subequations}\label{eq:GWA-relations}
    \begin{gather}
    X_i^+(d)= \sigma_i(d) X_i^+, \qquad  X_i^- d= \sigma_i^{-1}(d) X_i^-,\quad  d \in D,\\
    [X_i^+, X_j^+]=[X_i^-,X_j^-]=[X_i^+, X_j^-]=0, \quad i\neq j,\\
    X_i^- X_i^+ = a_i, \qquad X_i^+  X_i^- = \sigma_i(a_i).
    \end{gather}
    \end{subequations}
\end{definition}

The following proposition is important. Its proof can be found in  \cite[Proposition 1.3]{bavula}.

\begin{proposition}\label{GWA-noetherian-domain}
Let $D(a,\sigma)$ be a degree $n$ GWA with base ring $D$. If $D$ is left or right Noetherian, so is $D(a,\sigma)$. If $D$ is a domain, so is $D(a,\sigma)$.  
\end{proposition}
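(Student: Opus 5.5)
The plan is to realize $A=D(a,\sigma)$ as a $\mathbb{Z}^n$-graded ring and to reduce both claims to properties of $D$ and of the homogeneous components. The first step is to establish a normal form. Put $v_k=(X_k^+)^{k}$ for $k>0$, $v_0=1$ and $v_k=(X_k^-)^{-k}$ for $k<0$, and for $\alpha\in\mathbb{Z}^n$ set $v_\alpha=v_{\alpha_1}\cdots v_{\alpha_n}$ (each factor built from the $i$-th generators). I would show that $A=\bigoplus_{\alpha\in\mathbb{Z}^n} Dv_\alpha$, and that each $Dv_\alpha=v_\alpha D$ is a free left and right $D$-module of rank one. Spanning follows from the relations by pushing elements of $D$ to the left and cancelling $X_i^-X_i^+=a_i$ and $X_i^+X_i^-=\sigma_i(a_i)$. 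Freeness is the standard part. I would realize $A$ as an iterated GWA of degree one, $D_i=D_{i-1}(a_i,\sigma_i)$, where the hypotheses $\sigma_i(a_j)=a_j$ and the commutation of the $\sigma_i$ let $\sigma_i$ extend to $D_{i-1}$. In degree one, freeness is obtained by constructing a faithful representation on $\bigoplus_{k\in\mathbb{Z}} D e_k$ (the usual diamond-lemma or explicit-module check). This also gives the grading $A_\alpha=Dv_\alpha$ with $A_\alpha A_\beta\subseteq A_{\alpha+\beta}$, and the product $v_\alpha v_\beta=c_{\alpha,\beta}v_{\alpha+\beta}$ with $c_{\alpha,\beta}$ a product of $\sigma$-translates of the $a_i$.

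By the reduction to degree one, it suffices to treat $n=1$ and then iterate, since the hypotheses pass to $D_{i-1}$: it is again Noetherian, or a domain, by induction.

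For the domain property with $n=1$, I use the $\mathbb{Z}$-grading. Given nonzero $u,w$, I take their top-degree components $d v_k$ and $e v_l$. The top component of $uw$ is $d\,\sigma^{k}(e)\,c_{k,l}v_{k+l}$ (with $\sigma^{k}$ read appropriately for negative $k$). Since $D$ is a domain, $\sigma$ is an automorphism, and each $a$-translate in $c_{k,l}$ is nonzero, this coefficient is nonzero, and freeness gives $uw\neq0$.

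For the Noetherian property with $n=1$, I would follow the Hilbert basis theorem argument separately on the positive and negative parts. The subring $R_+=\bigoplus_{k\ge0}Dv_k$ is a homomorphic image of the skew polynomial ring $D[X;\sigma]$, which is left Noetherian by the skew Hilbert basis theorem; similarly $R_-$ is an image of $D[X;\sigma^{-1}]$. Now $A=R_++R_-$ as $D$-modules. Given a left ideal $L$ of $A$, I would consider the ideal of leading coefficients of elements of $L$ in each degree direction, and use ascending chains in $D$ to produce finite generation as in the classical proof. Alternatively, $A$ is generated as a left $R_+$-module by $1,X^-$ modulo… it is cleaner simply to run the leading-coefficient argument with respect to both the highest and the lowest degree. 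The right Noetherian case is symmetric, using $Dv_k=v_kD$.

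The main obstacle I expect is the freeness and normal form of the degree-one GWA, together with the verification that the iterated description matches Definition~\ref{finite-GWA}. The Noetherian argument with two-sided degrees is the other delicate point; I would try the leading-coefficient chains in both directions first.
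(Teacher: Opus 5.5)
The paper gives no proof of its own here; it only cites Bavula [Proposition 1.3]. Your proposal therefore has to stand on its own, and most of it does. The normal form $A=\bigoplus_\alpha Dv_\alpha$ via the module $\bigoplus_k De_k$ is correct. So is the reduction to degree one: the extension of $\sigma_i$ to $D_{i-1}$ fixing $X_j^{\pm}$, $j<i$, is well defined because the $\sigma$'s commute and $\sigma_i(a_j)=a_j$, and $\sigma_j(a_i)=a_i$ makes $a_i$ central in $D_{i-1}$. The leading-term proof that $A$ is a domain is also complete. That is exactly where the hypothesis $a\neq 0$ enters; for $a=0$ one has $YX=0$.

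The Noetherian step, however, is still only a plan, and its first idea should be dropped. Left multiplication by $R_+$ never lowers the $\mathbb{Z}$-degree, so $A$ is not a finitely generated $R_+$-module, and $A=R_++R_-$ gives nothing by itself. The two-sided leading-coefficient argument does work, but you have to supply three things.

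(i) For $m\ge 0$ let $T_m$ be the left ideal of those $d\in D$ for which $L$ contains $dv_m+(\text{terms of degree}<m)$. Since $X(dv_m+\cdots)=\sigma(d)v_{m+1}+\cdots$, it is the chain $\sigma^{-m}(T_m)$, not $T_m$, that ascends (alternatively, use right coefficients). So it stabilizes at some $N$, and finitely many $g_j\in L$ realize generators of $T_0,\dots,T_N$.

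(ii) Cancelling top terms with elements $dX^rg_j$ reduces every $u\in L$, modulo $\sum_jAg_j$, to an element of $L$ supported in negative degrees. Then use bottom-coefficient ideals $S_m$ (the chain $\sigma^{m}(S_m)$ ascends) and elements $dY^rg'_j$ to cancel bottom terms. Multiplication by $Y^r$ only lowers degrees, so everything stays in degrees $\le B'$, where $B'\ge 0$ bounds the degrees of the $g'_j$. You end in $L\cap\bigoplus_{0<k\le B'}Dv_k$.

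(iii) That intersection is a submodule of a finitely generated free left $D$-module, hence finitely generated since $D$ is left Noetherian.

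A shorter standard route avoids this bookkeeping. Filter $A$ by $F_m=\bigoplus_{|k|\le m}Dv_k$. In $\operatorname{gr}A$ one has $\bar X\bar Y=\bar Y\bar X=0$, so $\operatorname{gr}A$ is a factor of the Ore extension $D[X;\sigma][Y;\tau]$ with $\tau|_D=\sigma^{-1}$ and $\tau(X)=X$. Hence $\operatorname{gr}A$ is Noetherian, and therefore so is $A$. Alternatively, $A\cong D[X;\sigma][Y;\tau,\delta]/(YX-a)$ with $\delta(D)=0$ and $\delta(X)=a-\sigma(a)$; this $\delta$ is a $\tau$-derivation because $a$ is central.
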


If the degree of the GWA $D(a,\sigma)$ is 1, we write $x=X_1^+$ and $y=X_1^-$. Then it has a $\mathbb{Z}$-grading defined as follows: $D(a,\sigma) =\bigoplus_{n \in \mathbb{Z}} D z^n$, where $z^n=x^n$ if $n \geq 0$ and $z^n=y^{-n}$ if $n <0$.

\begin{proposition}\label{GWA-Galois-rings}
Let $A=D(a,\sigma)$ be a degree $1$ GWA with a base ring $D$ that is an affine commutative domain and $\sigma$ an isomorphism of infinite order. Then $A$ is $\mathbb{Z}$-graded Galois $D$-ring in $F*\mathbb{Z}$, where $F = \operatorname{Frac} D$ and $\varepsilon \in \mathbb{Z}$ acts on $F$ by the natural extension of $\sigma$ from $D$ to its field of fractions $F$.
\end{proposition}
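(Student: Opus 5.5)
We construct an explicit graded embedding of $A=D(a,\sigma)$ into $F*\mathbb{Z}$ and then verify the Galois condition $FA=AF=F*\mathbb{Z}$ directly. The model is the proof of Proposition~\ref{weyl-example}. Write $\varepsilon$ for the generator of $\mathbb{Z}$ acting on $F$ by $\sigma$, so that $\varepsilon d=\sigma(d)\varepsilon$. We define $\phi\colon A\to F*\mathbb{Z}$ by $\phi(d)=d$ for $d\in D$, $\phi(x)=\varepsilon$ and $\phi(y)=a\varepsilon^{-1}$.

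\textbf{Well-definedness.} We check the defining relations of Definition~\ref{finite-GWA} for $n=1$. First, $\varepsilon d=\sigma(d)\varepsilon$. Next, $a\varepsilon^{-1}d=a\sigma^{-1}(d)\varepsilon^{-1}=\sigma^{-1}(d)a\varepsilon^{-1}$, using that $D$ is commutative. For the last two relations,
\[
\phi(y)\phi(x)=a\varepsilon^{-1}\varepsilon=a, \qquad \phi(x)\phi(y)=\varepsilon a\varepsilon^{-1}=\sigma(a).
\]
Hence $\phi$ is a homomorphism. It maps $Dz^n$ into $F\varepsilon^n$: for $n\ge 0$ we get $D\varepsilon^n$, and for $n<0$ we get $D\cdot a\sigma^{-1}(a)\cdots\sigma^{n+1}(a)\,\varepsilon^{n}$, which lies in $D\varepsilon^n$ up to the ordering convention. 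So $\phi$ respects the $\mathbb{Z}$-gradings, and $\phi|_D$ is the identity, so $D\subset A_0$.

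\textbf{Injectivity.} This is the main point where the Weyl argument does not transfer, since $A$ need not be simple. We use the grading instead. The kernel of $\phi$ is a graded ideal, so it suffices to show that $\phi$ is injective on each component $Dz^n$. On that component $\phi(dz^n)=d\,c_n\varepsilon^n$, where $c_n=1$ for $n\ge0$ and $c_n$ is a product of $\sigma$-translates of $a$ for $n<0$. Each such translate is nonzero because $a\neq 0$ and $\sigma$ is an automorphism. Since $D$ is a domain and $F*\mathbb{Z}$ is a free left $F$-module on the $\varepsilon^n$, the map is injective on each component, so $\phi$ is injective. We also use that $A$ is a free left $D$-module on the $z^n$; this is the standard fact behind the grading stated before Proposition~\ref{GWA-Galois-rings}.

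\textbf{The Galois condition.} We need $FA=AF=F*\mathbb{Z}$. For $n\ge0$ we have $\varepsilon^n\in A$, hence $F\varepsilon^n\subset FA$. For $n<0$, $\phi(y^{-n})=c\,\varepsilon^n$ with $c\in D\setminus\{0\}$, so $\varepsilon^n=c^{-1}\phi(y^{-n})\in FA$. On the right, $\varepsilon^n=\phi(y^{-n})\sigma^{-n}(c)^{-1}$ for a suitable translate, so $\varepsilon^n\in AF$. Finally, $F\varepsilon^n=\varepsilon^nF$ because $\sigma$ is an automorphism of $F$, which gives both equalities.

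\textbf{Remaining hypotheses.} $D$ is an affine domain, and $\sigma$ extends uniquely to $F$ with infinite order. $A$ is finitely generated over $D$ by $x$ and $y$. Thus $A$ is a $\mathbb{Z}$-graded Galois $D$-ring in $F*\mathbb{Z}$. Alternatively, one could cite \cite[Proposition 3.7]{schwarz} for the condition $FA=AF$ as in the Weyl case, but the direct check above suffices.
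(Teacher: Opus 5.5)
Your proposal is correct and uses exactly the paper's graded embedding $d\mapsto d$, $x\mapsto\varepsilon$, $y\mapsto a\varepsilon^{-1}$. The paper only asserts that this map is an embedding and cites \cite[Proposition 7.6]{schwarz} for the Galois property, whereas you verify everything by hand: injectivity componentwise via the grading (the right substitute for the simplicity argument used for $W_1$, since a GWA need not be simple), and $FA=AF=F*\mathbb{Z}$ by inverting the nonzero coefficient $a\sigma^{-1}(a)\cdots\sigma^{n+1}(a)$ of $\phi(y^{-n})$, so your version is self-contained.
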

\begin{proof}
    The map from $A$ to $F*\mathbb{Z}$ that sends $D$ to $D$, $x$ to $\varepsilon$ and $y$ to $a \varepsilon^{-1}$ is an embedding of $A$ in $F*Z$, which preserves the $\mathbb{Z}$-grading of the algebras and realizes $A$ as a $\mathbb{Z}$-graded Galois $D$-order in $F*Z$ by \cite[Proposition 7.6]{schwarz}.
\end{proof}

We put $D=K[h]$, the polynomial ring in one variable. 
Then the algebra $D(a,\sigma)=K[h](a,\sigma)$ is called \textit{classical} whenever $\sigma(h)=h-1$, and \textit{quantum} if $\sigma(h)=qh$, for some $q \in K^\times$. In most of the applications one usually assumes as well that $q \neq \pm 1$. It is well known that, up to an isomorphism, every degree 1 GWA of the form $K[h](a,\sigma)$ that is not commutative is either classical or quantum, see \cite[Proposition 2.1.1]{richardsolotar}. 

\begin{remark}
    Sometimes one allows the base ring to be $K[h^\pm]$ for generalized Weyl algebras of quantum type. We observe that in this case the results are similar to those of the classical type.
\end{remark}

Defining, as in Section \ref{sec:graded-identities-weyl-char-0}, $z^0=1$, $z^n=x^n$ if $n \ge 0$, and $z^n=y^{-n}$ if $n <0$, we have that $A=K[h](a,\sigma)$ is $\mathbb{Z}$-graded, $A=\bigoplus_{k \in \mathbb{Z}} A_k$, where $A_k=K[h] z^k$.

\begin{corollary}\label{identities-classical-gwa}
    If $A$ is a degree $1$ GWA of classical type, the basis for its ideal of $\mathbb{Z}$-graded polynomial identities is $[x_1^0,x_2^0]=0$. Furthermore $A$ is not PI. 
\end{corollary}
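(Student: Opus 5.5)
The plan is to reduce to the Galois ring machinery and the already-computed identities of $W_1$ and $\mathcal{S}_1$. Let $A=K[h](a,\sigma)$ be of classical type, so $\sigma(h)=h-1$ and $a\in K[h]$ is nonzero. The automorphism $\sigma$ has infinite order, since $\sigma^n(h)=h-n\neq h$ for $n\neq 0$ in characteristic $0$. The base ring $D=K[h]$ is an affine commutative domain. Therefore Proposition~\ref{GWA-Galois-rings} applies: $A$ is a $\mathbb{Z}$-graded Galois $K[h]$-ring in $F*\mathbb{Z}$, where $F=K(h)$ and the generator $\varepsilon$ acts by $h\mapsto h-1$. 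This skew group algebra is exactly $\mathcal{S}_1$ of Definition~\ref{def-S1}.

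One check is needed here. The $\mathbb{Z}$-grading on $A$ given by $A_k=K[h]z^k$ must agree with the grading pulled back along the embedding $x\mapsto\varepsilon$, $y\mapsto a\varepsilon^{-1}$. Since $z^k$ maps to $\varepsilon^k$ for $k\ge0$, and to a polynomial multiple of $\varepsilon^{k}$ for $k<0$, each $A_k$ lands in $K(h)\varepsilon^k$, so the embedding is graded.

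Next I apply Theorem~\ref{identities-Galois-rings}, which gives $T_{\mathbb{Z}}(A)=T_{\mathbb{Z}}(\mathcal{S}_1)$. By Corollary~\ref{identities-S1}, $T_{\mathbb{Z}}(\mathcal{S}_1)=T_{\mathbb{Z}}(W_1)$, and this ideal is generated by $[x_1^0,x_2^0]$ by Corollary~\ref{basischar0}. Hence $[x_1^0,x_2^0]$ is a basis of the graded identities of $A$.

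For the non-PI statement, $A$ and $W_1$ are $\mathbb{Z}$-graded algebras with the same graded identities. Lemma~\ref{folklore} then gives that they have the same ordinary identities. Since $W_1$ is not PI in characteristic $0$, neither is $A$.

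The only delicate point is whether Proposition~\ref{GWA-Galois-rings} and the cited result from \cite{schwarz} really require nothing more about $a$, such as a nonvanishing condition. Here $a\neq0$ is part of Definition~\ref{finite-GWA}, and $A$ is a domain by Proposition~\ref{GWA-noetherian-domain}. If injectivity of the embedding is in doubt, I would argue it directly. A nonzero homogeneous element $d z^k$ maps to $d\varepsilon^k$ or to $d\,a\sigma^{-1}(a)\cdots\varepsilon^k$, and both are nonzero because $K(h)$ is a field. Since the map is graded, injectivity on homogeneous components suffices.
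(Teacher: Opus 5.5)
Your proposal is correct and is essentially the paper's proof. Like the paper, you realize $A$ as a $\mathbb{Z}$-graded Galois $K[h]$-ring in $\mathcal{S}_1$ via Proposition~\ref{GWA-Galois-rings} and conclude with Theorem~\ref{identities-Galois-rings} and Corollary~\ref{identities-S1}, with the non-PI part coming from Lemma~\ref{folklore}. Your additional checks are also correct: that $\sigma$ has infinite order, and that the embedding $x\mapsto\varepsilon$, $y\mapsto a\varepsilon^{-1}$ is graded and injective. The paper leaves these details implicit.
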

\begin{proof}
$A$ is a $\mathbb{Z}$-graded Galois $K[h]$-ring in $\mathcal{S}_1$ by Proposition \ref{GWA-Galois-rings}. The result follows from Theorem \ref{identities-Galois-rings} and Corollary \ref{identities-S1}.
\end{proof}

\subsection{Quantized Weyl algebra and the quantum plane at non roots of unity}

In this subsection we assume that $\operatorname{char} K =0$. However, we point out that the definition of the quantum plane and its realization as a GWA do not require any assumption on the base field, as shown in the proof of Proposition \ref{quantum-plane}.

Let $q \in K$ be a parameter different from $0$ or $1$. The \textit{quantum Weyl algebra} $W_1(q)$ is the quotient of the free algebra $K \langle x, y \rangle$ by the relation $yx - q xy =1$.

\begin{proposition}
The quantum Weyl algebra $W_1(q)$ can be realized as a quantum generalized Weyl algebra as follows: $W_1(q) \simeq K[h](a,\sigma)$ where $a=h-1$ and $\sigma(h)=qh$.
\end{proposition}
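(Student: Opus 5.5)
The plan is to write down explicit mutually inverse algebra homomorphisms between $W_1(q)$ and the degree~$1$ GWA $K[h](a,\sigma)$, with $a=h-1$ and $\sigma(h)=qh$. Both algebras are given by generators and relations: $W_1(q)$ by $x,y$ subject to $yx-qxy=1$, and the GWA by $K[h]$, $X^+$, $X^-$ subject to the relations (\ref{eq:GWA-relations}) with $n=1$. So it suffices to define each map on generators, check that the defining relations are respected, and check that both composites are the identity on generators. We use only that $q\neq 0,1$.

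The naive choice $x\mapsto X^+$, $y\mapsto X^-$ does not work. If $a=\alpha h+\beta$, then for either assignment of $x,y$ to $X^\pm$ the expression $yx-qxy$ is a constant multiple of $\beta$. With $\beta=-1$, the assignment $x\mapsto X^-$ gives
\[
X^+X^--qX^-X^+=\sigma(a)-qa=(qh-1)-q(h-1)=q-1 .
\]
We therefore rescale and define $\Phi\colon W_1(q)\to K[h](a,\sigma)$ by $x\mapsto X^-$ and $y\mapsto (q-1)^{-1}X^+$. Then $\Phi(yx-qxy)=(q-1)^{-1}(q-1)=1$, so $\Phi$ is a well-defined homomorphism.

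In the other direction, since $a=X^-X^+$ should correspond to $(q-1)xy$, we set $h\mapsto 1+(q-1)xy$, $X^-\mapsto x$ and $X^+\mapsto (q-1)y$, which defines $\Psi$. The substitution $h\mapsto 1+(q-1)xy$ gives a homomorphism on $K[h]$ because $K[h]$ is a polynomial ring. We must check the GWA relations in $W_1(q)$, using only $yx=1+qxy$:
\begin{itemize}
\item[(i)] $X^+h=\sigma(h)X^+$, i.e. $y(1+(q-1)xy)=(q+q(q-1)xy)y$. Expanding $yxy=(1+qxy)y$ makes both sides equal to $qy+q(q-1)xy^2$.
\item[(ii)] $X^-h=\sigma^{-1}(h)X^-$, i.e. $x(1+(q-1)xy)=q^{-1}(1+(q-1)xy)x$. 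Expanding $xyx=x(1+qxy)$ makes both sides equal to $x+(q-1)x^2y$.
\item[(iii)] $X^-X^+=(q-1)xy=\Psi(h)-1$.
\item[(iv)] $X^+X^-=(q-1)yx=(q-1)(1+qxy)$, which equals $q\Psi(h)-1$.
\end{itemize}

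Finally, $\Psi\Phi$ fixes $x$ and $y$ directly. For $\Phi\Psi$, it fixes $X^\pm$, and it sends $h$ to $1+(q-1)\cdot X^-(q-1)^{-1}X^+=1+a=h$. Hence $\Phi$ and $\Psi$ are inverse isomorphisms. The only real obstacle is locating the correct normalization, namely swapping the roles of $x,y$ relative to $X^\pm$ and including the scalar $(q-1)^{-1}$. Once that is fixed, the remaining steps are the routine verifications (i)--(iv).
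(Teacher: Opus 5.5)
Your proof is correct and complete. The paper gives no argument of its own here; it simply cites \cite[Example 2.2]{gho}. You instead verify the isomorphism directly with mutually inverse homomorphisms.

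Checks (i)--(iv) and both composites go through. You correctly rely on the universal properties of the two presentations. In (i)--(ii) it suffices to check the single generator $h$, since $K[h]$ is generated by $h$ and the relations then propagate to all $d\in K[h]$.

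Your $\Psi(h)=1+(q-1)xy$ is just $yx-xy$, the standard normal element of $W_1(q)$, which satisfies $y(yx-xy)=q(yx-xy)y$. The scalar $(q-1)^{-1}$ is exactly what makes $a=h-1$ on the nose rather than $a=(h-1)/(q-1)$. Compared with the citation, your version makes this normalization explicit and shows that only $q\neq 0,1$ is needed, with no assumption on the characteristic. The citation is shorter but leaves the identification implicit.

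One small inaccuracy in your motivating remark: for the assignment $x\mapsto X^+$, $y\mapsto X^-$ one gets $yx-qxy\mapsto a-q\sigma(a)=\alpha(1-q^2)h+\beta(1-q)$. This is not a constant multiple of $\beta$ unless $\alpha=0$ or $q=-1$. Only the assignment $x\mapsto X^-$, $y\mapsto X^+$ gives the constant $\sigma(a)-qa=\beta(1-q)$. Since that remark is purely heuristic, the proof itself is unaffected.
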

\begin{proof}
This is the contents of \cite[Example 2.2]{gho}.
\end{proof}

If $q \in K$ is as before, the \textit{quantum plane} is the quotient of the free algebra $K \langle x, y \rangle$ by the relation $yx - q xy =0$. It is usually denoted as $K_q[x,y]$. 

\begin{proposition}\label{quantum-plane}
The quantum plane $K_q[x,y]$ can be realized as a quantum generalized Weyl algebra of the form $K[h](a,\sigma)$ with $a=h$ and $\sigma(h)=qh$. 
\end{proposition}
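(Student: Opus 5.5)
We need to show $K_q[x,y]\simeq K[h](a,\sigma)$ with $a=h$ and $\sigma(h)=qh$. The plan is to write down explicit homomorphisms in both directions and check that they are mutually inverse on generators. By Definition~\ref{finite-GWA} in degree $1$, the algebra $B=K[h](h,\sigma)$ is generated over $K[h]$ by $X=X^+$ and $Y=X^-$. The defining relations are
\[
Xd=\sigma(d)X,\qquad Yd=\sigma^{-1}(d)Y,\qquad YX=h,\qquad XY=\sigma(h)=qh .
\]
In particular $B$ is generated as a $K$-algebra by $X$ and $Y$, since $h=YX$.

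First, define $\psi\colon K\langle x,y\rangle\to B$ by $x\mapsto Y$ and $y\mapsto X$. Then $\psi(yx-qxy)=XY-qYX=qh-qh=0$, so $\psi$ factors through $K_q[x,y]$. It is surjective because $X$, $Y$ and $h=YX$ lie in the image. (If one prefers $x\mapsto X$, $y\mapsto Y$, the same computation works with $\sigma(h)=q^{-1}h$; the two choices differ only by renaming $q$, so we keep the assignment that matches the stated $\sigma$.)

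Second, construct the inverse $\chi\colon B\to K_q[x,y]$. Send $h\mapsto xy$, $X\mapsto y$ and $Y\mapsto x$, and verify the GWA relations in $K_q[x,y]$. It suffices to check them for $d=h$, since $\sigma$ is an algebra automorphism of $K[h]$. We have $YX=h\mapsto xy$, which holds by definition. Next, $XY\mapsto yx=qxy=\chi(qh)$. For the commutation relations, $Xh\mapsto y\,xy=(yx)y=q\,xy\,y=\chi(\sigma(h)X)$. Similarly $Yh\mapsto x\,xy$, while $\chi(\sigma^{-1}(h)Y)=q^{-1}xy\,x=q^{-1}x(qxy)=x\,xy$. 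Hence $\chi$ is a well-defined $K$-algebra homomorphism. This uses $q\neq 0$ so that $\sigma$ is invertible; no assumption on the characteristic is needed, as the paper notes. It remains to see that $\chi\circ\psi$ and $\psi\circ\chi$ are the identity on generators: $x\mapsto Y\mapsto x$, $y\mapsto X\mapsto y$, $X\mapsto y\mapsto X$, $Y\mapsto x\mapsto Y$, and $h\mapsto xy\mapsto YX=h$. So the two maps are mutually inverse isomorphisms.

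The main obstacle is making sure the presentation of the GWA is the one used in the paper, i.e.\ that $B$ is exactly $K[h]\langle X,Y\rangle$ modulo the listed relations with no hidden extra relations. With that presentation, well-definedness of $\chi$ is the only real content, and it reduces to the checks on $h$ above. Alternatively, one could use the basis $\{h^iz^n\}$ of $B$ together with the basis $\{x^iy^j\}$ of $K_q[x,y]$ (diamond lemma) to see that $\psi$ maps a basis onto a basis; I would use that only as a backup.
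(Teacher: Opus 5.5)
Your proof is correct, and it differs from the paper's at the one step that carries real content. The paper uses the same map you call $\chi$ (it sends $X^-\mapsto x$, $X^+\mapsto y$, $h\mapsto xy$). It checks the GWA relations to obtain a surjection $K[h](h,\sigma)\to K_q[x,y]$, and then proves injectivity indirectly. Both algebras have Gelfand--Kirillov dimension $2$, and the GWA is a domain by Bavula's result. So a nonzero kernel would be an ideal containing a regular element, and by McConnell--Robson the quotient would have GK dimension at most $1$, a contradiction.

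You instead observe that $X,Y$ already satisfy the quantum-plane relation in $B$, since $XY=\sigma(h)=qh=qYX$. The presentation of $K_q[x,y]$ then gives you the inverse map $\psi$ directly, and checking $\chi\circ\psi$ and $\psi\circ\chi$ on generators finishes the argument.

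What each route buys:
\begin{itemize}
\item Your route is shorter, self-contained and manifestly characteristic-free, which is the point the paper stresses just before this subsection.
\item The paper's route needs three external inputs: the GK dimension of each algebra, the domain property of the GWA, and the drop of GK dimension modulo an ideal containing a regular element. In exchange, it never constructs a map out of $K_q[x,y]$. That kind of argument is useful when no explicit inverse is available, but here one is.
\end{itemize}

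Your concern about ``hidden relations'' is not an extra hypothesis. The paper's own construction of $\widehat{\phi}$ relies on exactly the same universal-property reading of Definition~\ref{finite-GWA}.
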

\begin{proof}
    There is a map $\phi: K \langle X^+, X^-, h \rangle \rightarrow K_q[x,y]$ that sends $X^-$ to $x$, $X^+$ to $y$, and $h$ to $xy$. Therefore $\phi (X^+ h) = y xy=q xyy=\phi(\sigma(h)X^+)$; similarly $\phi (X^- h)= \phi(\sigma^{-1}(h) X^-)$. Finally, it is clear that $\phi(X^- X^+)=\phi(h)$ and $\phi(X^+ X^-)=\phi(\sigma(h))$. Hence we have an induced map $\widehat{\phi}:K[h](a,\sigma) \rightarrow K_q[x,y]$. Since $x$, $y$ are in the image of this map, it is surjective. We note that $\operatorname{GKdim} K_q[x,y]=2$ \cite[Lemma II.7.9]{brown}, and that $\operatorname{GKdim} K[h](h,\sigma) =2$ as well \cite[Lemma 3.2]{zmz}. By Proposition \ref{GWA-noetherian-domain}, $K[h](h,\sigma)$ is a domain. We show that $\widehat{\phi}$ is injective, and hence an isomorphism. Otherwise, its kernel would be a non-zero two sided ideal $I$, which is essential (i.e., intersects every other non-zero one sided ideal non-trivially) since $K[h](h,\sigma)$ is a domain, such that $K[h](h,\sigma)/I \simeq K_q[x,y] $. But the right hand side has Gelfand-Kirillov 2, and the left hand one, by \cite[8.3.2(v), 8.3.6(i)]{mcconnell} has Gelfand-Kirillov dimension $1$, as $I$ is an essential ideal. This is absurd. Hence $\widehat{\phi}$ is an isomorphism.
\end{proof}

In more concrete terms, set $A=K_q[x,y]$. Then $A=\bigoplus_{n \in \mathbb{Z}} A_n$, where $A_0=K[xy]$, $A_n = K[xy]y^n$ if $n \geq 0$, $A_n = K[xy]x^{-n}$ if $n <0$. A little bit different of the Weyl algebra case, in view of Proposition \ref{quantum-plane}, we set, for an indeterminate $t$, $t^n=y^n$, if $n \geq 0$, and $t^n=x^{-n}$ if $n<0$.

\begin{remark}
Proposition \ref{quantum-plane} and hence the grading described in the above paragraph take place also if $q$ is a root of unity.    
\end{remark}

We will now determine the ideal of $\mathbb{Z}$-graded identities of the quantum plane $K_q[x,y]$ when $\operatorname{char} K=0$ and $q$ is not a root of unity. In this situation, every identity follows from  multilinear ones, according to Lemma \ref{multihomgraded}.

\begin{lemma}\label{identities-degree-0-quantum-plane}
The quantum plane $K_q[x,y]$ satisfies the graded identity $[x_1^0,x_2^0]=0$.
\end{lemma}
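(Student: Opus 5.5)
The plan is to identify the degree $0$ component of the $\mathbb{Z}$-grading on $K_q[x,y]$ described just before the statement and to check that it is a commutative subalgebra; the identity then follows at once. By the paragraph after Proposition~\ref{quantum-plane}, the grading is $A=\bigoplus_{n\in\mathbb{Z}}A_n$ with $A_0=K[xy]$. This is the image of the base ring $D=K[h]$ under the isomorphism $\widehat{\phi}\colon K[h](h,\sigma)\to K_q[x,y]$, where $h\mapsto xy$.

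Concretely, a graded substitution of $x_1^0$ and $x_2^0$ in $A$ sends them to elements $a_1,a_2\in A_0$. Hence $a_1=p_1(xy)$ and $a_2=p_2(xy)$ for some polynomials $p_1,p_2\in K[t]$. Both elements are polynomials in the single element $xy$ of the algebra, so $a_1a_2=a_2a_1$, and therefore $[x_1^0,x_2^0]$ vanishes under every graded substitution.

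To make the argument airtight, one can alternatively argue inside the GWA $K[h](h,\sigma)$. There the degree $0$ component is exactly $D=K[h]$, because the homogeneous components are $Dz^n$ and $z^0=1$. Since $D$ is commutative, the identity holds there, and it transfers to $K_q[x,y]$ because $\widehat{\phi}$ is a graded isomorphism: it maps $Dz^n$ onto $A_n$, sending $X^+\mapsto y$ and $X^-\mapsto x$, which matches the convention $t^n=y^n$ for $n\ge0$ and $t^n=x^{-n}$ for $n<0$.

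The only point needing care is that $A_0$ is precisely $K[xy]$ and contains no further elements such as $y^nx^n$ outside $K[xy]$. This is immediate, since $y^nx^n$ is a scalar multiple of $(xy)^n$ by the relation $yx=qxy$, so it already lies in $K[xy]$. I expect no real obstacle; the proof is analogous to Lemma~\ref{basicidentity}.
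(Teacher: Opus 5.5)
Your proof is correct and follows the same route as the paper: the degree $0$ component of $K_q[x,y]$ is the polynomial algebra $K[xy]$ (the image of $K[h]$), which is commutative, so $[x_1^0,x_2^0]$ vanishes under every graded substitution. Your extra check that $y^nx^n$ is a scalar multiple of $(xy)^n$ simply makes explicit what the paper treats as evident.
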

\begin{proof}
    This is clear, since the degree $0$ component of the quantum plane is the commutative algebra $K[h]$.
\end{proof}

Now we follow almost literally the arguments given in Proposition \ref{balcom}, Corollary \ref{maincor}, Theorem \ref{mainthm} and Corollary \ref{basischar0} above.

For two monomials $u$, $v$ from $K\langle X\mid\mathbb{Z} \rangle$ we write $u \sim v$ in the same sense as in Definition \ref{def:balcom}. Recall that $u\sim v$ means that $v$ can be obtained from $u$ by means of a sequence of balanced commutations. In other words we can swap the positions of two consecutive submonomials in case both of them are of homogeneous degree 0. Let $I$ be the $T_\mathbb{Z}$-ideal generated by the identity $[x_1^0, x_2^0]=0$.

Let $u$, $v$ be two monomials in $K\langle X \mid \mathbb{Z} \rangle$. We write $u \equiv v$ if $u$ and $v$ always coincide when evaluated on $K_q[x,y]$, respecting the $\mathbb{Z}$-gradings on both algebras.

\begin{proposition}
\label{balcom2}
In the notation above, assume $q$ is not a root of unity. Then $u\equiv v$ for every choice of the polynomials $p_i(h)\in K[h]$ if and only if $u\sim v$. 
\end{proposition}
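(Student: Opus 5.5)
We mirror the proof of Proposition~\ref{balcom}, replacing the additive shift $h\mapsto h-a$ by the multiplicative shift $h\mapsto q^{a}h$. First we record the commutation rule in $A=K_q[x,y]$. Since $A\simeq K[h](h,\sigma)$ with $\sigma(h)=qh$ (Proposition~\ref{quantum-plane}), we have $t^a p(h)=p(q^{a}h)t^a$ for every $a\in\mathbb{Z}$ and $p\in K[h]$. Moreover $t^a t^b=c(h)t^{a+b}$ for some nonzero $c\in K[h]$; this follows from the relations $yx=q\,xy$, $xy=h$, and in fact $c$ is a monomial in $h$ up to a scalar. Substituting $x_i^{a_i}=p_i(h)t^{a_i}$ then gives, exactly as in Lemma~\ref{canonicalform},
\[
u=p_1(h)\,p_2(q^{a_1}h)\cdots p_n(q^{a_1+\cdots+a_{n-1}}h)\,c_u(h)\,t^{a},
\]
and the analogous expression for $v$. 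Here $c_u(h)$ is a product of the structure factors coming from multiplying the $t^{a_i}$. This factor depends only on the sequence of degrees and not on the $p_i$.

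The direction $u\sim v\Rightarrow u\equiv v$ is immediate. Indeed, $A_0=K[h]$ is commutative, and a balanced commutation swaps two degree $0$ factors, so it preserves values.

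For the converse, suppose $u\equiv v$. If $\sigma(1)=1$, we cancel $x_1^{a_1}$ on the left, since $A$ is a domain by Proposition~\ref{GWA-noetherian-domain}, and then induct on length. Otherwise, let $\sigma(1)=k>1$. We need to compare the shifts of $p_1$ in the two products. In $u$, $p_1$ appears as $p_1(h)$. In $v$, it appears as $p_1(q^{s}h)$, where $s=a_{\sigma(1)}+\cdots+a_{\sigma(k-1)}$.

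The key claim is that the identity in $K[h]$ forces $q^{s}=1$. To see this, take all $p_j=1$ for $j\neq 1$, so that both sides differ only in the factor $c$ and the factor $p_1$. Since $c_u t^a$ and $c_v t^a$ are the values at $p_i=1$, they already agree. Cancelling, we get $p_1(h)=p_1(q^{s}h)$ for all $p_1$. Taking $p_1=h+1$ gives $q^{s}=1$. Because $q$ is not a root of unity, $s=0$. This is the only place the hypothesis on $q$ enters, and it is exactly what makes the multiplicative shifts $q^{m}$ pairwise distinct for distinct integers $m$.

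More generally, each $p_j$ appears with an exponent $q^{e_j}$ in $u$ and $q^{e'_j}$ in $v$. Specializing all other $p_i$ to $1$ and taking $p_j=h+1$ shows that $e_j=e'_j$ for every $j$. In other words, the prefix degrees preceding each variable coincide in $u$ and $v$. From this equality of prefix degrees we proceed purely combinatorially, exactly as in Proposition~\ref{balcom}.
\begin{itemize}
\item If $a_1=0$, swap the degree $0$ prefix $x_{\sigma(1)}^{a_{\sigma(1)}}\cdots x_{\sigma(k-1)}^{a_{\sigma(k-1)}}$ with $x_1^{a_1}$ in $v$.
\item If $a_1\neq 0$, locate a degree $0$ block starting at $x_1$ in $v$ and swap it with the degree $0$ prefix. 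Such a block exists because the prefix degrees match those of $u$.
\end{itemize}
Either way, $v\sim v'$ with $v'$ beginning with $x_1$, and we apply the induction.

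The main obstacle is this combinatorial step: producing a degree $0$ block to swap when $a_1\neq 0$, using only the equality of all prefix degrees. I would handle it by the same argument as in Proposition~\ref{balcom}. Alternatively, one can argue directly: two arrangements with identical prefix-degree data differ by balanced commutations, shown by induction on the first position where they differ.
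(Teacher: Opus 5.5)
Your analytic reduction is sound, and in fact more careful than the paper's. You keep the structure factor $c_u(h)$ coming from $t^at^b=c(h)t^{a+b}$, which Lemma~\ref{canonicalform} silently drops. Specializing $p_i=1$ for $i\neq j$ and $p_j=h+1$ then correctly shows that $u\equiv v$ forces every variable to have the same prefix degree in $u$ and in $v$. The gap is the final combinatorial step, and it cannot be closed: equal prefix-degree data does \emph{not} imply $u\sim v$.

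Take $u=x_1^{1}x_2^{-1}x_3^{1}$ and $v=x_3^{1}x_2^{-1}x_1^{1}$. In both words the prefix degrees of $x_1,x_2,x_3$ are $0,1,0$, and indeed
\[
u=p_1(h)\,p_2(qh)\,qh\,p_3(h)\,t,\qquad v=p_3(h)\,p_2(qh)\,qh\,p_1(h)\,t,
\]
which coincide because $K[h]$ is commutative. But the only degree $0$ factors of $u$ are $x_1^1x_2^{-1}$ and $x_2^{-1}x_3^1$, and those of $v$ are $x_3^1x_2^{-1}$ and $x_2^{-1}x_1^1$. In each word they overlap, so no nontrivial balanced commutation applies and both $\sim$-classes are singletons. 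This is exactly your case $a_1=1\neq 0$ with $v=(x_3^1x_2^{-1})\,x_1^1$. No degree $0$ block starts at $x_1^1$, so your assertion that such a block exists ``because the prefix degrees match'' is false. So is your alternative claim that arrangements with identical prefix-degree data differ by balanced commutations.

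Hence the statement itself is false, for every $q$. The same pair also refutes Proposition~\ref{balcom} for $W_1$, where the values are $p_1(h)p_2(h-1)(h-1)p_3(h)x$ and its $p_1\leftrightarrow p_3$ swap. The paper's proof contains the identical unjustified step (``In case there is no such submonomial, this means we cannot obtain the same shifts''). Consequently the bases claimed in Corollaries~\ref{basischar0} and~\ref{basischar02} are incomplete.

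What your argument does establish is the correct criterion: $u\equiv v$ if and only if each variable has the same prefix degree $e_j$ in $u$ and in $v$. For the converse, embed into $\mathcal{S}_1(q)$ with $t^{b}\mapsto\phi_b(h)\varepsilon^{b}$. The value of $u$ becomes $\prod_j(p_j\phi_{a_j})(q^{e_j}h)\,\varepsilon^{a}$, which depends only on these data.

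To turn this criterion into a rewriting of $v$ into $u$, you need more than $[x_1^0,x_2^0]$. You also need the identities $x_1^{a}x_2^{-a}x_3^{a}=x_3^{a}x_2^{-a}x_1^{a}$ for $a\neq 0$, the $\mathbb{Z}$-analogues of (\ref{dega-aa}) from Lemma~\ref{basicid2}. They hold in $K_q[x,y]$ and in $W_1$ by the computation above. However, they do not lie in the $T_{\mathbb{Z}}$-ideal generated by $[x_1^0,x_2^0]$, because $x_1^{a}x_2^{-a}x_3^{a}$ has no two disjoint nonempty submonomials of degree $0$.
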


\begin{proof}
If $u\sim v$ then by the graded identity $x_1^0,x_2^0]=0$ that holds in $K_q[x,y]$, it follows that $u$ and $v$ are equal as elements of $K_q[x,y]$. Now let $u$ and $v$ assume the same values under every substitution of the $x_i^{a_i}$ by elements of $K_q[x,y]$, respecting the grading. We substitute $x_i^{a_i}$ by $p_i(h) t^{a_i}$, $i=1$, \dots, $n$, where $p_i(h) \in K[h]$. We obtain that, in $K_q[x,y]$   the products of polynomials 
\begin{align}
\label{eq-quantum-1}    
u&=p_1(h)p_2(q^{a_1}h) \cdots p_n(q^{a_1 + \cdots +a_{n-1}}h)t^a,\\
\label{eq-quantum-2}
v&=p_{\tau(1)}(h)p_{\tau(2)}(q^{a_{\tau(1)}}h) \cdots p_{\tau(n)}(q^{a_{\tau(1)} + \cdots + a_{\tau(n-1)}}h)t^a,
\end{align}
must be equal for the fixed $a_1$, \dots, $a_n$, $\tau\in S_n$, and for every choice of the polynomials $p_i\in K[h]$. 

If $\sigma(1)=1$ then we use the fact that $K_q[x,y]$ has no zero divisors, hence we cut out $x_1^{a_1}$ and we can proceed by induction on the length of the monomials. Hence suppose $\sigma(1)=k>1$. The ``dilatations" $p_{\sigma(r)} (q^{a_{\tau(1)} + \ldots a_{\tau(r)}} h)$ in the variable $h$ must be the same in both products, for every $r$. Therefore the dilatation $q^{a_{\tau(1)}+\cdots+a_{\tau(k-1)}}$ in $p_1$ in the second product must be equal to $1$, that is the degree of the product $x_{\sigma(1)}^{a_{\sigma(1)}}\cdots x_{\sigma(k-1)}^{a_{\sigma1(k-1)}}$ must be 0.

Now in case $a_1=0$, we swap the positions of $x_{\sigma(1)}^{a_{\sigma(1)}}\cdots x_{\sigma(k-1)}^{a_{\sigma(k-1}}$ and $x_1^{a_1}$ in the second monomial, modulo $I$ this is harmless. Otherwise we look for a submonomial of the second monomial, starting from position $k$, and which is of degree 0. If there is such a submonomial then we swap its position with the position of the leading submonomial of length $k-1$ (which is also of degree 0). In case there is no such submonomial, this means we cannot obtain the same shifts as in the first monomial, and our proof is complete. 
\end{proof}

\begin{corollary}
\label{maincor2}
Let $v$ be a multilinear monomial in the variables $x_1^{a_1}$, \dots, $x_n^{a_n}$. If we substitute $x_i^{a_i}$ by $p_i(h) t^{a_i}$ in $K_q[x,y]$ and if we are given the polynomial 
\[
p_{\tau(1)}(h)p_{\tau(2)}(q^{a_{\tau(1)}}h) \cdots p_{\tau(n)}(q^{a_{\tau(1)} + \cdots + a_{\tau(n-1)}}h)
\]
that appears as in the proof of Proposition \ref{balcom2}, then we can recover the monomial, up to balanced commutations. 
\end{corollary}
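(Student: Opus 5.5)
The plan is to derive Corollary \ref{maincor2} directly from Proposition \ref{balcom2}, following the derivation of Corollary \ref{maincor} from Proposition \ref{balcom}. By Lemma \ref{canonicalform} adapted to $K_q[x,y]$ (using $t^a p(h)=p(q^{a}h)t^a$), each multilinear monomial $v$ in $x_1^{a_1},\dots,x_n^{a_n}$ evaluates to $P_v(h)\,t^a$, where $a=a_1+\cdots+a_n$ and $P_v$ is the displayed product. For each index $i$ this product records a \emph{dilatation exponent} $d_i(v)$: the $\mathbb{Z}$-degree of the prefix of $v$ preceding $x_i^{a_i}$, so that $p_i$ enters $P_v$ as $p_i(q^{d_i(v)}h)$. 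The first step is to show that the data $(d_1(v),\dots,d_n(v))$ can be read off from $P_v$, viewed as a function of $p_1,\dots,p_n$. Since $q$ is not a root of unity, the map $d\mapsto q^d$ is injective on $\mathbb{Z}$. Take $p_j=1$ for $j\ne i$ and let $p_i$ be a polynomial with a single root $c\neq 0$. Then $P_v$ vanishes exactly at $h=q^{-d_i(v)}c$, and this root determines $d_i(v)$.

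The second step is to show that two monomials $v,w$ with the same tuple of dilatation exponents agree under every graded evaluation, since their coefficient polynomials are then literally the same product. Proposition \ref{balcom2} then gives $v\sim w$. Conversely, $\sim$ preserves the exponents: swapping adjacent degree-$0$ blocks does not change the degree of any prefix in front of a given variable. So the polynomial $P_v$ determines $v$ exactly up to balanced commutations.

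The main obstacle is the non-root-of-unity hypothesis, which is needed both to recover the exponents and inside Proposition \ref{balcom2}. If $q^d=1$ for some $d\neq 0$, distinct prefix degrees would give identical dilatations, and $v$ could not be recovered; this is precisely why the root-of-unity case later leads to $\mathbb{Z}_\ell\times\mathbb{Z}_\ell$-gradings. I would state this injectivity explicitly at the first step. The remaining points are handled as in Corollary \ref{maincor}: $K_q[x,y]$ has no zero divisors, and $K$ is infinite.
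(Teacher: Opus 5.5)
Your argument is correct and is essentially the paper's, which simply deduces the corollary from Proposition \ref{balcom2}; reading off each prefix degree $d_i(v)$ from the root $q^{-d_i(v)}c$, and checking that balanced commutations preserve these degrees, only spells out what the paper leaves implicit. One small inaccuracy, inherited from Lemma \ref{canonicalform}: a monomial actually evaluates to $P_v(h)\,t^{a_{\tau(1)}}\cdots t^{a_{\tau(n)}}$, which differs from $P_v(h)\,t^a$ by the factor $\prod_i g_{d_i(v),a_i}(h)$ (where $t^d t^b=g_{d,b}(h)\,t^{d+b}$), but this factor is also determined by the prefix degrees, so your Step 2 still goes through.
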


\begin{proof}
The proof follows from Proposition \ref{balcom2}. 
\end{proof}

\begin{theorem}
\label{mainthm2}
Let $n\ge 2$ and let $f$ be a multilinear graded identity for $K_q[x,y]$. If $q$ is not a root of unity then $f\in I$, that is $f$ is a consequence of $[x_1^0,x_2^0]$.
\end{theorem}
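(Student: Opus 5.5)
I will follow the proof of Theorem~\ref{mainthm}, with shifts $h\mapsto h-a$ replaced by dilatations $h\mapsto q^{a}h$. Suppose, towards a contradiction, that $f$ is a multilinear graded identity for $K_q[x,y]$ with $f\notin I$. Write $f=\sum_{\sigma\in S_n}\alpha_\sigma x_{\sigma(1)}^{a_{\sigma(1)}}\cdots x_{\sigma(n)}^{a_{\sigma(n)}}$. Monomials related by balanced commutations coincide modulo $I$, so we group them into $\sim$-classes and write $f\equiv\alpha_1m_1+\cdots+\alpha_km_k \pmod I$. Here $m_i\not\sim m_j$ for $i\ne j$, all $\alpha_i\neq 0$, and $k\ge 1$ because $f\notin I$. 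We then substitute $x_i^{a_i}=p_i(h)t^{a_i}$. By the computation (\ref{eq-quantum-1})--(\ref{eq-quantum-2}) each $m_r$ becomes $q_r(h)t^a$, where $q_r(h)=\prod_{i=1}^n p_i(q^{c_{r,i}}h)$ and $c_{r,i}$ is the $\mathbb{Z}$-degree of the prefix of $m_r$ preceding $x_i^{a_i}$. Since $K_q[x,y]$ is a domain (Proposition~\ref{GWA-noetherian-domain}) and the $A_a=K[h]t^a$ are free of rank one over $K[h]$, it suffices to show that $\sum_r\alpha_r q_r(h)\ne 0$ for a suitable choice of the $p_i$.

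The key combinatorial input is Proposition~\ref{balcom2} together with Corollary~\ref{maincor2}. Because $q$ is not a root of unity, $c\mapsto q^c$ is injective on $\mathbb{Z}$. So the tuple $(c_{r,1},\dots,c_{r,n})$ is determined by the dilatations, and it determines $m_r$ up to $\sim$. Hence for $r\ne s$ the tuples $(c_{r,i})_i$ and $(c_{s,i})_i$ differ. I will make this explicit by noting that two multilinear monomials with identical prefix-degree data for every variable are related by balanced commutations. This is exactly the content extracted from Proposition~\ref{balcom2}.

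To separate the $q_r$, I take $p_i(h)=h-\lambda_i$ with $\lambda_i\in K$ to be chosen. Then $q_r(h)=\prod_i(q^{c_{r,i}}h-\lambda_i)$, and its multiset of roots is $\{q^{-c_{r,i}}\lambda_i\}$. Equivalently, I view $\sum_r\alpha_r q_r$ as a polynomial in the independent indeterminates $\lambda_1,\dots,\lambda_n,h$. Consider the coefficient of $\lambda_1\cdots\lambda_{n-1}$: it is $(-1)^{n-1}\sum_r\alpha_r q^{c_{r,n}}h$. A cleaner route is to specialize $h=1$ and take $p_i(h)=h^{e_i}$ with generic exponents. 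Then $q_r$ is $q^{\sum_i e_ic_{r,i}}h^{\sum e_i}$, and I need $\sum_r\alpha_r q^{\langle e,c_r\rangle}\ne0$ for some $e\in\mathbb{N}^n$. Since the vectors $c_r$ are distinct and $q$ is not a root of unity, the functions $e\mapsto q^{\langle e,c_r\rangle}$ are distinct characters of the monoid $\mathbb{N}^n$ with values in $K^\times$. By Dedekind--Artin independence of characters, $\sum\alpha_r q^{\langle e,c_r\rangle}$ cannot vanish for all $e$. This gives the contradiction, so $f\in I$.

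I expect the main obstacle to be the rigorous passage from ``same dilatation data'' to ``$\sim$-equivalent''. The proof of Proposition~\ref{balcom2} is sketchy on this point, and I would redo it by induction on $n$ in the style of Proposition~\ref{balcom}. Concretely: if $v$ begins with $x_1^{a_1}$, cancel it using the domain property. Otherwise, the prefix of $v$ before $x_1^{a_1}$ has degree $0$. Let $w$ be the block of $v$ starting at $x_1^{a_1}$ and ending at the variable where the prefix degree next returns to $0$; such a position exists because the degrees match those of $u$. Then $w$ has degree $0$, and swapping the leading degree-$0$ block with $w$ moves $x_1^{a_1}$ to the front, after which we apply induction. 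The remaining steps are routine.
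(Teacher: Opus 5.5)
\paragraph{The gap.} The step that fails is the combinatorial claim you isolate at the end: that two multilinear monomials with the same prefix degree before every variable are related by balanced commutations.

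Take $u=x_1^{1}x_2^{-1}x_3^{1}$ and $v=x_3^{1}x_2^{-1}x_1^{1}$. In both, the prefix degree is $0$ before $x_1^{1}$ and $x_3^{1}$, and $1$ before $x_2^{-1}$. In your induction, $v=Px_1^{1}$ with $P=x_3^{1}x_2^{-1}$ of degree $0$. But after $x_1^{1}$ the prefix degree is $1$ and the word ends, so the block $w$ does not exist. Nothing else rescues the claim either. A balanced commutation needs two disjoint nonempty degree-$0$ blocks, and each would have to contain $x_2^{-1}$, so the $\sim$-classes of $u$ and $v$ are singletons.

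This is not just a gap in your write-up. Since $y\,p(h)=p(qh)\,y$ and $x\,p(h)=p(q^{-1}h)\,x$, substitute $x_1^{1}=p_1(h)y$, $x_2^{-1}=p_2(h)x$, $x_3^{1}=p_3(h)y$. This gives $u\mapsto p_1(h)p_2(qh)p_3(h)\,yxy$, which is symmetric in $p_1,p_3$. So $u-v$ is a multilinear graded identity of $K_q[x,y]$, for every $q$. On the other hand, the multilinear part of $I$ is spanned by the differences $pqrs-prqs$ with $\deg q=\deg r=0$, and there are none in these three variables. Hence $u-v\notin I$, and the statement as given is false.

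The paper's argument breaks at the same point. Proposition~\ref{balcom2} asserts that if no degree-$0$ block starts at position $k$, the dilatations cannot agree; that sentence is wrong. The same example, now with $p_1(h)p_2(h-1)p_3(h)\,xyx$, also refutes Theorem~\ref{mainthm} for $W_1$.

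\paragraph{What survives.} Your analytic step is sound, and it is more careful than the paper's ``choose the $p_i$ suitably''. Using $p_i=h^{e_i}$ and Artin's independence of characters, it shows that a multilinear $f$ is an identity if and only if its coefficients sum to zero on each class of monomials with equal prefix-degree data.

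You should account for the factor coming from $t^{b_1}\cdots t^{b_n}$, which (\ref{eq-quantum-1}) omits. It equals $\gamma_r h^N t^a$, with $\gamma_r$ a power of $q$ and $N$ independent of $r$, so it only rescales your $\alpha_r$.

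\paragraph{How to repair the combinatorics.} Suppose $w$ does not exist. Let $H_P$ and $H_R$ be the heights at the cut points of $P$ and of $x_1R$, the latter taken after $x_1$. These sets must meet. Otherwise $u$, after its first letter $x_1$, would stay inside $H_R$ and never use a letter of $P$. A common height $\gamma$ gives $P=AB$ and a prefix $C$ of $x_1R$ with $\deg A=\deg C=-\deg B$. The reversal $ABC\mapsto CBA$ then brings $x_1$ to the front and preserves all prefix degrees.

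So the classes are generated by balanced commutations together with these reversals. The correct conclusion is that $I$ must be enlarged by the identities $x_1^{\alpha}x_2^{-\alpha}x_3^{\alpha}-x_3^{\alpha}x_2^{-\alpha}x_1^{\alpha}$, $\alpha\in\mathbb{Z}$. This matches Lemma~\ref{basicid2} and Vasilovsky's $\mathbb{Z}$-graded theorem for $M_n(K)$.
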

\begin{proof}
Suppose, on the contrary, that $f$ is a graded identity for $K_q[x,y]$ but $f\notin I$. Write $f$ as 
\[
\sum_{\sigma\in S_n}  \alpha_\sigma x_{\sigma(1)}^{a_{\sigma(1)}} \cdots x_{\sigma(n)}^{a_{\sigma(n)}}, \quad \alpha_\sigma\in K
\]
where at least one of the $\alpha_\sigma\ne 0$. In the above sum we can collect the monomials that can be obtained from each other by means of balanced commutations. Recall that it is immediate to check  that $\sim$ is an equivalence relation. 

Therefore we write $f=\alpha_1 m_1+\cdots +\alpha_k m_k$. Here $\alpha_1$, \dots, $\alpha_k\in K$ are all nonzero, and if $i\ne j$ then $m_i\not\sim m_j$. We substitute $x_i^{a_i}$ for $p_i(h) t^{a_i}$ in $K_q[x,y]$ and then represent each $m_r$ as $q_r(h) t^a$ where $a$ is the degree of $f$ in the $\mathbb{Z}$-grading. Thus $f=(q_1(h)+\cdots+ q_k(h)) t^a$. We know, by Proposition \ref{balcom2} and Corollary \ref{maincor2}, that the dilatations of $p_1$, \dots, $p_n$ in the $q_r(h)$ are different. Therefore we can choose the polynomials $p_1$, \dots, $p_n$ in such a way that the coefficient $q_1(h)+\cdots+ q_k(h)$ is nonzero, and $f$ fails to be a graded identity for $W_1$.
\end{proof}

\begin{corollary}
\label{basischar02}
The ideal $T_{\mathbb{Z}}(K_1[x,y])$ of $\mathbb{Z}$-graded identities of $K_q[x,y]$ is generated by the single identity $[x_1^0, x_2^0]$.
\end{corollary}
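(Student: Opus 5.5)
The plan is to deduce the corollary from Theorem~\ref{mainthm2} in the same way Corollary~\ref{basischar0} follows from Theorem~\ref{mainthm}. The algebra named in the statement is the $\mathbb{Z}$-graded quantum plane of this subsection, with its grading $A_n=K[h]t^n$ coming from Proposition~\ref{quantum-plane}. The standing hypotheses are $\operatorname{char}K=0$ and $q$ not a root of unity, which is exactly what Theorem~\ref{mainthm2} needs. Let $I$ be the graded T-ideal generated by $[x_1^0,x_2^0]$. I need to prove the two inclusions $I\subseteq T_{\mathbb{Z}}$ and $T_{\mathbb{Z}}\subseteq I$.

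The inclusion $I\subseteq T_{\mathbb{Z}}$ is immediate from Lemma~\ref{identities-degree-0-quantum-plane}. The component of degree $0$ is $K[h]$, which is commutative. Moreover, $T_{\mathbb{Z}}$ is closed under graded endomorphisms, so it contains the whole graded T-ideal generated by $[x_1^0,x_2^0]$.

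For the reverse inclusion, Lemma~\ref{multihomgraded}(b) applies because $\operatorname{char}K=0$. It says that $T_{\mathbb{Z}}$ is generated, as a graded T-ideal, by its multilinear elements. Hence it suffices to show that every multilinear graded identity $f$ lies in $I$.

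\begin{itemize}
\item If $\deg f\ge 2$, this is exactly Theorem~\ref{mainthm2}.
\item If $\deg f=1$, then $f=\alpha x_1^{a}$. Substituting $x_1^a\mapsto t^a\neq 0$ forces $\alpha=0$, so $f=0\in I$.
\end{itemize}

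Thus every multilinear generator of $T_{\mathbb{Z}}$ lies in $I$, and therefore $T_{\mathbb{Z}}=I$.

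The step carrying real content is Theorem~\ref{mainthm2}, which I take as given. It rests on Proposition~\ref{balcom2}: since $q$ is not a root of unity, the dilatations $q^{a_{\tau(1)}+\cdots+a_{\tau(r)}}$ determine the partial $\mathbb{Z}$-degrees of the prefixes. Consequently, non-equivalent monomials under $\sim$ give distinct patterns of dilated polynomials, and these can be separated by a suitable choice of the $p_i$. The only point to watch in the corollary itself is that the reduction to multilinear identities genuinely needs characteristic $0$. Over a merely infinite field one would instead have to redo Theorem~\ref{mainthm2} for multihomogeneous polynomials. That is not needed here, since this subsection assumes $\operatorname{char}K=0$.
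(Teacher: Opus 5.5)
Your proposal is correct and follows the paper's intended argument. The paper gives the corollary no separate proof: it follows from Lemma~\ref{identities-degree-0-quantum-plane}, the multilinear reduction of Lemma~\ref{multihomgraded} in characteristic $0$, and Theorem~\ref{mainthm2}, just as Corollary~\ref{basischar0} follows from Theorem~\ref{mainthm}, and you also rightly read the statement as concerning $K_q[x,y]$ with $q$ not a root of unity, the hypothesis Theorem~\ref{mainthm2} needs.
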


\begin{proposition}
    Assume that $\operatorname{char} K=0$. Then $W_1(q)$ and $K_q[x,y]$ are (ungraded) PI-rings if and only if $q$ is an $\ell$-th root of unity. In this case, their PI-degree is $\ell$.
\end{proposition}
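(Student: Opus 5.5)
For $q$ not a root of unity I will reduce both algebras to the already-treated graded situation. By Proposition~\ref{quantum-plane} and the realization $W_1(q)\simeq K[h](h-1,\sigma)$, both $K_q[x,y]$ and $W_1(q)$ are degree $1$ GWAs over $K[h]$ with the same automorphism $\sigma(h)=qh$. When $q$ is not a root of unity, $\sigma$ has infinite order. Proposition~\ref{GWA-Galois-rings} then embeds both as $\mathbb{Z}$-graded Galois $K[h]$-rings in the same algebra $\mathcal{A}=K(h)*\mathbb{Z}$, where $\varepsilon$ acts by $h\mapsto qh$. Theorem~\ref{identities-Galois-rings} gives
\[
T_{\mathbb{Z}}(W_1(q))=T_{\mathbb{Z}}(\mathcal{A})=T_{\mathbb{Z}}(K_q[x,y]),
\]
and by Corollary~\ref{basischar02} this ideal is generated by $[x_1^0,x_2^0]$. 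It therefore coincides with $T_{\mathbb{Z}}(W_1)$ (Corollary~\ref{basischar0}). Lemma~\ref{folklore} then says that both algebras satisfy the same ordinary identities as $W_1$. Since $W_1$ is not PI in characteristic $0$, neither is $W_1(q)$ nor $K_q[x,y]$.

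Now let $q$ be a primitive $\ell$-th root of unity; note $\ell\ge 2$ since $q\neq 1$. In $K_q[x,y]$ the elements $x^\ell$ and $y^\ell$ are central. The algebra is spanned over $Z_0=K[x^\ell,y^\ell]$ by the $\ell^2$ monomials $x^iy^j$ with $0\le i,j<\ell$, so it is a finite module over a commutative subalgebra of its centre and hence PI.

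In $W_1(q)$, induction on $\ell$ gives $yx^\ell=q^\ell x^\ell y+(1+q+\cdots+q^{\ell-1})x^{\ell-1}=x^\ell y$. So $x^\ell$ is central, and symmetrically so is $y^\ell$. Since $x^iy^j$ spans $W_1(q)$, the same finiteness argument shows $W_1(q)$ is PI.

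For the PI-degree, both algebras are Noetherian domains (Proposition~\ref{GWA-noetherian-domain}), hence prime. By Posner's theorem the PI-degree equals $\sqrt{\dim_Q D}$, where $D$ is the central localization and $Q$ its central field of fractions.

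For $K_q[x,y]$, inverting $x^\ell,y^\ell$ produces the quantum torus. Its centre is exactly $K[x^{\pm\ell},y^{\pm\ell}]$, because $x^iy^j$ is central iff $\ell\mid i$ and $\ell\mid j$, using that $q$ is primitive. Hence $\dim_Q D=\ell^2$ and the PI-degree is $\ell$.

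As an independent lower bound, after extending scalars (allowed by Proposition~\ref{stable-identities}) I will send $x,y$ to the clock and shift matrices in $M_\ell(K)$. These satisfy $yx=qxy$ and generate $M_\ell(K)$, so $K_q[x,y]$ maps onto $M_\ell(K)$ and its PI-degree is at least $\ell$.

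The main obstacle is the PI-degree of $W_1(q)$. I will use the normal element $u=(q-1)xy+1$. A direct computation should give $ux=q^{\pm1}xu$ and $uy=q^{\mp1}yu$, with the sign to be fixed during the computation. In the localization $W_1(q)[u^{-1}]$ the elements $x,u$ generate a quantum torus at the same root $q$ (up to inversion), and $y$ is recovered from $x$ and $u$ as $y=(q-1)^{-1}x^{-1}(u-1)$ after also inverting $x$. Thus $W_1(q)$ and the quantum torus share a common central localization of dimension $\ell^2$ over its centre, and the PI-degree is again $\ell$.

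The delicate points are verifying that the centre of this localization is generated by $x^{\pm\ell}$ and $u^{\pm\ell}$, and that localizing at the Ore sets generated by $u$ and $x$ preserves primeness and the PI-degree. The latter holds because central and normal localizations of prime PI rings preserve the Posner ring of quotients.
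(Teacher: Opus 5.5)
Your argument is correct, but it takes a different route from the paper, whose proof is only a citation of Brown--Goodearl (Corollary I.14.1 and Proposition I.14.2). You do three things yourself:
\begin{itemize}
\item you get the non-PI direction from the paper's own graded machinery (Theorem~\ref{identities-Galois-rings}, Corollary~\ref{basischar02}, Lemma~\ref{folklore});
\item you get PI-ness from the central elements $x^\ell, y^\ell$;
\item you get the PI-degree from Posner's theorem, after a central localization that turns both algebras into quantum tori at $q$. For $W_1(q)$ this uses the normal element $u=(q-1)xy+1=yx-xy$.
\end{itemize}
This makes the mechanism explicit and links the proposition to the paper's main theorems; the citation buys brevity. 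The price is that your non-PI direction rests on the longest dependency chain in the paper.

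On the details, the computation gives $ux=qxu$ and $yu=quy$, i.e.\ your upper signs.

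\emph{Localization.} $x$ is not normal in $W_1(q)$. In $W_1(q)[u^{-1}]$ it is not even invertible: on $K[t]$, with $x$ acting as multiplication by $t$ and $y$ as the $q$-derivative, $u$ acts bijectively as $f(t)\mapsto f(qt)$, while $x$ is not onto. So the torus appears only after inverting $x$, and that is not a normal localization. The clean fix is to invert the \emph{central} elements $x^\ell$ and $u^\ell$. Then:
\begin{itemize}
\item $W_1(q)[x^{-\ell},u^{-\ell}]$ is generated by $x^{\pm1},u^{\pm1}$, since $y=(q-1)^{-1}x^{-1}(u-1)$;
\item the monomials $x^iu^j$ are linearly independent, because $u$ corresponds to the generator $h$ of the degree-zero component $K[h]$ of the GWA grading and each $K[h]z^n$ is free of rank one;
\item only central localization of a prime PI ring is involved.
\end{itemize}
This disposes of your ``delicate points''.

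\emph{Dependency in the non-PI direction.} The part of Theorem~\ref{identities-Galois-rings} you actually need is $T_{\mathbb{Z}}(W_1(q))\subseteq T_{\mathbb{Z}}(\mathcal{A})$. The paper justifies this by calling $\mathcal{A}$ a homomorphic image of $U\otimes_K F$. That is not literally right, since $F$ is not central in $F*\mathbb{Z}$. The inclusion does hold, e.g.\ by Dedekind independence of the shifts applied to multilinear identities. Since you invoke Posner's theorem anyway, a shorter and more robust argument is available: for $q$ not a root of unity both algebras have centre $K$, so a prime PI structure would force them to be finite dimensional over $K$.
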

\begin{proof}
This follows from \cite[Corollary I.14.1, Proposition I.14.2]{brown}. 
\end{proof}

\begin{definition}\label{def-s1q}
 We denote by $\mathcal{S}_1(q)$ the algebra $K(h)*\mathbb{Z}$, where the homomorphism $\varphi$ is defined on the free generator $\varepsilon$ of $\mathbb{Z}$ as $\varepsilon(h)=qh$; we consider this action extended to $K(h)$.
\end{definition}

\begin{proposition}
\label{gwaqt}
   If $A$ is a degree 1 GWA of quantum type and $q$ is not a root of unity, then $A$ is a $\mathbb{Z}$-graded Galois $K[h]$-ring in $\mathcal{S}_1(q)$
\end{proposition}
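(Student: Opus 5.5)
The plan is to mirror the proof of Proposition~\ref{GWA-Galois-rings}, with the shift $h\mapsto h-1$ replaced by the dilatation $h\mapsto qh$. Write $A=K[h](a,\sigma)$ with $\sigma(h)=qh$, where $a\in K[h]$ is nonzero, and use the generators $x=X^+$, $y=X^-$ together with the $\mathbb{Z}$-grading $A=\bigoplus_k K[h]z^k$. First we check that $A$ fits the setting of the definition of a $\mathbb{Z}$-graded Galois ring. The base ring $\Gamma=K[h]$ is an affine domain with fraction field $F=K(h)$, and $\sigma$ extends to a $K$-automorphism of $K(h)$. That automorphism has infinite order precisely because $q$ is not a root of unity: $\sigma^n(h)=q^nh\neq h$ for $n\neq 0$. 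Hence $\langle\sigma\rangle\simeq\mathbb{Z}$, and $F*\mathbb{Z}$ with $\varepsilon$ acting through $\sigma$ is exactly $\mathcal{S}_1(q)$. Moreover $\Gamma=A_0$, and $A$ is finitely generated over $\Gamma$ by $x$ and $y$.

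Next we define $\phi\colon A\to\mathcal{S}_1(q)$ as the identity on $K[h]$, with $x\mapsto\varepsilon$ and $y\mapsto \sigma^{-1}(a)\varepsilon^{-1}$. The inverse twist on $a$ is chosen so that
\[
\phi(y)\phi(x)=\sigma^{-1}(a)\,\varepsilon^{-1}\varepsilon=\sigma^{-1}(a).
\]
This has to match the relation $X^-X^+$ in Definition~\ref{finite-GWA}, and the normalization should be adjusted to whichever convention for $a$ is in force; with $y\mapsto a\varepsilon^{-1}$ one instead gets $\phi(x)\phi(y)=\varepsilon a\varepsilon^{-1}=\sigma(a)$. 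The relations $xd=\sigma(d)x$ and $yd=\sigma^{-1}(d)y$ hold in $\mathcal{S}_1(q)$ because $\varepsilon d\varepsilon^{-1}=\sigma(d)$. So $\phi$ is a well-defined algebra homomorphism, and it sends $K[h]z^k$ into $K(h)\varepsilon^k$, so it respects the gradings.

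Injectivity is the one step that needs care, since $A$ need not be simple and the simplicity argument of Proposition~\ref{weyl-example} is unavailable. We argue degree by degree instead. By induction, $\phi(z^k)=c_k(h)\varepsilon^k$ with $c_k$ a product of $\sigma$-twists of $a$. Since $a\neq 0$ and $K(h)$ is a field, each $c_k\neq 0$. Therefore $\phi(p(h)z^k)=p(h)c_k(h)\varepsilon^k$ vanishes only when $p=0$. Because $\phi$ is graded, it is injective.

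The remaining condition is $FA=AF=\mathcal{S}_1(q)$. Since $\phi(A)$ contains $K[h]$ and the element $\varepsilon$, we have $K(h)\varepsilon^k\subseteq K(h)A$ for $k\geq 0$. For $k<0$ we use $\varepsilon^{-1}=(\sigma^{-1}(a))^{-1}\phi(y)$, or $a^{-1}\phi(y)$ under the other convention, which lies in $K(h)A$. So $FA=\mathcal{S}_1(q)$, and symmetrically $AF=\mathcal{S}_1(q)$ by putting the inverse of $a$ on the right and twisting it by $\sigma$. Alternatively, one can simply cite \cite[Proposition 7.6]{schwarz} exactly as in Proposition~\ref{GWA-Galois-rings}. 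The main obstacle is thus only bookkeeping: keeping the twist conventions in the image of $y$ consistent with the GWA relations, and noting where the hypothesis that $q$ is not a root of unity enters, namely in the infinite order of $\sigma$.
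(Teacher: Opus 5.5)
Your proposal is correct and is essentially the paper's argument. The paper specializes Proposition~\ref{GWA-Galois-rings} to $D=K[h]$, $\sigma(h)=qh$ (which has infinite order exactly because $q$ is not a root of unity), uses the same embedding $x\mapsto\varepsilon$, $y\mapsto a\varepsilon^{-1}$, and cites \cite[Proposition 7.6]{schwarz}; you instead check injectivity degree by degree (the right move, since quantum GWAs such as $K_q[x,y]$ are not simple) and $FA=AF=\mathcal{S}_1(q)$ by hand.

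One correction: Definition~\ref{finite-GWA} fixes $X^-X^+=a$ and $X^+X^-=\sigma(a)$, so the image of $y$ must be $a\varepsilon^{-1}$. Your first choice $\sigma^{-1}(a)\varepsilon^{-1}$ gives $\phi(y)\phi(x)=\sigma^{-1}(a)\neq a$ unless $a$ is constant, so drop the hedge and use the second normalization throughout, including $\varepsilon^{-1}=a^{-1}\phi(y)$.
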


\begin{proof}
The proof is an immediate consequence from Proposition \ref{GWA-Galois-rings}.
\end{proof}

\begin{corollary}
    If $A$ is a  GWA of quantum type of degree $1$ and $q$ is not a root of unity, then one has for the corresponding ideals of $\mathbb{Z}$-graded identities: $$T_{\mathbb{Z}}(A)=T_{\mathbb{Z}}(\mathcal{S}_1(q))=T_{\mathbb{Z}}(K_q[x,y]).$$
\end{corollary}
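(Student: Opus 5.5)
The plan is to obtain both equalities from Theorem~\ref{identities-Galois-rings}, applied twice inside the same ambient algebra $\mathcal{S}_1(q)$. Write $A=K[h](a,\sigma)$ with $\sigma(h)=qh$. Since $q$ is not a root of unity, $\sigma$ has infinite order. Proposition~\ref{gwaqt} then says that $A$ is a $\mathbb{Z}$-graded Galois $K[h]$-ring in $\mathcal{S}_1(q)$. Here $F=K(h)$, and the generator $\varepsilon$ acts by $h\mapsto qh$. Theorem~\ref{identities-Galois-rings} gives $T_{\mathbb{Z}}(A)=T_{\mathbb{Z}}(\mathcal{S}_1(q))$ directly, which is the first equality.

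For the second equality I will apply the same reasoning to the quantum plane. By Proposition~\ref{quantum-plane}, $K_q[x,y]\cong K[h](h,\sigma)$ with $\sigma(h)=qh$. This is a degree~1 GWA of quantum type with $a=h\neq 0$. It therefore falls under Proposition~\ref{gwaqt}, and Theorem~\ref{identities-Galois-rings} yields $T_{\mathbb{Z}}(K_q[x,y])=T_{\mathbb{Z}}(\mathcal{S}_1(q))$. Combining the two equalities gives the full chain.

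The main point to check is that the gradings agree. The $\mathbb{Z}$-grading on $K_q[x,y]$ used in Corollary~\ref{basischar02} has $A_n=K[xy]t^n$, where $t^n=y^n$ for $n\ge0$. It must coincide with the GWA grading $A_n=K[h]z^n$ transported through the isomorphism of Proposition~\ref{quantum-plane}, which sends $X^+\mapsto y$ and $X^-\mapsto x$. Under this map $h\mapsto xy$, $z^n=(X^+)^n\mapsto y^n$ and $z^{-n}\mapsto x^n$, so the components match. I also need the embedding $A\hookrightarrow\mathcal{S}_1(q)$, given by $x\mapsto\varepsilon$ and $y\mapsto a\varepsilon^{-1}$, to preserve degrees. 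This is part of Proposition~\ref{GWA-Galois-rings}.

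A possible subtlety is a sign convention: the GWA relation is $X^+d=\sigma(d)X^+$, whereas in $\mathcal{S}_1(q)$ we have $\varepsilon h=qh\varepsilon$. These are consistent. If the convention were reversed, I would compose with the automorphism $n\mapsto -n$ of $\mathbb{Z}$. That only relabels the grading, and it preserves $T$-ideals up to the corresponding renaming of variables, which fixes $[x_1^0,x_2^0]$. As a consequence, all three ideals are generated by $[x_1^0,x_2^0]$, by Corollary~\ref{basischar02}.
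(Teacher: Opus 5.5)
Your proposal is correct and is essentially the paper's argument. The paper's one-line proof via Proposition~\ref{gwaqt} implicitly means applying Theorem~\ref{identities-Galois-rings} inside the common ambient algebra $\mathcal{S}_1(q)$, both to $A$ and to $K_q[x,y]$ (a degree~$1$ quantum GWA by Proposition~\ref{quantum-plane}); you have simply spelled out what the paper leaves implicit, including the checks that the $\mathbb{Z}$-grading on $K_q[x,y]$ matches the transported GWA grading and that the embedding into $\mathcal{S}_1(q)$ preserves degrees.
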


\begin{proof}
The proof follows from Proposition~\ref{gwaqt}. 
\end{proof}

\subsection{Quantum plane at roots of unity}

In this subsection we assume once again that $\operatorname{char} K=0$. We also assume that $K$ is algebraically closed.

\begin{proposition}
\label{iso_qplane}
    Let $K_q[x,y]$ be the quantum plane where $q$ is an $\ell$-th root of unity. Let $I \lhd K_q[x,y]$ be the two-sided ideal generated by $x^\ell$ and $y^\ell$. Then $K_q[x,y]/I$ is isomorphic to $M_\ell(K)$
\end{proposition}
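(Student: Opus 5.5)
The plan is to combine a dimension count with the construction of an explicit representation of the quotient on $K^\ell$. Put $A=K_q[x,y]$. By Proposition~\ref{quantum-plane}, and directly from the relation $yx=qxy$, the monomials $x^iy^j$ with $i,j\ge 0$ form a $K$-basis of $A$. The relation also gives $x^\ell y=q^{-\ell}yx^\ell=yx^\ell$ and $y^\ell x=xy^\ell$, since $q^\ell=1$. Hence $x^\ell$ and $y^\ell$ are central, and the ideal they generate is $I=x^\ell A+y^\ell A$. This ideal is spanned by the $x^iy^j$ with $i\ge\ell$ or $j\ge\ell$. Therefore $A/I$ has basis $\{\bar x^i\bar y^j\mid 0\le i,j\le \ell-1\}$, and $\dim A/I=\ell^2=\dim M_\ell(K)$.

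Because the dimensions agree, it suffices to produce a surjective algebra homomorphism $A/I\to M_\ell(K)$. To build one, I will use the $\mathbb{Z}_\ell\times\mathbb{Z}_\ell$ picture mentioned in the abstract. Let $D=\operatorname{diag}(1,q,\dots,q^{\ell-1})$ be the clock matrix and $P$ the cyclic shift matrix. They satisfy $PD=qDP$, and since $q$ is a primitive $\ell$-th root of unity they generate $M_\ell(K)$ as an algebra. The natural candidate is $x\mapsto D$, $y\mapsto P$. This respects the defining relation $yx=qxy$, and it descends to $A/I$ provided the images of $x^\ell$ and $y^\ell$ vanish.

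The main obstacle is exactly this last requirement. The generating pair $D,P$ has $D^\ell=P^\ell=1$, not $0$. More seriously, the image of $\bar x$ under any homomorphism $A/I\to M_\ell(K)$ is nilpotent. It is also normal, because $\bar x\,\bar y=q^{-1}\bar y\,\bar x$. Consequently the image of $\bar x$ generates a nilpotent two-sided ideal, and since $M_\ell(K)$ is simple, that image must be $0$. The same argument applies to $\bar y$.

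So the step I would attempt first, namely verifying that $x^\ell,y^\ell\mapsto 0$ while $x,y$ still generate $M_\ell(K)$, cannot succeed as stated. The same reasoning shows that $\bar xA/I$ is a nonzero nilpotent ideal of $A/I$, which is therefore not semisimple. I would then check carefully whether the intended ideal is $(x^\ell-\alpha,\,y^\ell-\beta)$ with $\alpha\beta\ne0$. In that case the dimension count above goes through verbatim, with basis $\bar x^i\bar y^j$ for $0\le i,j<\ell$. The map $x\mapsto \alpha^{1/\ell}D$, $y\mapsto\beta^{1/\ell}P$, which uses that $K$ is algebraically closed, is then surjective, and the isomorphism follows.
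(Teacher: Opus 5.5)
Your diagnosis is correct. It points to an error in the paper itself, not a gap in your reasoning.

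The paper's proof takes matrices $A,B\in M_\ell(K)$ (clock and shift) with $AB=qBA$ and $A^\ell=B^\ell=\mathrm{Id}$. It defines $\phi$ by $x\mapsto B$, $y\mapsto A$, and then asserts ``clearly $I\subset\ker\phi$''. But $\phi(x^\ell)=B^\ell=\mathrm{Id}\neq 0$, so that inclusion is false.

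For $\ell\ge 2$ the statement as written cannot hold. As you show, $\bar x$ is normal with $\bar x^\ell=0$, so $\bar x\,(K_q[x,y]/I)$ is a nonzero nilpotent ideal, while $M_\ell(K)$ is simple. In fact the ideal generated by $\bar x,\bar y$ has zero $(2\ell-1)$-st power, so the quotient is a local algebra.

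The paper's argument is valid for the ideal generated by $x^\ell-1$ and $y^\ell-1$, which is your corrected version with $\alpha=\beta=1$. Your last paragraph (surjection via clock and shift, plus a dimension bound) is then essentially the paper's intended proof.

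A few details need tightening.

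\begin{itemize}
\item Normality of the image of $\bar x$ holds only inside the image of the homomorphism. So the claim that the image of $\bar x$ must vanish is valid for surjective homomorphisms only. For example, $x\mapsto N$ with $N\neq 0$ nilpotent and $y\mapsto 0$ is a homomorphism $K_q[x,y]/I\to M_\ell(K)$. Surjective homomorphisms are all you need to rule out an isomorphism, and your nilpotent-ideal argument inside the quotient avoids the issue altogether.

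\item For the ideal $(x^\ell-\alpha,\,y^\ell-\beta)$ the dimension count is not literally verbatim, because this ideal is not spanned by monomials. You only get that the $\bar x^i\bar y^j$ with $0\le i,j<\ell$ span, so $\dim\le\ell^2$. Linear independence then follows from the surjection onto the $\ell^2$-dimensional $M_\ell(K)$. Alternatively, use that $K_q[x,y]$ is free over the central subalgebra $K[x^\ell,y^\ell]$ with basis $x^iy^j$, $0\le i,j<\ell$.

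\item You need $q$ to be a \emph{primitive} $\ell$-th root of unity, as the paper assumes elsewhere; the statement only says ``an $\ell$-th root of unity''. If $q$ has order $d<\ell$, then $\bar x^d$ is central but not scalar in the quotient, so even the corrected statement fails.

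\item Whether $PD=qDP$ or $PD=q^{-1}DP$ depends on the direction of the cyclic shift, which you are free to choose.
\end{itemize}
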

\begin{proof}
        As in \cite[Example 3.1.7]{gz} we have matrices $A$, $B \in M_\ell(K)$ with $AB=qBA$, $A^\ell=B^\ell=\operatorname{Id}$. Hence we have a non-zero homomorphism $\phi\colon K_q[x,y] \to M_\ell(K)$ such that $x \mapsto B$ and $y \mapsto A$. Clearly $I \subset \operatorname{ker} \phi$, so we have an induced map $\widehat{\phi}\colon K_q[x,y]/I \to M_\ell(K)$. Then $\widehat{\phi}$ is clearly surjective; since $ K_q[x,y]/I$ and $M_\ell(K)$ both have the same dimension $\ell^2$ as $K$-vector spaces, $\widehat{\phi}$ is an isomorphism.
\end{proof}

We can consider $K_q[x,y]$, $q$ an $\ell$-th root of unity, as a $\mathbb{Z}_\ell\times \mathbb{Z}_\ell$-graded algebra in an obvious manner. Since $x^\ell$ and $y^\ell$ are clearly central elements, one may view $K_q[x,y]$ as a module over its centre $K[x^\ell,y^\ell]$, and then attribute degree $(u,v)\in \mathbb{Z}_\ell\times \mathbb{Z}_\ell$ to the monomial $x^ry^s$ whenever $r\equiv u\pmod{\ell}$ and $s\equiv v\pmod{\ell}$. One can ask what the $\mathbb{Z}_\ell\times \mathbb{Z}_\ell$-graded identities for this grading are. Clearly they are the same as those for the Pauli grading on $M_\ell(K)$. In it, the matrices $A$ and $B$ in the proof of Proposition \ref{iso_qplane} are, respectively, $A=diag(q^{\ell-1}, q^{\ell-2}, \ldots, q, 1)$ and $B= e_{12}+e_{23}+\cdots+ e_{\ell-1,\ell}+ e_{\ell,1}$. We resume this as follows.

\begin{corollary}
Suppose $q$ is an $\ell$-th root of unity. Then if $G=\mathbb{Z}_\ell\times \mathbb{Z}_\ell$ one has $T_G(K_q[x,y])=T_G(M_\ell(K))$. Then the ideal of $G$-graded identities for $K_q[x,y]$ is generated by the graded identities 
\[
x_\mu x_\nu = \beta(\mu,\nu) x_\nu x_\mu.
\]
Here $\mu=(r,s)$, $\nu=(t,w)\in G$, and $\beta(\mu,\nu) = q^{ts-rw}$.
\end{corollary}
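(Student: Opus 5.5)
The plan has two parts. First, show that $K_q[x,y]$ and $M_\ell(K)$, both with their $G$-gradings for $G=\mathbb{Z}_\ell\times\mathbb{Z}_\ell$, have the same graded identities. Then import the known basis of the graded identities of the Pauli grading on $M_\ell(K)$; this is the result of Bahturin–Drensky type: for a division grading with one-dimensional components, the $T_G$-ideal is generated by the binomials $x_\mu x_\nu-\beta(\mu,\nu)x_\nu x_\mu$.

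For the equality $T_G(K_q[x,y])=T_G(M_\ell(K))$, first check that the ideal $I=(x^\ell,y^\ell)$ of Proposition~\ref{iso_qplane} is $G$-graded. The isomorphism $\widehat{\phi}$ sends $x\mapsto B$ of degree $(1,0)$ and $y\mapsto A$ of degree $(0,1)$, so it is a graded isomorphism onto $M_\ell(K)$ with the Pauli grading. This gives $T_G(K_q[x,y])\subseteq T_G(M_\ell(K))$.

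For the reverse inclusion, use the fact that $x^\ell,y^\ell$ are central and $K_q[x,y]$ is a free module over $Z_0=K[x^\ell,y^\ell]$ with basis $x^ry^s$, $0\le r,s<\ell$. Let $F=K(x^\ell,y^\ell)$. Then $K_q[x,y]$ embeds, as a graded algebra, into $K_q[x,y]\otimes_{Z_0}F$. This is an $\ell^2$-dimensional $G$-graded algebra over $F$ in which each homogeneous component is $F\cdot x^ry^s$ and each $x^ry^s$ is invertible. It is a twisted group algebra $F^\beta G$ with the same commutation factor $\beta$ as the Pauli grading on $M_\ell(F)$. Since both are twisted group algebras over $F$ with cohomologous cocycles (both are determined by the same bicharacter, as $q$ is primitive), they are graded-isomorphic: send $x\mapsto (x^\ell)^{1/\ell}B$ after adjoining $\ell$-th roots, i.e.\ over $\bar F$. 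Hence $T_G(K_q[x,y])\supseteq T_G(M_\ell(\bar F))=T_G(M_\ell(K))$, the last equality by the graded version of Proposition~\ref{stable-identities}. Alternatively, and more simply, apply the basis result directly: every homogeneous element of $K_q[x,y]$ of degree $\mu$ is $c\,x^ry^s$ with $c$ central, so the binomials $x_\mu x_\nu-\beta(\mu,\nu)x_\nu x_\mu$ hold in $K_q[x,y]$ by the computation $x^ry^s\cdot x^ty^w=q^{ts-rw}x^ty^w\cdot x^ry^s$, which follows from $yx=qxy$.

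With the binomial identities in hand, the argument ends as follows. Let $J$ be the $T_G$-ideal generated by the binomials; then $J\subseteq T_G(K_q[x,y])\subseteq T_G(M_\ell(K))$ by the graded quotient map. Known results for the Pauli grading give $T_G(M_\ell(K))=J$, and equality follows. To avoid citing that, I would prove $T_G(M_\ell(K))\subseteq J$ directly. Modulo $J$, every multihomogeneous polynomial reduces to a scalar multiple of a single ordered monomial in each multidegree. Substituting generic invertible homogeneous elements, say the basis matrices $B^rA^s$, gives a nonzero product since those matrices are invertible. So a multihomogeneous identity must have zero coefficient, i.e.\ it lies in $J$.

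The main obstacle is the precise sign or exponent convention in $\beta(\mu,\nu)=q^{ts-rw}$, which has to match the relation $yx=qxy$ and the chosen degrees. I would verify it on generators: $\mu=(1,0)$, $\nu=(0,1)$ should give $xy=q^{-1}yx$, and this fixes the convention.
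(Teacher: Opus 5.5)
Your proposal is correct, and its ``more simply'' route is exactly the paper's argument. The computation $x^ry^s\cdot x^ty^w=q^{ts-rw}\,x^ty^w\cdot x^ry^s$, together with homogeneous elements being central multiples of $x^ry^s$, puts the binomials in $T_G(K_q[x,y])$; the graded surjection $x\mapsto B$, $y\mapsto A$ gives $T_G(K_q[x,y])\subseteq T_G(M_\ell(K))$; and Bahturin--Drensky's Theorem~3.4 closes the loop, while your reordering-plus-invertible-substitution argument is a correct self-contained proof of that cited result for this division grading. One caveat: the kernel of $x\mapsto B$, $y\mapsto A$ is $(x^\ell-1,y^\ell-1)$, not $(x^\ell,y^\ell)$ as stated in Proposition~\ref{iso_qplane} (the quotient by the latter is local, not $M_\ell(K)$); this is harmless because you only use that the map is a graded surjection, and with that in hand your splitting-field route is unnecessary.
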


\begin{proof}
Direct and easy computation shows that $x^ry^s\cdot x^ty^w = q^{ts-rw} x^ty^w\cdot x^ry^s$. The remaining part of the proof follows from \cite[Theorem 3.4]{baht_dr}.  
\end{proof}
We observe that  \cite[Theorem 3.4]{baht_dr} proves a general result about gradings on matrix algebras by considering the above $\beta$ as an arbitrary skew-symmetric bicharacter. Further developments in this direction can be found in \cite{ckp}. 

Observe that the above corollary shows that, modulo its centre, $K_q[x,y]$ behaves similarly to the $\ell\times\ell$ matrices in case $q$ is a primitive $\ell$-th root of unity. One might ask then what the remaining group gradings on $K_q[x,y]$ are and what the corresponding graded identities are. The group gradings can be described by mimicking the approach from \cite{bsz}, considering the elementary and fine gradings, and then combining them. As for the graded identities the situation becomes "messier" since, in addition to the "usual" graded identities there may appear monomial ones. The latter are more elusive, and we leave the discussion for another occasion. 

\subsection{$U(\mathfrak{sl}_2)$ as a $\mathbb{Z}$-graded algebra}

\begin{proposition}\label{sl2}\cite[Example 1.2(3)]{bavula}
    Consider the Lie algebra $\mathfrak{sl}_2$ over a field of characteristic $0$ with its canonical basis elements $e$, $f$, $h$ with $[e,f]=h$, $[h,e]=2e$, $[h,f]=-2f$. Then the universal enveloping algebra $U(\mathfrak{sl}_2)$ is isomorphic to the GWA $D(a,\sigma)$, where $D=K[C,H]$, $a=C-H(H+1)$, and $\sigma(H)=H-1$, $\sigma(C)=C$.
\end{proposition}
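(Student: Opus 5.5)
We verify Bavula's isomorphism directly. Set $D=K[C,H]$, with $\sigma$ the $K$-automorphism of $D$ fixing $C$ and sending $H\mapsto H-1$; it has infinite order. We define a homomorphism $\Psi\colon D(a,\sigma)\to U(\mathfrak{sl}_2)$ and show it is bijective.

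\textbf{Definition of $\Psi$.} Normalize $H=h/2$, so that $[H,e]=e$ and $[H,f]=-f$. Take $C$ to be a suitably rescaled Casimir element, for instance $C=ef+H(H-1)$; any version with $ef-H(H-1)$, $fe+H(H+1)$ or similar rescalings is equally good, since everything depends only on central elements. Put
\[
\Psi(X^+)=f,\qquad \Psi(X^-)=e,\qquad \Psi(C)=C,\qquad \Psi(H)=H .
\]
The orientation (which of $e,f$ goes to $X^{\pm}$) is fixed by the sign convention $\sigma(H)=H-1$.

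\textbf{Checking the defining relations (\ref{eq:GWA-relations}).} There are three things to verify.
\begin{itemize}
\item[(i)] $fH=(H+1)f$ and $eH=(H-1)e$. These say $X^\pm$ twist $D$ by $\sigma^{\pm1}$ after matching orientation; adjust the assignment of $e$ and $f$ if needed.
\item[(ii)] $C$ is central, so it commutes with $X^\pm$, consistent with $\sigma(C)=C$.
\item[(iii)] $X^-X^+\mapsto ef$ and $X^+X^-\mapsto fe=ef-2H$. These must equal $a$ and $\sigma(a)$ respectively. With $C$ chosen so that $ef=C-H(H+1)$, the relation $fe=ef-h$ gives $fe=C-H(H+1)-2H$. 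This should be compared with $\sigma(a)=C-(H-1)H$.
\end{itemize}
This constant bookkeeping (the factor $2$ in $h$ versus $H$, and the shift sign) is the only genuinely fiddly point. I expect to fix it by taking $H=-h/2$ or by swapping $e$ and $f$, choosing whichever convention makes (i) and (iii) hold simultaneously. The statement is given up to isomorphism, so such a rescaling is legitimate.

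\textbf{Surjectivity.} $U(\mathfrak{sl}_2)$ is generated by $e$, $f$, $h$, and all three lie in the image of $\Psi$.

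\textbf{Injectivity.} I would use a basis comparison rather than the Gelfand–Kirillov dimension argument of Proposition \ref{quantum-plane}.
\begin{itemize}
\item The GWA is spanned by $D z^n$ for $n\in\mathbb{Z}$, so it is spanned by $C^iH^jz^n$.
\item In $U(\mathfrak{sl}_2)$, consider the grading by $\mathrm{ad}\,H$-eigenvalue. By PBW, $U(\mathfrak{sl}_2)$ has basis $f^ah^be^c$, and the degree-$0$ part $U_0$ is the polynomial ring $K[h,C]$, a classical fact.
\item Compatibility with the gradings shows that $\Psi$ maps $Dz^n$ onto $U_{\pm n}$. On each piece, $U_n=U_0f^n$ or $U_0e^n$ is a free $U_0$-module of rank one, again by PBW.
\item $D\to U_0$ is an isomorphism because $C$ and $H$ are algebraically independent in $U(\mathfrak{sl}_2)$. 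So each graded component of $\Psi$ is a bijection, and $\Psi$ is an isomorphism.
\end{itemize}
The main obstacle is proving $U_0=K[h,C]$ with $h$ and $C$ algebraically independent. I would handle this by a PBW leading-term argument: the leading term of $C^ih^j$ is $f^ie^ih^j$ up to scalar, and these leading terms are distinct PBW monomials.
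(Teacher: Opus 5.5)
Your route differs from the paper's and can be made to work, but the relation check that you call ``bookkeeping'' is wrong as written. It needs an actual fix, not a hedge.

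Once $\Psi(X^\pm)$ and $\Psi(H)$ are chosen, the relation $X^-X^+=a$ forces $\Psi(C)=\Psi(X^-)\Psi(X^+)+\Psi(H)\bigl(\Psi(H)+1\bigr)$. So there is no freedom in $C$, and this element must come out central.

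With your displayed assignment ($X^+\mapsto f$, $X^-\mapsto e$, $H\mapsto h/2$) it comes out as $ef+H(H+1)=\Omega+2H$, which is not central. Condition (i) also fails, since $fH=(H+1)f$ while $X^+H=(H-1)X^+$. In (iii) you silently replaced your $C=ef+H(H-1)$ by $ef+H(H+1)$. Your claim that ``any version is equally good'' is false: for example, $ef-H(H-1)$ is not central.

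The correct choice is $X^+\mapsto e$, $X^-\mapsto f$, $H\mapsto h/2$, and $C\mapsto ef+H(H-1)=fe+H(H+1)=\Omega$, which is your original $C$. Then:
\begin{itemize}
\item $eH=(H-1)e$ and $fH=(H+1)f$;
\item $fe=C-H(H+1)=a$;
\item $ef=fe+2H=C-(H-1)H=\sigma(a)$.
\end{itemize}
Your other option, $X^+\mapsto f$ with $H\mapsto -h/2$, also works; it differs from the first by the automorphism $e\leftrightarrow f$, $h\mapsto -h$ of $\mathfrak{sl}_2$.

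There are two smaller points in the injectivity step.
\begin{itemize}
\item For the PBW degree filtration, the symbol of $\Omega$ is $fe+\tfrac14h^2$, so ``the leading term of $C^ih^j$ is $f^ie^ih^j$'' needs an ordering that compares $e$-degree first. With that ordering, the leading PBW monomial is $f^ih^je^i$ with coefficient $1$. Alternatively, argue with symbols in $S(\mathfrak{sl}_2)$.
\item For injectivity you only need that $h$ and $\Omega$ are algebraically independent and that $U(\mathfrak{sl}_2)$ is a domain. If $\Psi(\sum_n d_nz^n)=0$, weight considerations force each $\Psi(d_n)\Psi(z^n)=0$, hence $d_n=0$. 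So $U_0=K[h,\Omega]$ is not needed, since surjectivity is already in hand.
\end{itemize}

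Compared with the paper: the paper maps $U(\mathfrak{sl}_2)\to D(a,\sigma)$ by $e\mapsto X^+$, $f\mapsto X^-$, $h/2\mapsto H$, and gets surjectivity because $\Omega\mapsto C$. It proves injectivity by Gelfand--Kirillov dimension: the GWA has GK dimension at least $3$ (cited), while any proper quotient of the domain $U(\mathfrak{sl}_2)$ has GK dimension at most $2$.

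Your direction makes surjectivity trivial and replaces the GK-dimension inputs with PBW and the $\operatorname{ad}h$-weight decomposition. That is more elementary and self-contained. It also yields directly the decomposition $U(\mathfrak{sl}_2)=\bigoplus_n K[\Omega,h]z^n$, with $K[\Omega,h]$ as the degree-$0$ component, which the paper uses afterwards. The paper's argument is shorter, but it rests on the cited facts about GK dimension of GWAs and of quotients by essential ideals.
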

\begin{proof}
    Set $h'=(1/2) h$. Then $U(\mathfrak{sl}_2)$ is the algebra with generators $h', e, f$, and relations $[e,f]=2h'$, $[h',e]=e$, $[h',f]=-f$. We have a map
    \[
    \phi: K \langle h', e, f \rangle \rightarrow K[C,H](C-H(H+1), \sigma)
    \]
    given by $ h' \mapsto H$, $e \mapsto X^+$, $f \mapsto X^-$, $[H, X^+]= H X^+ - (H-1) X^+= X^+$; and similarly, $[H, X^-]=-X^-$, $[X^+, X^-]= C-(H-1)H - (C -H(H+1))=H(H+1)-H(H-1)=2H$. 
    
    Hence we have a map $\widehat{\phi}: U(\mathfrak{sl}_2) \rightarrow K[C,H](C-H(H+1)), \sigma)$ that is surjective since the Casimir element $\Omega=fe+\displaystyle\frac{1}{4}h(h+2)$ of $U(\mathfrak{sl}_2)$ is mapped onto $C$. Let us now show that $\widehat{\phi}$ is injective. But $\operatorname{GKdim} K[C,H](C-H(H+1)), \sigma) $ is at least $3$ because the base ring $K[C,H]$ has Gelfand-Kirillov dimension 2 according to \cite[Lemma 3.1]{zmz}. If $\widehat{\phi}$ had a non-zero ideal $I$ as kernel, we would have 
    \[
    U(\mathfrak{sl}_2) / I \simeq K[C,H](C-H(H+1)), \sigma).
    \]
On the other hand, by \cite[8.3.2(v), 8.3.6(i)]{mcconnell}, $\operatorname{GKdim} \, U(\mathfrak{sl}_2)/I \leq 2$, as every ideal in a domain is essential. This is impossible. Hence $\operatorname{ker} \widehat{\phi}=0$ and we have our desired isomorphism.
\end{proof}

In more concrete terms, we have a $\mathbb{Z}$-grading of $U(\mathfrak{sl}_2)$ as follows: $U(\mathfrak{sl}_2)=\bigoplus_{n \in \mathbb{Z}} K[\Omega, h] z^n$, where $z^n=e^n$ if $n \geq 0$ and $z^n=f^{-n}$ if $n<0$. The subalgebra corresponding to the degree $0$ is the polynomial algebra $K[\Omega, h]$ generated by $h$ and the Casimir element $\Omega$. Incidentally, we note that it is a maximal commutative subalgebra of $U(\mathfrak{sl}_2)$ with respect to inclusion, the so called \textit{Gelfand--Tsetlin subalgebra} \cite{zhelobenko}.

\begin{theorem}
\label{u(L)isnotpi}
    Let $\mathfrak{g}$ be a Lie algebra over a field $K$ of characteristic $0$. Then $U(\mathfrak{g})$ is a (ungraded) PI-algebra if and only if $\mathfrak{g}$ is abelian.
\end{theorem}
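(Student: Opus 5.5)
The easy direction comes first. If $\mathfrak{g}$ is abelian, then $U(\mathfrak{g})$ is the symmetric algebra $S(\mathfrak{g})$, a commutative polynomial algebra. It therefore satisfies $[x_1,x_2]=0$ and is PI.

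For the converse, suppose $\mathfrak{g}$ is not abelian. The plan is to find inside $U(\mathfrak{g})$, or inside a homomorphic image of one of its subalgebras, an algebra already known to be non-PI. Since being PI passes to subalgebras and to homomorphic images, that will be enough. Pick $a,b\in\mathfrak{g}$ with $c=[a,b]\neq 0$ and look at the Lie subalgebra $\mathfrak{h}$ they generate. By PBW, $U(\mathfrak{h})$ embeds in $U(\mathfrak{g})$, so it suffices to show that $U(\mathfrak{h})$ is not PI. We may also extend scalars to $\bar K$, using Proposition~\ref{stable-identities} in its ungraded form: identities with coefficients in $K$ survive extension of scalars.

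The key case split is on the two-dimensional structure. \emph{Case 1:} $c\notin\operatorname{span}(a,b)$ fails, i.e.\ $c=\alpha a+\beta b$. Then $\mathfrak{h}$ is the two-dimensional nonabelian Lie algebra, with basis $u,v$ and $[u,v]=v$. In $U(\mathfrak{h})$ we have $uv=v(u+1)$, so $U(\mathfrak{h})$ is the GWA-like skew polynomial ring $K[u][v;\sigma]$ with $\sigma(u)=u+1$. Localizing at the Ore set $\{v^n\}$ gives $K[u]*\mathbb{Z}$, which sits inside $\mathcal{S}_1$ after the substitution $h=-u$. Moreover $W_1$, via $u\mapsto -yx$ and $v\mapsto x$, is a quotient-free comparison. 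The cleanest route is to note that $U(\mathfrak{h})$ contains $K\langle u,v\rangle$ with $\mathbb{Z}$-grading by $v$-degree, which makes $U(\mathfrak{h})$ a $\mathbb{Z}$-graded Galois $K[u]$-ring in $\mathcal{S}_1$: it is the GWA-type subring $\bigoplus_{n\ge0}K[u]v^n$, and $FU=F*\mathbb{Z}$ fails only in negative degrees. To avoid this subtlety, I would pass to the localization $U(\mathfrak{h})[v^{-1}]$. It is PI iff $U(\mathfrak{h})$ is, because a central-free Ore localization of a PI domain is PI, via Posner/Rowen: a prime PI ring has a PI classical ring of quotients. That localization is a $\mathbb{Z}$-graded Galois ring in $\mathcal{S}_1$, so by Theorem~\ref{identities-Galois-rings} and Corollary~\ref{identities-S1} it is not PI. \emph{Case 2:} $a,b,c$ are linearly independent. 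Then I look for a nonabelian two-dimensional subalgebra or for an $\mathfrak{sl}_2$. Rather than classify, I would use a Heisenberg-type argument. If some element acts non-nilpotently, i.e.\ $\operatorname{ad}$ has a nonzero eigenvalue on $\mathfrak{h}$ over $\bar K$, we get $[u,v]=\lambda v$ with $v\ne0$ and reduce to Case 1. Otherwise, by Engel, $\mathfrak{h}$ is nilpotent and nonabelian. It then has elements $a,b$ with $c=[a,b]\neq0$ central in $\mathfrak{h}$: take $c$ in the last nonzero term of the lower central series. The quotient $U(\mathfrak{h})/(c-1)$ is nonzero by PBW and contains the image of $a,b$ with $[b,a]=1$, hence receives a homomorphism from $W_1$. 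Since $W_1$ is simple this map is injective, so $W_1$ embeds in a quotient of $U(\mathfrak{h})$ and $U(\mathfrak{h})$ is not PI, by Corollary~\ref{basischar0} together with Lemma~\ref{folklore}, or directly.

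The main obstacle is the justification in Case 1 that $U(\mathfrak{h})$ (or its localization) is non-PI. The simplest fix avoids Galois rings altogether. In $U(\mathfrak{h})[v^{-1}]$ set $x=v$ and $y=-uv^{-1}$. Then $yx-xy=-u+vuv^{-1}=-u+(u-1)=-1$, so after a sign $W_1$ maps into the localization, injectively by simplicity. A PI domain has a PI Ore localization (Posner's theorem). Hence if $U(\mathfrak{h})$ were PI, $W_1$ would be PI, a contradiction. I would check this sign computation carefully, since it is the crux of the argument.
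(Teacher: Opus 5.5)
Your case analysis is correct when the Lie subalgebra $\mathfrak{h}$ generated by $a,b$ is finite-dimensional. But the statement is for an arbitrary Lie algebra $\mathfrak{g}$, and $\mathfrak{h}$ can be infinite-dimensional. Case~2 silently uses finite-dimensionality twice: once to say that a non-nilpotent $\operatorname{ad}u$ has a nonzero eigenvalue, and once for Engel's theorem. Both fail in general.

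Take $\mathfrak{g}$ to be the free Lie algebra on $a,b$, or the positive Witt algebra $\operatorname{span}\{e_i : i\ge 1\}$ with $[e_i,e_j]=(j-i)e_{i+j}$, generated by $e_1,e_2$. Comparing lowest-degree components in the positive grading shows that $[u,v]=\lambda v$ with $\lambda\neq0$ forces $v=0$. Yet $\mathfrak{g}$ is not nilpotent; indeed $\operatorname{ad}a$ is not even nilpotent. So the first branch never occurs, and the conclusion of the second branch is false. By Shirshov--Witt, the free Lie algebra does not even contain a nonabelian finite-dimensional subalgebra.

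One way to close the gap is to use the PI hypothesis before the case split:
\begin{itemize}
\item If $U(\mathfrak{g})$ were PI, then $U(\mathfrak{h})$ would be an affine PI algebra. By Shirshov's height theorem (Berele) it would then have finite Gelfand--Kirillov dimension.
\item Suppose $\mathfrak{h}$ were infinite-dimensional. Take any linearly independent $h_1,\dots,h_m\in\mathfrak{h}$; they all lie in $V^N$ for some $N$, where $V=\operatorname{span}(1,a,b)$. The PBW monomials $h_1^{k_1}\cdots h_m^{k_m}$ with $\sum k_i\le t$ are linearly independent elements of $V^{Nt}$, so $\operatorname{GKdim}U(\mathfrak{h})\ge m$ for every $m$.
\end{itemize}
Hence $\mathfrak{h}$ is finite-dimensional, and your argument then applies.

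Some smaller points.
\begin{itemize}
\item In Case~1, drop the side remarks. $U(\mathfrak{h})$ does not contain $K\langle u,v\rangle$; it is a quotient of it, and a domain of GK dimension $2$ contains no free subalgebra of rank $2$.
\item Also in Case~1, the map into $W_1$ should be $u\mapsto yx$, $v\mapsto x$, not $u\mapsto -yx$. Even corrected, it embeds $U(\mathfrak{h})$ into $W_1$, which is the wrong direction for showing $U(\mathfrak{h})$ is not PI.
\item Your final Case~1 route is the right one once the sign is fixed. With $x=v$ and $y=uv^{-1}$ you get $yx-xy=u-vuv^{-1}=u-(u-1)=1$. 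By Posner--Rowen, localizing a prime PI ring at a denominator set of regular elements gives a subring of its central simple classical quotient ring, hence a PI ring. That makes the detour through Theorem~\ref{identities-Galois-rings} unnecessary.
\item In Case~2, sending $c\mapsto 1$ gives $ab-ba=1$. So $a$ plays the role of $y$ and $b$ that of $x$, not the other way round.
\end{itemize}

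With the finite-dimensionality step added, your proof is a self-contained reduction to the non-PI algebra $W_1$ through two model subquotients: the two-dimensional nonabelian algebra and the Heisenberg algebra. The paper instead simply cites Bahturin's book for the general statement.
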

\begin{proof}
See for example, \cite[Theorem 6.50, p. 228]{bahturinbook}. Recall that in the latter reference the case of positive characteristic was also dealt with, in Theorem 6.52. 
\end{proof}

\begin{theorem}
\label{mainthmsl2}
    The ideal of $\mathbb{Z}$-graded polynomial identities of $A=U(\mathfrak{sl}_2)$ is generated by the single identity $[x_1^0,x_2^0]=0$.
\end{theorem}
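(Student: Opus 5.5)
By Proposition \ref{sl2}, $A=U(\mathfrak{sl}_2)$ is, as a $\mathbb{Z}$-graded algebra, the degree $1$ GWA $D(a,\sigma)$ with $D=K[C,H]$, $a=C-H(H+1)$, $\sigma(H)=H-1$ and $\sigma(C)=C$. The plan is to show $T_{\mathbb{Z}}(A)=T_{\mathbb{Z}}(W_1)$ and then invoke Corollary \ref{basischar0}.

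The inclusion $[x_1^0,x_2^0]\in T_{\mathbb{Z}}(A)$ is immediate, since $A_0=K[\Omega,h]$ is commutative. For the reverse inclusion, it suffices by Lemma \ref{multihomgraded} to treat a multilinear graded identity $f$ of $A$ and show $f\in I$. There are two ways to do this.

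\textbf{Route 1: Galois rings.} Since $D$ is an affine commutative domain and $\sigma$ has infinite order, Proposition \ref{GWA-Galois-rings} shows that $A$ is a $\mathbb{Z}$-graded Galois ring in $F*\mathbb{Z}$ with $F=K(C,H)$. Theorem \ref{identities-Galois-rings} then gives $T_{\mathbb{Z}}(A)=T_{\mathbb{Z}}(K(C,H)*\mathbb{Z})$. Now write $K(C,H)*\mathbb{Z}=\mathcal{S}_1\otimes_{K}K(C)$ over $K(C)$, where $\mathcal{S}_1$ is taken over the field $K(C)$. Note that $\sigma$ fixes $C$ and shifts $H$ by $-1$, exactly as in Definition \ref{def-S1}. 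Applying Corollary \ref{identities-S1} over $K(C)$, together with Proposition \ref{stable-identities} to compare identities with coefficients in $K$, yields $T_{\mathbb{Z}}(A)=T_{\mathbb{Z}}(W_1)$.

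\textbf{Route 2: direct argument.} Alternatively, one can rerun the argument of Theorem \ref{mainthm}. Substitute $x_i^{a_i}\mapsto p_i(C,H)z^{a_i}$. Lemma \ref{canonicalform} then holds verbatim, with the shifts $H\mapsto H-a$ and $C$ untouched. Choosing the $p_i$ to depend only on $H$ reduces the problem to exactly the polynomial computation already carried out for $W_1$, and Proposition \ref{balcom} together with Corollary \ref{maincor} then separates the non-equivalent monomials.

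\textbf{Main obstacle.} The delicate step is the base-field bookkeeping. In Route 1 one must justify identifying $K(C,H)*\mathbb{Z}$ with a scalar extension of $\mathcal{S}_1$ and check that the graded identities do not change under $K\subset K(C)$; Proposition \ref{stable-identities} handles this. If that step proves awkward, Route 2 avoids it entirely: it shows directly that any multilinear identity of $A$ is an identity after specializing the coefficients to $K[H]$, and for such specializations Theorem \ref{mainthm}'s argument applies.
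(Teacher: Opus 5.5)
There is a genuine gap, and it cannot be repaired, because the statement itself is false. Both of your routes break at the point where they fall back on Section~\ref{sec:graded-identities-weyl-char-0}, and so does the paper's own proof (central quotient $A/(\Omega-\lambda)$ plus Corollary~\ref{identities-classical-gwa}).

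Consider $g=x_1^{1}x_2^{-1}x_3^{1}-x_3^{1}x_2^{-1}x_1^{1}$. The homogeneous elements of degree $\pm1$ in $A$ are $p\,e$ and $p\,f$ with $p\in K[\Omega,h]$. Since $e\,p(\Omega,h)=p(\Omega,h-2)\,e$ and $ef\in K[\Omega,h]$,
\[
p_1e\cdot p_2f\cdot p_3e=p_1\,p_2(\Omega,h-2)\,(ef)\,p_3\,e ,
\]
and this is symmetric in $p_1,p_3$ because $K[\Omega,h]$ is commutative. Hence $g\in T_{\mathbb{Z}}(A)$. Likewise $g\in T_{\mathbb{Z}}(W_1)$; for example $x\cdot y\cdot hx=(h-1)hx=hx\cdot y\cdot x$.

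But $g\notin I$. The graded T-ideal $I$ is spanned by the elements $w_1[m,m']w_2$ with $w_1,w_2$ monomials and $m,m'$ nonempty monomials of degree $0$. Three variables of degrees $1,-1,1$ cannot be split into two disjoint nonempty groups of degree $0$. So the multilinear component of $I$ in $x_1^{1},x_2^{-1},x_3^{1}$ is zero. The same argument applies to $x_1^{\alpha}x_2^{-\alpha}x_3^{\alpha}-x_3^{\alpha}x_2^{-\alpha}x_1^{\alpha}$ for every $\alpha\neq0$; this is the $\mathbb{Z}$-analogue of the second family in (\ref{basicidcharp}).

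In Route~2, the step that fails is the direction of Proposition~\ref{balcom} you use to separate classes (equal evaluations imply $u\sim v$). The monomials $x_1x_2x_3$ and $x_3x_2x_1$ give identical shift data: $p_1,p_3$ are unshifted and $p_2$ is shifted by $1$. Yet no balanced commutation relates them, since the only degree-$0$ subwords $x_1x_2$ and $x_2x_3$ overlap. The closing claim of that proof (``in case there is no such submonomial \dots\ we cannot obtain the same shifts'') is exactly what is false.

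Your claim that Lemma~\ref{canonicalform} ``holds verbatim'' is also inaccurate, even for $W_1$. Since $z^1z^{-1}=xy=h-1\neq 1$, each monomial carries an extra nonzero factor depending on its degree sequence, and in $A$ that factor involves $C$. This is not where the argument dies, but it is not the $W_1$ computation. In Route~1, the failure is simply the final appeal to Corollary~\ref{basischar0}.

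What Route~1 does establish is $T_{\mathbb{Z}}(A)=T_{\mathbb{Z}}(W_1)$, and this is more than the paper's argument gives. Passing to $A(\lambda)$ only yields $T_{\mathbb{Z}}(A)\subseteq T_{\mathbb{Z}}(A(\lambda))=T_{\mathbb{Z}}(W_1)$. The reverse inclusion there comes solely from the false equality $T_{\mathbb{Z}}(W_1)=I$. Embedding $A$ itself as a Galois ring in $K(C,H)*\mathbb{Z}$ gives both inclusions at once. Two corrections are needed for that step:

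\begin{enumerate}
\item $K(C,H)*\mathbb{Z}$ is not $\mathcal{S}_1\otimes_K K(C)$: the degree-$0$ part of the latter, $K(H)\otimes_K K(C)$, is not a field. It is $\mathcal{S}_1$ constructed over the base field $K(C)$, and your comparison of $K$- and $K(C)$-identities should be phrased that way.
\item Theorem~\ref{identities-Galois-rings} is true in characteristic $0$, but its printed proof does not work as written. $F$ is not central in $F*\mathbb{Z}$, so multiplication $U\otimes_K F\to F*\mathbb{Z}$ is not an algebra homomorphism. A correct proof evaluates a multilinear identity of $U$ on $\gamma_iu_i$, with $0\neq u_i\in U_{a_i}$ and $\gamma_i\in\Gamma$. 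Dedekind independence of the distinct characters $\gamma_1\otimes\cdots\otimes\gamma_n\mapsto\prod_i\theta^{t_i}(\gamma_i)$ of $\Gamma^{\otimes n}$ then shows that the coefficients attached to each shift vector $(t_1,\dots,t_n)$ sum to zero, which is exactly vanishing on $F*\mathbb{Z}$.
\end{enumerate}

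So the correct conclusion of your argument is $T_{\mathbb{Z}}(U(\mathfrak{sl}_2))=T_{\mathbb{Z}}(W_1)$, which is still enough to show that $U(\mathfrak{sl}_2)$ is not PI. Here $T_{\mathbb{Z}}(W_1)$ is strictly larger than the graded T-ideal generated by $[x_1^0,x_2^0]$.
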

\begin{proof}
    We clearly have $[x_1^0,x_2^0] \in T_{\mathbb{Z}}(A)$. Consider any nonzero scalar $\lambda \in K^\times$. Then the algebra $A(\lambda)=A/(\Omega-\lambda)$ is a degree 1 generalized Weyl algebra of classical type, $A(\lambda)\simeq K[H](a=\lambda-H(H+1), \sigma)$, $\sigma(H)=H-1$, see \cite[Example 3]{bavula}. Since the quotient map respects the $\mathbb{Z}$-grading, clearly $T_{\mathbb{Z}}(A) \subset T_{\mathbb{Z}}(A(\lambda))$ for every $\lambda \in K^\times$. On the other hand, by Corollary \ref{identities-classical-gwa}, $T_{\mathbb{Z}}(A(\lambda))$ is generated by the single identity $[x_1^0,x_2^0]=0$. Hence $T_{\mathbb{Z}}(U(\mathfrak{sl}_2))$ is generated by the single identity $[x_1^0,x_2^0]=0$.
\end{proof}

We observe that Theorem \ref{mainthmsl2} can be used to give an independent proof of Theorem \ref{u(L)isnotpi} when $\mathfrak{g}=\mathfrak{sl}_2$ in characteristic 0. Observe that $T_{\mathbb{Z}}(U(\mathfrak{sl}_2))=T_{\mathbb{Z}}(W_1)$ implies $T(U\mathfrak{sl}_2)=T(W_1)$ and the latter T-ideal equals 0.

\end{document}